\theoremstyle{plain}
\numberwithin{equation}{section}
\newtheorem{thm}{Theorem}[section]
\newtheorem{Nott}[thm]{Notation}
\newtheorem{Cor}[thm]{Corollary}
\newtheorem{Prop}[thm]{Proposition}
\newtheorem{Def}[thm]{Definition}
\newtheorem{Lm}[thm]{Lemma}
\newtheorem{Rem}[thm]{Remark}
\def\abs#1{\left|#1\right|}
\newcommand{\build}{\mathfrak{B}_\infty}
\newcommand{\ratbuild}{\mathfrak{B}_1}
\newcommand{\maap}{A_M}
\newcommand{\StGitF}{\mathfrak{o}_F \oplus \mathfrak{o}_F}
\newcommand{\tStGitF}{\mathfrak{o}_F \oplus t\mathfrak{o}_F}
\newcommand\inv{\mathop{ \rm inv}}
\newcommand\dett{\mathop{ \rm det}}
\newcommand\Specc{\mathop{ \rm Spec}}
\newcommand\Stabb{\mathop{ \rm Stab}}
\newcommand\dimm{\mathop{ \rm dim}}
\newcommand\Indd{\mathop{ \rm Ind}}
\newcommand\minn{\mathop{ \rm min}}
\newcommand\cIndd{\mathop{ \rm c-Ind}}
\newcommand\inff{\mathop{ \rm inf}}
\newcommand\Stt{\mathop{ \rm St}}
\newcommand\modd{\mathop{ \rm mod}}
\newcommand\redd{\mathop{ \rm red}}
\newcommand\cool{\overline{\mathbb{Q}_l}}
\newcommand\Gall{\mathop{ \rm Gal}(\bar{k}/k)}
\newcommand\Homm{\mathop{ \rm Hom}}
\newcommand\Nrd{\mathop{ \rm Nrd}}
\newcommand\Kerr{\mathop{ \rm Ker}}
\begin{document} 


\title[The cohomology of affine Deligne-Lusztig Varieties]{The cohomology of the affine Deligne-Lusztig varieties in the affine flag manifold of $GL_2$}
\author{Alexander Ivanov} 

\maketitle

\section{Introduction}

Let $k$ be a field with $q$ elements, and let $\bar{k}$ be an algebraic closure. Let further $F = k((t))$, $L = \bar{k}((t))$ and $\sigma$ the Frobenius morphism 
of $L/F$. Thus $\sigma(\sum a_n t^n) = \sum \sigma(a_n)t^n$.  Denote the valuation on $L$ by $v_L$ and the ring of integers 
in $F$ by $\mathfrak{o}_F = k[[t]]$. Let $G$ be a split connected reductive group over $k$, 
and let $T$ be a split maximal torus in $G$. Let $\tilde{W}$ be the corresponding affine Weyl group, and let $I \subset G(L)$ be 
an Iwahori subgroup corresponding to an alcove in the apartment of the Bruhat-Tits building of G, associated to $T$. Then a Bruhat decomposition of 
$G(L)$ is given by $G(L) = \bigcup_{w \in \tilde{W}} IwI$.

In \cite{Ra}, for every $b \in G(L)$ and $w \in \tilde{W}$, the affine Deligne-Lusztig variety inside the affine flag manifold $G(L)/I$ of $G$ 
is defined by $$X_w(b) = \{ xI \in G(L)/I \colon x^{-1}b\sigma(x) \in IwI\}.$$ 
It is a locally closed subset of $G(L)/I$, and one provides it with the reduced induced sub-Ind-scheme structure. In fact, it is 
a scheme locally of finite type, defined over $k$. Up to isomorphism, it depends only on the $\sigma$-conjugacy class 
$\{g^{-1}b \sigma(g) \colon g \in G(L)\}$ of $b$:
if $b^{\prime} = g^{-1}b\sigma(g)$, then $x \mapsto g^{-1}x$ is an isomorphism $X_w(b) \rightarrow X_w(b^{\prime})$.
The subgroup $J_b = \{g \in G(L) \colon g^{-1} b \sigma(b) = b\}$ of $G(L)$ acts by left multiplication on $X_w(b)$. If $b, c \in G(L)$ are 
$\sigma$-conjugate, then the groups $J_b$ and $J_c$ are conjugate. The action of $J_b$ on $X_w(b)$ induces an action of $J_b$ 
on the $l$-adic cohomology of $X_w(b)$ with compact supports. 

The aim of this work is to compute the induced representations of $J_b$ on these cohomology groups for $G = GL_2$. Consider 
the diagonal torus $T \subset GL_2$,  the standard Iwahori subgroup $I \subset GL_2(L)$ and the natural embedding $SL_2 \subseteq GL_2$. Let $W_a$ 
denote the affine Weyl group of $SL_2(L)$ corresponding to the torus $T^{SL_2} = T \cap SL_2$. The extended affine Weyl group $\tilde{W}$ of $GL_2(L)$ is 
given by a split extension $\tilde{W} = W_a \rtimes \mathbb{Z}$. We choose a splitting by sending $v \in \mathbb{Z}$ 
to $\begin{pmatrix} 0 & 1 \\ t & 0\end{pmatrix}^v$. 

Now fix a $b \in GL_2(L)$ and $w \in W_a$. Since the valuation of the determinant of all matrices in $I$ is zero, $X_{(w,v)}(b) \neq \emptyset$ implies 
$v = v_L (\dett (b))$. We write $X_w(b) := X_{(w,v_L \circ \dett (b))}(b)$. As already mentioned, $X_w(b)$ depends up to isomorphy only 
on the $\sigma$-conjugacy class of $b$. Further if $c$ is in the center of $GL_2(L)$,  then we have $X_w(b) = X_w(cb)$ for all $w \in W_a$ 
(Lemma \ref{lemma3}). All in all, we have only three essentially different cases for $b$  in which we determine $X_w(b)$ explicitly 
together with the action of $J_b$. The three cases are presented in Table \ref{table:adlv}.
In each of these cases, for all $w \in W_a$ such that $X_w(b) \neq \emptyset$ (except for some special values of $w$, see below), we have 
the $J_b$-equivariant isomorphisms:

\begin{equation}\label{imtro1} X_w(b) \cong \coprod_{J_b/K_b^{(m)}} S, \end{equation}
where $S$ is a smooth connected variety whose dimension depends on $\ell(w)$ and $b$, and $K^{(m)}_b$ is a subgroup of $J_b$, 
where $m \in \{0,1\}$ depends on $w$. The group $J_b$ acts by permuting the connected components and $K^{(m)}_b$ is the stabilizer of one of them.
All these objects for the three different cases are collected in Table \ref{table:adlv}.

We set $b_1 := \begin{pmatrix} 0 & 1 \\ t & 0 \end{pmatrix}$. For $m \in \{0,1\}$ we write $g_m := \begin{pmatrix} 1 & 0 \\ 0 & t^{m} \end{pmatrix}$. \
Further, let $D$ denote the quaternion division algebra over $F$, containing the unramified extension of $F$ of degree two, and $U_D$ the unit subgroup of its valuation ring. 

\begin{table}[ht]
\caption{Affine Deligne-Lusztig Varieties for $GL_2$}
\centering
\begin{tabular}{|c|c|c|c|c|}
\hline
$b$ & $X_w(b) \neq \emptyset$ if & $J_b$ & $K^{(m)}_b$ & $S$ \\ [0.5ex]
\hline \hline

$1$ & $\ell(w) > 0$ odd & $GL_2(F)$ & $g_m GL_2(\mathfrak{o}_F) g_m^{-1}$ & $\mathbb{A}^{\frac{\ell(w)-1}{2}} \times (\mathbb{P}^1 - \mathbb{P}^1(k))$ \\ 
\hline

$\begin{pmatrix} 1 & 0 \\ 0 & t^{\alpha} \end{pmatrix}, \alpha > 0$ & $\ell(w) - \alpha > 0$ odd & $T(F)$ & $T(\mathfrak{o}_F)$  & $\mathbb{A}^{\frac{\ell(w)-\alpha-1}{2}} \times (\mathbb{P}^1 - \{0,\infty\})$ \\ 
\hline

$b_1$ & $\ell(w)$ even & $D^{\times}$ & $U_D$ & $\mathbb{A}^{\frac{\ell(w)}{2}}$ \\
\hline

\end{tabular}
\label{table:adlv}
\end{table}

\noindent The special cases excluded in this table are:
\begin{itemize}
\item[(i)] $b = 1$, $w=1$; 
\item[(ii)] $b = \begin{pmatrix} t^\alpha & 0 \\ 0 & 1 \end{pmatrix}, \alpha > 0$, $w$ such that $\ell(w) = \alpha$.
\end{itemize}
In all these special cases the variety $X_w(b)$ is a disjoint union of points. 
The precise statements are Propositions \ref{Propb1Var}, \ref{propbdvar}, and \ref{PropbnVar}.

Now we turn to the cohomology. Therefore, we make a base change to $\bar{k}$ and denote $X_w(b) \times_{\Specc k} \Specc \bar{k}$ by $\overline{X}_w(b)$. 
The schemes $\overline{X}_w(b)$ are locally of finite type, but not of finite type. In our case the $\overline{X}_w(b)$ are disjoint unions 
of schemes of finite type. Therefore, we take the cohomology with compact supports which commutes with colimits. Thus the cohomology groups 
of $\overline{X}_w(b)$ are direct sums of the cohomology groups of the connected components. The groups $J_b$ and $\Gamma := \Gall$ 
act on $H^{\ast}_c(\overline{X}_w(b),\cool)$ in a natural way. The group $J_b$ is in every case locally profinite and 
the representations $H^{\ast}_c(\overline{X}_w(b), \cool)$ are smooth. If we consider in the following a 
representation of a locally profinite group, then we mean a smooth representation. 

Consider first the case $b = 1$ and let $1 \neq w \in W_a$ such that $X_w(1) \neq \emptyset$. Put $G := J_1 = GL_2(F)$ and $K := K^{(0)}_1 = GL_2(\mathfrak{o}_F)$. 
Assume for simplicity that  $m \in \{0,1\}$ corresponding to this $w$ equals $0$ (compare (\ref{imtro1})). Consider the representation 
$$\overline{\Stt} = \inff\nolimits_{GL_2(k)}^K \Stt\nolimits_{GL_2(k)}$$
where $\Stt_{GL_2(k)}$ denotes the Steinberg representation of $GL_2(k)$ and we inflate with respect to the natural projection 
$K = GL_2(\mathfrak{o}_F) \twoheadrightarrow GL_2(k)$. We  prove (Proposition \ref{hehe}) that there are the following isomorphisms of $G \times \Gamma$-modules:

$$H^r_c(\overline{X}_w(1), \cool) \cong 
\begin{cases} \cIndd\nolimits^G_K \overline{\Stt}(\frac{\ell(w)-1}{2}) & \text{ if } r = \ell(w), \\ 
\cIndd\nolimits^G_K 1_{\cool}(\frac{\ell(w)-3}{2}) & \text{ if } r = \ell(w) + 1,\\ 
0 & \text{ else.}
\end{cases}$$

\noindent The idea of the proof is to reduce the cohomology of $X_w(1)$ to the cohomology of the Drinfeld upper halfplane 
$\Omega^2_k = \mathbb{P}^1 - \mathbb{P}^1(k)$ and the action of $K^{(m)}_1$ to the action of the finite group $GL_2(k)$ on $\Omega^2_k$.

To investigate these representations, we consider the $\Gamma$-modules $\Homm_G (H^{\ast}_c(\overline{X}_w(1), \cool), \pi)$ 
where $\pi$ ranges over smooth irreducible representations of $G$. As shown in \cite{BH}, there are two types of such representations: 
the cuspidal and the noncuspidal ones. First of all, there are no nonzero morphisms into cuspidal representations. The
noncuspidal (irreducible) representations are of one of the following types: $\Indd^G_B \chi$, where $\chi$ ranges over all character of the diagonal torus 
(with some nonrelevant exceptions) and $B$ is the Borel subgroup of upper triangular matrices; $\phi_G = \phi \circ \dett$ and $\phi \cdot \Stt_G$ 
where $\phi$ ranges over all characters of $F^{\times}$. A character of $T(F)$ resp. of $F^{\times}$ is said to be \textit{ unramified}, 
if it is trivial on $T(\mathfrak{o}_F)$ resp. on $\mathfrak{o}_F^{\times}$.

\begin{thm} Let $1 \neq w \in W_a$ such that $X_w(1) \neq \emptyset$.
\begin{itemize}
\item[(i)] Let $\chi$ be a character of $T(F)$. Then
$$\Homm\nolimits_G(H^{\ell(w)+1}_c(\overline{X}_w(1), \cool), \Indd\nolimits^{G}_{B}\chi) = \begin{cases} \cool(\frac{3-\ell(w)}{2}) & \text{ if } \chi 
\text{ unramified, }\\ 0 & \text{ else. }\end{cases}$$

\item[(ii)] Let $\phi$ be a character of $F^{\times}$. Then 
$$\Homm\nolimits_G(H^{\ell(w)+1}_c(\overline{X}_w(1), \cool), \phi_G) = \begin{cases} \cool(\frac{3-\ell(w)}{2}) & \text{ if } \phi \text{ unramified, }\\ 0 
& \text{ else }\end{cases}$$

\item[(iii)] Let $\phi$ be a character of $F^{\times}$. Then $\Homm\nolimits_G(H^{\ell(w)+1}_c(\overline{X}_w(1), \cool), \phi \cdot \Stt_G) = 0.$
\item[(iv)] Let $\pi$ be a cuspidal representation of $G$. Then $\Homm\nolimits_G(H^{\ell(w)+1}_c(\overline{X}_w(1), \cool), \pi) = 0.$
\end{itemize}
\end{thm}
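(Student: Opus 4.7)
The starting point is Proposition \ref{hehe} recalled just above the theorem, which identifies
$$H^{\ell(w)+1}_c(\overline{X}_w(1), \cool) \cong \cIndd\nolimits^G_K 1_{\cool}\bigl(\tfrac{\ell(w)-3}{2}\bigr)$$
as $G \times \Gamma$-modules, with $G = GL_2(F)$ and $K = GL_2(\mathfrak{o}_F)$. Since $K$ is compact open in $G$, Frobenius reciprocity for compact induction gives
$\Homm_G(\cIndd^G_K 1_{\cool}, \pi) = \pi^K$
for any smooth representation $\pi$ of $G$; the Tate twist on the source turns into a factor of $\cool(\tfrac{3-\ell(w)}{2})$ on the Hom, which accounts for the Galois twist in the statement. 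Thus the theorem reduces, in every case, to computing the $K$-spherical vectors in the relevant irreducible $\pi$.

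Parts (i) and (ii) are direct applications of the Iwasawa decomposition. For (i), $G = BK$ gives $(\Indd^G_B \chi)|_K = \Indd^K_{B \cap K}(\chi|_{B \cap K})$, and Frobenius reciprocity for the compact group $K$ identifies the $K$-invariants with the $(B \cap K)$-invariants of $\chi|_{B \cap K}$. The character $\chi$ extended from $T$ is automatically trivial on $U \cap K$, so this space is one-dimensional precisely when $\chi|_{T(\mathfrak{o}_F)} = 1$, i.e.\ when $\chi$ is unramified, and zero otherwise. For (ii), $\phi_G = \phi \circ \dett$ is one-dimensional and $K$ acts through $\phi|_{\mathfrak{o}_F^{\times}}$, giving the required dichotomy at once.

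For (iii) I would invoke the standard short exact sequence
$$0 \to \phi_G \to \Indd\nolimits^G_B\bigl(\phi|\cdot|^{1/2}, \phi|\cdot|^{-1/2}\bigr) \to \phi \cdot \Stt\nolimits_G \to 0$$
coming from the reducibility of the principal series at the critical point (\cite{BH}), and apply the functor $(-)^K$. This functor is exact on smooth $\cool[K]$-modules, because $K$ is profinite and $\cool$ has characteristic zero, so every such module is a filtered colimit of semi-simple finite-dimensional $K/K_n$-modules and hence higher smooth cohomology of $K$ vanishes. By (i) and (ii), if $\phi$ is unramified the two outer $K$-invariants are both one-dimensional and the injection between them is an isomorphism, forcing $(\phi \cdot \Stt_G)^K = 0$; if $\phi$ is ramified both outer terms already vanish, and the same conclusion follows. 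Part (iv) is the classical fact that an irreducible cuspidal representation of $GL_2(F)$ has no $K$-fixed vectors: a nonzero spherical vector would generate a submodule on which the commutative spherical Hecke algebra $\mathcal{H}(G, K)$ acts through a character, exhibiting $\pi$ as a subquotient of an unramified principal series; but the subquotients of such series are only twists of $1_G$ and $\Stt_G$ by unramified characters, none of which are cuspidal. The only delicate point in the whole argument is the exactness of $(-)^K$ used in (iii) (together with matching the chosen short exact sequence to the conventions of \cite{BH}); once these are in place every part is a direct Frobenius reciprocity computation.
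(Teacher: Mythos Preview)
Your proof is correct and follows essentially the same route as the paper's: Frobenius reciprocity reduces everything to computing $\pi^{K}$, Iwasawa handles (i), $\det(K)=\mathfrak{o}_F^{\times}$ handles (ii), and the Steinberg exact sequence (split over $K$ by compactness) gives (iii) by a dimension count against (i) and (ii). The only differences are cosmetic: the paper uses the unnormalized sequence $0\to\phi_G\to\Indd^G_B(\phi\otimes\phi)\to\phi\cdot\Stt_G\to 0$ in place of your $(\phi|\cdot|^{1/2},\phi|\cdot|^{-1/2})$ version, and for (iv) it simply notes that $I^{(0)}\subset K^{(m)}_1$, so a $K^{(m)}_1$-fixed vector yields an Iwahori-fixed vector and non-cuspidality by \cite{BH}~14.3, rather than appealing to the spherical Hecke algebra.
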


\begin{thm} Let $1 \neq w \in W_a$ such that $X_w(1) \neq \emptyset$.
\begin{itemize}
\item[(i)] Let $\chi$ be a character of $T(F)$. Then
$$\Homm\nolimits_G(H^{\ell(w)}_c(\overline{X}_w(1), \cool), \Indd\nolimits^{G}_{B}\chi) = \begin{cases} \cool(\frac{1-\ell(w)}{2}) & \text{ if } \chi 
\text{ unramified, }\\ 0 & \text{ else. }\end{cases}$$

\item[(ii)] Let $\phi$ be a character of $F^{\times}$. Then $\Homm\nolimits_G(H^{\ell(w)}_c(\overline{X}_w(1), \cool), \phi_G) = 0$.

\item[(iii)] Let $\phi$ be a character of $F^{\times}$. Then 
$$\Homm\nolimits_G(H^{\ell(w)}_c(\overline{X}_w(1), \cool ), \phi \cdot \Stt\nolimits_G) = \begin{cases} \cool(\frac{1-\ell(w)}{2}) & \text{ if } \phi \text{ unramified, }\\ 0 
& \text{ else. }\end{cases}$$

\item[(iv)] Let $\pi$ be a cuspidal representation of $G$. Then $\Homm_G(H^{\ell(w)}_c(\overline{X}_w(1), \cool), \pi) = 0.$
\end{itemize}
\end{thm}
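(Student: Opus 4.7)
The plan is to apply Frobenius reciprocity and then exploit the short exact sequence defining the Steinberg representation of $GL_2(k)$, in order to reduce the problem to the computation of $I$- and $K$-fixed vectors in $\pi$, for which classical formulas are available. Starting from the $G \times \Gamma$-equivariant identification
$$H^{\ell(w)}_c(\overline{X}_w(1), \cool) \cong \cIndd\nolimits^G_K \overline{\Stt}\bigl(\tfrac{\ell(w)-1}{2}\bigr)$$
of Proposition~\ref{hehe}, Frobenius reciprocity for compact induction yields
$$\Homm\nolimits_G\bigl(H^{\ell(w)}_c(\overline{X}_w(1), \cool),\pi\bigr) \cong \Homm\nolimits_K(\overline{\Stt}, \pi|_K)\bigl(\tfrac{1-\ell(w)}{2}\bigr),$$
so the entire question reduces to computing $\Homm\nolimits_K(\overline{\Stt}, \pi|_K)$.

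I would then invoke the semisimple decomposition $\Indd^{GL_2(k)}_{B(k)} \mathbf{1} \cong \mathbf{1} \oplus \Stt_{GL_2(k)}$, which after inflation via $I/K^1 = B(k)$ gives a $K$-equivariant splitting $\Indd^K_I \mathbf{1} \cong \mathbf{1}_K \oplus \overline{\Stt}$ (where $K^1 := \Kerr(K \twoheadrightarrow GL_2(k))$). Applying $\Homm\nolimits_K(-, \pi|_K)$ together with Frobenius reciprocity for the finite induction $\Indd^K_I$ yields
$$\pi^I \cong \pi^K \oplus \Homm\nolimits_K(\overline{\Stt}, \pi|_K),$$
so it suffices to compute $\dimm \pi^I$ and $\dimm \pi^K$ in each of the four cases.

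For (i), by Iwahori--Hecke theory and the Iwasawa decomposition, $\dimm \pi^K = 1$ and $\dimm \pi^I = 2$ if $\chi$ is unramified and both are zero otherwise; the difference is $1$ precisely in the unramified case, giving the claim. For (ii), $\pi = \phi_G$ is one-dimensional with $\pi^K = \pi^I$ in every case (either both zero or both one, according to the ramification of $\phi$), so $\pi^I/\pi^K$ vanishes identically. For (iii), $\Stt_G$ has no spherical vector so $\pi^K = 0$ for $\pi = \phi \cdot \Stt_G$, while $\dimm \pi^I = 1$ if $\phi$ is unramified and $0$ otherwise (standard computation for unramified twists of the Steinberg), yielding the stated dichotomy. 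For (iv), the classical fact that a supercuspidal representation of $G$ does not lie in the Iwahori Bernstein block implies $\pi^I = 0$, hence also $\pi^K = 0$, and the Hom vanishes.

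The main technical point---and the step where I would need to be most careful---is the Galois-equivariance of the identification $\Homm\nolimits_K(\overline{\Stt}, \pi|_K) \cong \pi^I/\pi^K$: one must verify that the resulting one-dimensional Hom spaces in (i) and (iii) are genuinely $\cool$ with trivial $\Gamma$-action, so that the only cyclotomic factor in the final answer is the Tate twist $\cool(\tfrac{1-\ell(w)}{2})$ inherited from Proposition~\ref{hehe}. Since $\overline{\Stt}$, $\pi^I$, and $\pi^K$ all descend from objects defined over $k$ and the functors involved preserve $\Gamma$-structure, no further twist should arise, but this bookkeeping must be done once and for all. Modulo this verification, the theorem follows from the Iwahori--Hecke computations recalled above.
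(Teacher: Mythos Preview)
Your argument is correct and takes a genuinely different route from the paper's. The paper, after the same Frobenius-reciprocity reduction to $\Homm_{K^{(m)}_1}(\overline{\Stt},\pi)$, treats the four cases by rather different methods: for (i) it restricts $\overline{\Stt}$ further to $I^{(m)}$ and proves via the Mackey formula that $\overline{\Stt}|_{I^{(m)}}\cong \Indd_{U^{(m)}}^{I^{(m)}}1_{\cool}$ (with $U^{(m)}$ the preimage of the diagonal torus of $GL_2(k)$), then applies Frobenius reciprocity once more to reduce to $\chi|_{B\cap U^{(m)}}$; (ii) is done by an irreducibility/dimension argument; (iii) is deduced from (i) and (ii) via the splitting $\phi_G\oplus\phi\cdot\Stt_G=\Indd_B^G\phi_T$ over $K^{(m)}_1$; and (iv) follows from $\overline{\Stt}\subset\Indd_{I^{(0)}}^{K^{(m)}_1}1_{\cool}$ together with \cite{BH}~14.3. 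Your approach is more uniform: the identity $\Homm_K(\overline{\Stt},\pi)\cong \pi^I/\pi^K$ reduces all four cases to standard Iwahori--Hecke computations of $\dim\pi^I$ and $\dim\pi^K$. This buys a cleaner conceptual picture and avoids the auxiliary subgroup $U^{(m)}$, at the price of invoking those classical dimension formulas as black boxes, whereas the paper's argument is entirely self-contained within the framework already set up. Your bookkeeping remark about the $\Gamma$-action is correct: since $\pi$ carries the trivial $\Gamma$-structure and the only twist on the cohomology side is the Tate twist from Proposition~\ref{hehe}, no extra factor appears.
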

The essential ingredient in the proof of these Theorems (as well as the corresponding Theorems for $b \neq 1$) is the Frobenius reciprocity 
(\cite{BH} 2.4-5) which one has to apply several times. 

Let now $b =  \begin{pmatrix} 1 & 0 \\ 0 & t^{\alpha} \end{pmatrix}$ with $\alpha > 0$ and $w \in W_a$ with $\ell(w) > \alpha$ 
such that $X_w(b) \neq \emptyset$. We have: $J_b = T(F) = T(\mathfrak{o}_F) \times \mathbb{Z}^2$. The groups $H^r_c(X_w(b),\cool)$ are zero for 
$r \neq \ell(w) -\alpha, \ell(w) - \alpha + 1$. For $r = \ell(w) -\alpha, \ell(w) - \alpha + 1$, we have an isomorphism of $T(F)$-representations: 
$$H^r_c (\overline{X}_w(b),\cool) \cong \cIndd\nolimits_{T(\mathfrak{o}_F)}^{T(F)} 1_{\cool}.$$
In particular, $T(\mathfrak{o}_F)$ acts trivial on $H^r_c (\overline{X}_w(b),\cool)$, and this representation is inflated from the regular representation 
of $T(F)/T(\mathfrak{o}_F) = \mathbb{Z}^2$ (Theorem \ref{carbn}). Thus the only information encoded in these representations is the action of $J_b$ 
on the set of the connected components of $X_w(b)$ by permutation. A similar situation occurs also in the next case. An analogous result is proven for 
the groups $SL_2$, $SL_3$ in \cite{Zb}.

Let now $b = b_1 = \begin{pmatrix} 0 & 1 \\ t & 0 \end{pmatrix}$, and let $w \in W_a$ be such that $X_w(b_1) \neq \emptyset$. 
Then $J_{b_1} = D^{\times}$ is the multiplicative group of the quaternion division algebra $D$ over $F$, and 
$$H^r_c( \overline{X}_w(b_1), \cool) \cong \begin{cases} \cIndd_{U_D}^{D^{\times}} 1_{\cool}(\frac{\ell(w)}{2}) & \text{if } r = \ell(w),\\ 0 & \text{else},\end{cases}$$
are isomorphisms of $D^{\times} \times \Gamma$-modules. 

For any character $\chi$ of $F^{\times}$ let $\chi_D = \chi \circ \dett$ be a character of $D^{\times}$. These are
all one-dimensional representations of $D^{\times}$ (\cite{BH} 53.5). Let further $R \mathbb{Z}$ denote the regular representation of $D^{\times}/U_D = \mathbb{Z}$.

\begin{thm} Let $w \in W_a$ such that $X_w(b_1) \neq \emptyset$. Then
$$H^{\ell(w)}_c (\overline{X}_w(b_1), \cool) \cong \inff\nolimits_{\mathbb{Z}}^{D^{\times}} R \mathbb{Z},$$
 as  $D^{\times}$-representations. 
\begin{itemize}
\item[(i)] Let $\chi$ be a character of $F^{\times}$. Then 

\begin{equation}\nonumber \Homm\nolimits_{D^{\times}} (H^{\ell(w)}_c( \overline{X}_w(b_1), \cool), \chi_D) = 
\begin{cases} \cool(-\frac{\ell(w)}{2}) & \text{ if } \chi \text{ unramified, } \\ 0 & \text{ else.} \end{cases} \end{equation}
\item[(ii)] Let $\pi$ be an  irreducible representation of $D^{\times}$ of dimension $\geq 2$. Then \\
$$\Homm\nolimits_{D^{\times}} (H^{\ell(w)}_c( \overline{X}_w(b_1), \cool), \pi) = 0.$$
\end{itemize}
\end{thm}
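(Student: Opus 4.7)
The starting point is the $D^\times \times \Gamma$-equivariant isomorphism
$$H^{\ell(w)}_c(\overline{X}_w(b_1), \cool) \cong \cIndd\nolimits_{U_D}^{D^\times} 1_{\cool}\bigl(\tfrac{\ell(w)}{2}\bigr)$$
already recorded in the introduction, so the entire theorem reduces to an analysis of the smooth $D^\times$-representation $V := \cIndd\nolimits_{U_D}^{D^\times} 1_{\cool}$. To identify $V$ with $\inff\nolimits_{\mathbb{Z}}^{D^\times} R\mathbb{Z}$, I would use that $U_D$ is normal in $D^\times$ with quotient $D^\times / U_D \cong \mathbb{Z}$ via the valuation of $D$. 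Then $V$ is the space of compactly supported functions on $D^\times / U_D \cong \mathbb{Z}$ with $D^\times$ acting by left translation through this quotient, which is precisely $\inff\nolimits_{\mathbb{Z}}^{D^\times} R\mathbb{Z}$. Since the Tate twist leaves the underlying $D^\times$-module unchanged, this yields the first assertion.

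For parts (i) and (ii), the key tool is Frobenius reciprocity (\cite{BH} 2.4--5): for any smooth representation $\pi$ of $D^\times$,
$$\Homm\nolimits_{D^\times}(V, \pi) \cong \Homm\nolimits_{U_D}(1_{\cool}, \pi|_{U_D}) = \pi^{U_D}.$$
The Tate twist $(\ell(w)/2)$ on the source becomes a twist $\cool(-\ell(w)/2)$ on the Hom side, so everything reduces to computing the space of $U_D$-invariants in the given $\pi$.

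For (i), take $\pi = \chi_D = \chi \circ \dett$. Since $\dett$ maps $U_D$ surjectively onto $\mathfrak{o}_F^\times$, the restriction $\chi_D|_{U_D}$ is trivial iff $\chi|_{\mathfrak{o}_F^\times}$ is trivial, i.e.\ iff $\chi$ is unramified. Consequently $(\chi_D)^{U_D}$ is one-dimensional in the unramified case and vanishes otherwise; combined with the twist this gives the formula claimed.

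For (ii), I would exploit normality of $U_D$ in $D^\times$: the subspace $\pi^{U_D} \subset \pi$ is a $D^\times$-subrepresentation, so by irreducibility it is either $0$ or all of $\pi$. In the second case $\pi$ factors through the abelian quotient $D^\times / U_D \cong \mathbb{Z}$, forcing $\dimm \pi = 1$ and contradicting $\dimm \pi \geq 2$. Hence $\pi^{U_D} = 0$. Together with the classification of one-dimensional representations of $D^\times$ as the $\chi_D$ (\cite{BH} 53.5), parts (i) and (ii) exhaust all irreducible smooth representations of $D^\times$. No substantive obstacle arises beyond careful bookkeeping with the Tate twist; the whole argument is essentially formal once the description of $H^{\ell(w)}_c$ from the introduction is in hand.
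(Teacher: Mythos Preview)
Your proposal is correct and follows essentially the same approach as the paper: the identification $H^{\ell(w)}_c \cong \cIndd_{U_D}^{D^\times} 1_{\cool}(\tfrac{\ell(w)}{2})$ is Proposition~\ref{cohobnf}, the passage to $\inff_{\mathbb{Z}}^{D^\times} R\mathbb{Z}$ via normality of $U_D$ and $D^\times/U_D \cong \mathbb{Z}$ matches Lemma~\ref{level0} (which in turn invokes Lemma~\ref{normalind}), part~(i) is exactly the paper's Frobenius reciprocity computation together with $\dett(U_D) = \mathfrak{o}_F^\times$, and your argument for~(ii) via the subrepresentation $\pi^{U_D}$ is a minor rephrasing of the paper's observation that any nonzero map from a $U_D$-trivial module onto irreducible $\pi$ forces $\pi$ to factor through $\mathbb{Z}$.
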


\textbf{Acknowledgments:} This paper is my diploma thesis, written at the University of Bonn. I want to thank my advisors M. Rapoport and U. Goertz 
for the interesting thema, for explanations on this subject and for many helpful discussions.
 
\section{Affine Deligne-Lusztig varieties inside the affine flag manifold}

\subsection{Notations} \mbox{}

Let $k$ be a field with $q$ elements, and let $\bar{k}$ be an algebraic closure of $k$. Let $\sigma$ denote the Frobenius morphism of $\bar{k}/k$. 
We set $F = k((t))$ and $L = \bar{k}((t))$. We extend $\sigma$ to the Frobenius morphism of $L/F$. Thus $\sigma(\sum a_n t^n) = \sum \sigma(a_n)t^n$. We write 
$\mathfrak{o} = \bar{k}[[t]]$ and $\mathfrak{o}_F = k[[t]]$ for the valuation rings of $L$ and $F$, and $\mathfrak{p}$ and $\mathfrak{p}_F$ 
for their maximal ideals. Furthermore, we denote the valuation on $L$ by $v_L$. 

\subsection{The Bruhat-Tits building} \mbox{}

In this subsection, we recall the Bruhat-Tits buildings of the groups $SL_2(L)$ and $SL_2(F)$, and prove some facts about them which we will need later on. 
A detailed discussion can be found in \cite{Br}, chapters 4 and 5.

Let $\sim$ be the equivalence relation on the set of all $\mathfrak{o}$-lattices in $L^2$ given by $\mathfrak{L} \sim \mathfrak{L}^{\prime}$ 
if and only if there is a scalar $c \in L$ with $c\mathfrak{L} = \mathfrak{L}^\prime$.

\begin{Def} The Bruhat-Tits building $\build$ of $SL_2(L)$ is the one-dimensional simplicial complex such that 
\begin{itemize}
\item[(i)] its 0-dimensional simplices (vertices) are equivalence classes under $\sim$ of $\mathfrak{o}$-lattices $\mathfrak{L} \subset L^2$; 

\item[(ii)] two vertices are connected by a 1-dimensional simplex (alcove) if and only if there are representatives $\mathfrak{L}_0, \mathfrak{L}_1$ of them, 
such that $t \mathfrak{L}_1 \subsetneq \mathfrak{L}_0 \subsetneq \mathfrak{L}_1$. 
\end{itemize}
\end{Def}

Then $\build$ is a tree (a graph in which every two vertices are connected by exactly one path) with infinitely many vertices and infinitely many 
alcoves containing a fixed vertex. One can accomplish the same construction with $F$ and $\mathfrak{o}_F$ instead of $L$ and $\mathfrak{o}_L$. We denote 
the simplicial complex arising from this construction by $\ratbuild$. It is the Bruhat-Tits building of $SL_2(F)$. It is again a tree with infinitely many 
vertices. For a fixed vertex there are exactly $q+1$ alcoves containing it. We see $\ratbuild$ as a subset of $\build$, by sending 
an $\mathfrak{o}_F$-module $\mathfrak{L} \subseteq F^2$ to $\mathfrak{L}\otimes_{\mathfrak{o}_F} \mathfrak{o} \subseteq L^2$. 
Further $\sigma$ acts on $\build$, and $\ratbuild$ are exactly the fixed points. 

\begin{Def} Let $\mathfrak{L}$ be a $\mathfrak{o}$-lattice in $L^2$, and let $m \in \mathbb{Z}$ be such that $\wedge^2 \mathfrak{L} = t^m\mathfrak{o}$. 
The type of the vertex represented by $\mathfrak{L}$ is 
$$ \begin{cases}0 & \text{ if } m \text{ is even,} \\ 1 & \text{ if } m  \text{ is odd}. \end{cases}$$ \end{Def}

Since scalar multiplication changes this integer by some even number, the Definition is independent of the choice of the representing lattice.

\begin{Def} 
\mbox{}
\begin{itemize}
\item[(i)] A gallery in $\build$ is a sequence $(C_0, C_1, ..., C_n)$ of alcoves such that $C_i$ and $C_{i+1}$ are adjacent (i.e. have a common 
vertex) for every $0\leq i \leq n-1$. A gallery is non-stuttering if $C_i \neq C_{i+1}$ for all $i$. 

\item[(ii)] The length of the gallery $\Gamma = (C_0, C_1, ..., C_n)$ is defined as $$\ell(\Gamma) = n.$$

\item[(iii)] If $\Gamma = (C_0, C_1, ..., C_n)$ and $\Gamma^{\prime} = (C^{\prime}_0, C^{\prime}_1..., C^{\prime}_m)  $ are two galleries such 
that $C_n, C_{0}^{\prime}$ are adjacent, then the composite gallery is 
$$(\Gamma, \Gamma^{\prime}) = (C_0, C_1..., C_n, C^{\prime}_0, C^{\prime}_1..., C_m^{\prime}).$$ 

\item[(iv)] The inverse gallery of $\Gamma = (C_0, C_1, ..., C_n)$ is the gallery $$\Gamma^{-1} = (C_n, C_{n-1}..., C_0).$$
\end{itemize}
\end{Def}
If $\Gamma, \Gamma^{\prime}$ are galleries, then the length of the composed gallery $(\Gamma, \Gamma^{\prime})$ is given by 
$$\ell(\Gamma, \Gamma^{\prime}) = \ell(\Gamma) + \ell(\Gamma^{\prime}) + 1.$$
\noindent For every two alcoves $C, D$ in $\build$ resp. $\ratbuild$ there is a gallery of minimal length containing them both and having $C$ 
as the first alcove. Since $\build$ resp. $\ratbuild$ is a tree, this gallery is unique. 

\begin{Def}\label{distmin} A gallery $\Gamma = (C_0,C_1...,C_n)$ is called minimal if it is the minimal gallery connecting $C_0$ and $C_n$.
\end{Def}

\begin{Def} Let $\Gamma = (C_0, C_1, ..., C_n)$ be a gallery in $\build$. A first vertex of $\Gamma$ is a vertex of $C_0$ 
which satisfies the following condition:
\begin{itemize}
\item[(*)] It is not a vertex of $C_1$ if $n > 0$ and $C_0 \neq C_1$.
\end{itemize}
A last vertex of $\Gamma$ is a first vertex of the inverse gallery $\Gamma^{-1}$.
\end{Def}

This means the following: if $\ell(\Gamma) = 0$ or $\ell(\Gamma) > 0$ and $C_0 = C_1$, then every vertex of its first alcove is 
a first vertex of $\Gamma$. If $\ell(\Gamma) > 0$ and $C_0 \neq C_1$, then $\Gamma$ has a unique fisrt vertex: it is the vertex 
of $C_0$ which is not a vertex of $C_1$. In particular, every gallery has at least one first resp. last vertex. 
If a gallery has more than one alcove, and is minimal, then the first resp. last vertex is unique.
We say that a gallery contains a vertex if it contains an alcove which has this vertex as one of its faces. 
Since $\build$ is a tree, there is a unique gallery with minimal length, containing a vertex $P$ and an alcove $C$ of $\build$, and having $P$ as a first 
vertex. Analogously, there is a unique gallery of minimal length, containing two distinct vertices $P, Q$ of $\build$, and having $P$ as a first vertex.
Such galleries are minimal in the sense of Definition \ref{distmin}. 

If $P,Q$ are some simplices in $\build$, then we say that a gallery $\Gamma$ is \textit{stretched from $P$ to $Q$} if $\Gamma$ is minimal and has $P$ 
resp. $Q$ as a first resp. last vertex or alcove.

\begin{Def} \mbox{}
\begin{itemize}
\item[(i)]  The distance between two alcoves $C$, $D$ in $\build$ is the length of the gallery stretched from $C$ to $D$.
\item[(ii)] The distance between a vertex $P$ and an alcove $D$ in $\build$ is the length of the gallery stretched from $P$ to $D$. 
\end{itemize}
\end{Def}

We will need the following characterisation of minimal galleries:

\begin{Lm}\label{MinimalityLm}
\mbox{} 
\begin{itemize}
\item[(i)] A gallery $\Gamma = (C_0, C_1,...,C_n)$ in $\build$ resp. in $\ratbuild$ is minimal if the following conditions hold:
\begin{itemize}
\item[(a)] $\Gamma$ contains no alcove twice.

\item[(b)] For every alcove $C_i$ with $0 < i < n$ the following holds: if $P$, $Q$ are the two vertices of $C_i$, and $P$ is a vertex of $C_{i-1}$, 
then $Q$ is a vertex of $C_{i+1}$.
\end{itemize}

\item[(ii)] Let $\Gamma = (C_0, C_1,...,C_n), \Gamma^{\prime} = (C^{\prime}_0, C^{\prime}_1..., C^{\prime}_m)$ be two minimal galleries such that 
$C_n$ and $C^{\prime}_0$ are adjacent. Then the composite gallery $(\Gamma, \Gamma^{\prime})$ is minimal if the following conditions hold:
\begin{itemize}
\item[(c)] $\Gamma$, $\Gamma^{\prime}$ have no common alcoves.
 
\item[(d)] A last vertex of $\Gamma$ is a first vertex of $\Gamma^{\prime}$.
\end{itemize}
\end{itemize}
\end{Lm}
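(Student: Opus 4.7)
The plan is to translate the problem into graph theory. Recall that $\build$ and $\ratbuild$ are trees with alcoves as edges and $0$-simplices as graph-vertices. I will associate to $\Gamma$ a walk $(v_0,\dots,v_{n+1})$ in graph-vertices whose consecutive edges are $C_0,\dots,C_n$; conditions (a) and (b) translate into this walk being non-backtracking, and in a tree a non-backtracking walk is the unique simple geodesic between its endpoints, forcing $\Gamma$ to be minimal. Part (ii) is then reduced to (i) by checking that hypotheses (c) and (d) imply (a) and (b) for the composite.

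For (i), assume $n \ge 1$, the case $n = 0$ being trivial. Choose a first vertex $v_0$ and a last vertex $v_{n+1}$ of $\Gamma$, and for $1 \le i \le n$ set $v_i := C_{i-1} \cap C_i$, which is a single vertex since $C_{i-1} \ne C_i$ by (a). Consecutive $v_i, v_{i+1}$ lie in $C_i$. Condition (b) at each interior index gives $v_i \ne v_{i+1}$ for $1 \le i \le n-1$, while the definition of first/last vertex combined with (a) gives $v_0 \ne v_1$ and $v_n \ne v_{n+1}$. An equality $v_i = v_{i+2}$ would force $C_i$ and $C_{i+1}$ to share both endpoints and hence coincide, contradicting (a); so the walk has no immediate backtracks. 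In a tree, a non-backtracking walk of positive length cannot close up, so the walk has no vertex repetitions and is the unique tree-geodesic from $v_0$ to $v_{n+1}$, with edge-sequence $C_0,\dots,C_n$. This is the unique simple alcove-path from $C_0$ to $C_n$, so every gallery joining these alcoves has at least $n$ steps, and $\Gamma$ is minimal.

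For (ii), observe first that in a tree any minimal gallery coincides with the unique simple alcove-path between its endpoint alcoves, hence automatically satisfies (a) and (b) internally; thus both $\Gamma$ and $\Gamma^\prime$ do. Combined with (c) this yields (a) for the composite $(\Gamma,\Gamma^\prime)$. Concerning (b), the only index of the composite not already controlled by (b) for $\Gamma$ or for $\Gamma^\prime$ alone is the join position, where the entering vertex of $C_n$ is $v_n = C_{n-1} \cap C_n$ and the exiting vertex is $w := C_n \cap C_0^\prime$. By (d), $w$ is a last vertex of $\Gamma$ and hence, by definition, a vertex of $C_n$ not contained in $C_{n-1}$; so $w \ne v_n$, and condition (b) holds at the junction. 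Applying (i) to the composite finishes the proof.

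The main obstacle is not conceptual but notational: one has to handle the degenerate short-gallery cases and the stuttering situation $C_0 = C_1$, in which the first/last vertices are not uniquely determined, and to match the index conventions on either side of the junction in (ii). All of this is routine once the walk-in-a-tree dictionary is in place; the identical argument applies to $\ratbuild$, which is likewise a tree.
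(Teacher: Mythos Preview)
Your proof is correct. For part (i) you take a genuinely different route than the paper: the paper argues by induction on $n$, removing the alcove $C_n$ from the tree to disconnect it into two components and then forcing any gallery from $C_0$ to $C_{n+1}$ to pass through $C_n$; you instead translate $\Gamma$ into a vertex walk and invoke the standard graph-theoretic fact that a non-backtracking walk in a tree is the geodesic between its endpoints. Your approach is more conceptual and avoids the inductive bookkeeping, at the cost of relying on that lemma about trees rather than proving everything in-house. For part (ii) both you and the paper reduce to (i) by verifying (a) and (b) for the composite. One small omission: there are \emph{two} junction alcoves where (b) is not already controlled by the minimality of $\Gamma$ or $\Gamma'$ alone, namely $C_n$ and $C_0'$, and you only verify (b) at $C_n$. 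The check at $C_0'$ is symmetric --- your vertex $w$ is, by (d), also a first vertex of $\Gamma'$, hence not a vertex of $C_1'$ when $m>0$ --- and the paper likewise treats $C_n$ explicitly and defers $C_0'$ to symmetry, but you should at least flag that a second check is needed. Relatedly, your sentence ``by (d), $w$ is a last vertex of $\Gamma$'' needs one more clause: (d) only furnishes \emph{some} vertex that is both a last vertex of $\Gamma$ and a first vertex of $\Gamma'$, and you need (c) (so $C_n \neq C_0'$, hence $C_n \cap C_0'$ is a single point) to conclude that this vertex coincides with your $w := C_n \cap C_0'$.
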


\begin{proof} The proofs for $\build$ and $\ratbuild$ are the same, thus we restrict ourselves to the case of $\build$.
At first we prove (i). We proceed by induction. The cases $\ell(\Gamma)= 0$ 
and $\ell(\Gamma) = 1$ are trivial. Assume now that (a) and (b) imply minimality for every gallery of length $\leq n$ with $n \geq 1$. 
Let $\Gamma = (C_0,C_1..., C_{n+1})$ be a gallery satisfying the conditions (a) and (b). Then the gallery $\Gamma_1 = (C_0,C_1, ..., C_n)$ satisfies these conditions, too 
and is minimal by the induction hypothesis. Now, removing the alcove $C_n$ (without vertices) from $\build$, divides $\build$ in two connected components, 
both of which are trees. By (a), the alcoves $C_{n-1}$, $C_n$, $C_{n+1}$ are pairwise distinct and by (b), the alcoves $C_{n-1}$, $C_{n+1}$ are not adjacent 
(otherwise $C_{n-1}$, $C_n$, $C_{n+1}$ would give a non-trivial cycle) and thus $C_{n-1}$, $C_{n+1}$ lie in different connected components, described above. 
By minimality of $\Gamma_1$, the gallery $(C_0,C_1,...,C_{n-1})$ does not contain $C_n$. Thus $C_0$ lies in the same connected component 
as $C_{n-1}$. Thus every gallery connecting $C_0$ with $C_{n+1}$ contains $C_n$. This holds for the unique gallery $\Gamma_{min}$ stretched from $C_0$ to $C_{n+1}$, 
and thus it will have the form $\Gamma_{min} = (C_0,D_1,...,D_r,C_n, D_{r+1}...,D_{s}, C_{n+1})$. Hence $(C_0,D_1,...,D_r,C_n)$ and $(C_n, D_{r+1},...,D_{s}, C_{n+1})$ are minimal.
But $\Gamma_1 = (C_0,C_1,...C_n)$ is minimal and $C_n$, $C_{n+1}$ are adjacent, thus $\Gamma_{min} = (C_0,C_1,...C_n,C_{n+1}) = \Gamma$. 
Hence $\Gamma$ is minimal.

To prove (ii) we notice first that from (c) and minimality of $\Gamma$, $\Gamma^{\prime}$ the condition (a) of part (i) for the composed gallery 
$(\Gamma, \Gamma^{\prime})$ follows. The condition (b) is also clear for all alcoves of $(\Gamma, \Gamma^{\prime})$ except for $C_n$ and $C^{\prime}_0$. If 
$\ell(\Gamma) = 0$, then the condition (b) for $C_n$ is empty. Assume that $\ell(\Gamma) > 0$. Then, since $\Gamma$ is minimal, its last vertex is unique: 
it is the vertex $P$ of $C_n$ which is not a vertex of $C_{n-1}$. Now by (d), $P$ is a common vertex of $C_n$ and $C^{\prime}_0$. This is exactly the condition (b) 
of (i)  for the alcove $C_n$. The verification of the condition (b) for $C_0^{\prime}$ can be done similarly 
(one can also invert all involved galleries and use above considerations again).
\end{proof}

Let $$I = \begin{pmatrix} \mathfrak{o}^{\times} & \mathfrak{o} \\ \mathfrak{p} & \mathfrak{o}^{\times}\end{pmatrix}$$ 
\noindent be the standard Iwahori subgroup of $GL_2(L)$, and $$I^{SL_2} = I \cap SL_2(L)$$ the standard Iwahori subgroup of $SL_2(L)$.

The groups $SL_2(L)$ and $GL_2(L)$ act transitively   on the set of the alcoves of $\build$. Further $SL_2(L)$ acts transitively
on all vertices with the same type $m$ in $\build$ (Lemma \ref{transit}), and $GL_2(L)$ acts transitively on the set of all vertices.
One has an obvious base vertex of type $0$ represented by $\mathfrak{o} \oplus \mathfrak{o}$ with stabilizer $SL_2(\mathfrak{o})$, resp. $GL_2(\mathfrak{o})$.

\begin{Def} 
 Let $C^{0}_M$ denote the base alcove, represented by $\mathfrak{o} \oplus t\mathfrak{o} \subsetneq \mathfrak{o} \oplus \mathfrak{o}$.
\end{Def}

The stabilizer of $C^0_M$ under the action of $SL_2(L)$ is $I^{SL_2}$. Let $T$ be the diagonal 
torus of $GL_2$, and $T^{SL_2}  = T \cap SL_2$ the diagonal torus of $SL_2$. Let further $N_{SL_2}(T^{SL_2})$ denote the normalizer of $T^{SL_2}$ 
in $SL_2$. Then we define:

\begin{Def} 
\mbox{}
\begin{itemize}
\item[(i)] The standard apartment $A_M$ is the minimal full subcomplex of $\build$ whose alcoves lie in the $N_{SL_2}(T^{SL_2})(L)$-orbit of $C^{0}_M$. \\
\item[(ii)] The affine Weyl group of $SL_2(L)$ is the group $$W_a = N_{SL_2}(T^{SL_2})(L)/(T^{SL_2})(\mathfrak{o}).$$
\end{itemize}
\end{Def}

One has $T^{SL_2}(\mathfrak{o}) = N_{SL_2}(T^{SL_2})(L) \cap I^{SL_2}$. Thus $W_a$ acts simply transitively on the set of the alcoves in the standard apartment. 
By the choice of the alcove $C^0_M$, we identify $W_a$ with $A_M$. Furthermore we number the alcoves in $A_M$ by integers 
and call the $i$-th alcove $C_M^i$ such that $C_M^i$ is represented by 
$\mathfrak{o} \oplus t^{i+1}\mathfrak{o} \subsetneq \mathfrak{o}\oplus t^i\mathfrak{o}$.
Then $\begin{pmatrix} t^{-i} & 0 \\ 0 & t^i \end{pmatrix}$ corresponds under the above identification to $C^{2i}_M$ and 
$\begin{pmatrix} 0 & t^{-i} \\ -t^i & 0 \end{pmatrix}$ corresponds to $C_M^{2i-1}$ for $i \in \mathbb{Z}$. 

The group $W_a$ is a Coxeter group on two generators of order two with no further relations. Let $\ell(w)$ denote the length of the element 
$w \in W_a$. If $wC_M^0 = C_M^i$, then $\ell(w) = \abs{i}$.


We have a Bruhat decomposition of $SL_2(L)$: $$SL_2(L) = \bigcup_{w \in W_a} I^{SL_2} w I^{SL_2}.$$ 

\begin{Def} 
The relative position map on the set of the alcoves is $$\inv: SL_2(L)/I^{SL_2} \times SL_2(L)/I^{SL_2} \rightarrow W_a.$$ 
It maps the cosets $xI^{SL_2}$, $yI^{SL_2}$ to the unique element $w$ of the affine Weyl group such that $x^{-1}y \in I^{SL_2}wI^{SL_2}$.  
\end{Def}

Via the above identification of $W_a$ with $A_M$, we see the relative position of two alcoves as an alcove in the standard apartment. 
Let $D, D^{\prime}$ be two alcoves in $\build$ and let $\Gamma$ be the gallery, stretched from $D$ to $D^{\prime}$, with length $i$. Then 
$$\inv(D, D^{\prime}) = \left\{ \begin{array}{cl}C^i_M & \text{ if a first vertex of } \Gamma \text{ has type 0,} \cr C^{-i}_M & \text{ if a 
first vertex of }\Gamma \text{ has type 1.} \end{array}\right.$$

\begin{Def} Let $N_{GL_2}(T)$ be the normalizer of $T$ in $GL_2$. The extended affine Weyl group is $$\tilde{W} = N_{GL_2}(T)(L)/T(\mathfrak{o}).$$
\end{Def}
 
Then a Bruhat-decomposition of $GL_2(L)$ is given by: 
$$GL_2(L) = \bigcup_{\tilde{w} \in \tilde{W}} I\tilde{w} I.$$

\noindent The affine and the extended affine Weyl groups are closely related: the inclusion \\ 
$N_{SL_2}(T^{SL_2})(L) \hookrightarrow N_{GL_2}(T)(L)$ induces a short exact sequence: 
$$ 1 \rightarrow W_a \rightarrow \tilde{W} \rightarrow \mathbb{Z} \rightarrow 0, $$
where the last map is induced by $v_L \circ \dett$. This sequence is split by the map sending $m \in \mathbb{Z}$ to $\begin{pmatrix} 0 & 1 \\ t & 0\end{pmatrix}^m$, 
and we have $\tilde{W} = W_a \rtimes \mathbb{Z}$. Thus an element of $\tilde{W}$ is given by a pair $(w,m)$, where $w \in W_a$
 and $m \in \mathbb{Z}$. 



\subsection{The affine flag manifold}\label{affflmfd} \mbox{}

In this subsection we define the affine flag manifolds of $SL_2$ and $GL_2$ and show that they are Ind-schemes. 
A similar situation in characteristic $0$ with the affine Grassmanian instead of the affine flag manifold is discussed in (\cite{BL} section 2).

If $R$ is a $k$-algebra, then a $R[[t]]$-lattice in $R((t))^2$ is a $R[[t]]$-submodule $\mathfrak{L}$ such that there exists a 
positive integer $N$ with $t^N R[[t]]^2 \subseteq \mathfrak{L} \subseteq t^{-N} R[[t]]^2$ and $t^{-N}R[[t]]^2/\mathfrak{L}$ is a locally on $\Specc(R)$ 
free module of finite rank.

\begin{Def} 
The affine flag manifold $X^{SL_2}$ of $SL_2$ is the functor, whose $R$-valued points are 
$$X^{SL_2}(R) = \left\{\mathfrak{L}_0 \subsetneq \mathfrak{L}_1 \subsetneq R((t))^2 \colon \begin{array}{cl} \mathfrak{L}_0, \mathfrak{L}_1 \text{ }
R[[t]]\text{-lattices, } \wedge^{2} \mathfrak{L}_0 = R[[t]],\cr t\mathfrak{L}_1 \subsetneq \mathfrak{L}_0 \subsetneq \mathfrak{L}_1, 
\mathfrak{L}_0/t\mathfrak{L}_1\text{ is loc.}  \cr \text{ on }\Specc(R) \text{ free of rank }1 \end{array}\right\}.$$ 
\end{Def}

Then $X^{SL_2}$ is an Ind-scheme: set 
$$X^{SL_2,n}(R) = \{ (\mathfrak{L}_0 \subsetneq \mathfrak{L}_1) \in X^{SL_2}(R)\colon t^n R[[t]]^2 \subseteq \mathfrak{L}_0 \subsetneq \mathfrak{L}_1 
\subseteq t^{-n} R[[t]]^2 \}.$$

\noindent Then $X^{SL_2,n}$ is isomorphic to a closed subscheme of a partial flag manifold of \\
$V = t^{-n} k[[t]]^2 /t^n k[[t]]^2$: let $\mathfrak{Flag}_{2n, 2n+1} (V)$ be the partial flag manifold of $V$ 
such that for every $k$-algebra $R$ the $R$-valued points are 

$\mathfrak{Flag}_{2n, 2n+1} (V) (R) = $
$$ = \left\{ L_0 \subsetneq L_1 \subsetneq R \otimes V \colon \begin{array}{cl} L_i, R\otimes V/L_i \text{ are loc. on } \Specc(R) \text{ free } 
R\text{-modules} \cr \text{for }i = 0,1; \text{rk} (L_0) = 2n, \text{rk} (L_1) = 2n + 1 \end{array} \right\}.$$

\noindent Let $\mathfrak{Flag}_{2n, 2n+1}^t (V)$ be the closed subscheme of $\mathfrak{Flag}_{2n,2n+1}(V)$ parametrizing the chains of $t$-stable subspaces of 
$V$ of dimensions $2n, 2n+1$. There is a closed embedding: 
\begin{equation}\label{affflmfdfrml} X^{SL_2,n} \longrightarrow \mathfrak{Flag}_{2n, 2n+1}^t (V),\end{equation}
$$(\mathfrak{L}_0 \subsetneq \mathfrak{L}_1) \mapsto (\mathfrak{L}_0/t^n k[[t]]^2 \subsetneq \mathfrak{L}_1/t^n k[[t]]^2),$$
which is an isomorphism on $K$-valued points for every field $K$ containing $k$ (compare \cite{BL} 2.4. There is also remarked that, 
as Genestier pointed out the scheme on the right hand side is in general not reduced, even in the easiest case). 
Thus we have a filtration of $X^{SL_2}$ by closed subschemes of finite type over $k$: $$X^{SL_2,1} \subsetneq X^{SL_2,2} \subsetneq ... \subsetneq X^{SL_2}.$$


\noindent The group $SL_2(L)$ acts transitively by linear transformations on $X^{SL_2}(\bar{k})$ and the stabilizer of the $\bar{k}$-valued point 
$\mathfrak{o} \oplus t\mathfrak{o} \subsetneq \mathfrak{o} \oplus \mathfrak{o} $ is $I^{SL_2}$, thus $$X^{SL_2}(\bar{k}) = SL_2(L)/I^{SL_2}.$$ 
We  identify the set of $\bar{k}$-valued points of $X^{SL_2}$ with the set of alcoves in $\build$. Under this identification the $k$-valued points 
$X^{SL_2}(k) = SL_2(F)/(I^{SL_2} \cap SL_2(F))$ correspond to the alcoves in $\ratbuild$. 

The Ind-scheme $X^{SL_2}$ is reduced (\cite{PR} 6.1), and connected (\cite{PR} 5.1).   

\begin{Def} The affine flag manifold $X$ of $GL_2$ is the functor whose $R$-valued points are 
$$X(R) = \left\{\mathfrak{L}_0 \subsetneq \mathfrak{L}_1 \subsetneq R((t))^2 \colon \begin{array}{cl} \mathfrak{L}_0, \mathfrak{L}_1 \text{ are }R[[t]]\text{-lattices, } 
t\mathfrak{L}_1 \subsetneq \mathfrak{L}_0 \subsetneq \mathfrak{L}_1, \mathfrak{L}_0/t\mathfrak{L}_1 \cr \text{ is loc. on }\Specc(R)\text{ free of rank }1 \end{array}\right\}.$$ \end{Def}

Then $X$ is again an Ind-scheme, and we have: $X(\bar{k}) = GL_2(L)/I$. Set further

$$X^{(v)}(R) = \left\{ (\mathfrak{L}_0 \subsetneq \mathfrak{L}_1) \in X(R) \colon \begin{array}{cl}\text{ for every morphism} \cr 
\text{Spec}(K) \rightarrow \text{Spec}(R)\text{, with }K \text{ field:}\cr \wedge^2 (\mathfrak{L}_0\otimes K) = t^vK[[t]] \end{array} \right\}.$$ 

\noindent Then $$X = \coprod_v X^{(v)}.$$
The Ind-scheme $X$ is not reduced (\cite{PR} 6.5) in contrast to $X^{SL_2}$. We have:
$$(X^{(v)})_{\redd}(R) = \{ (\mathfrak{L}_0 \subsetneq \mathfrak{L}_1) \in X(R) \colon \wedge^2\mathfrak{L}_0 = t^v R[[t]]\}. $$
If $r_v \in GL_2(L)$ satisfies $v_L(\dett(r_v))=v$, then the left multiplication by $r_v$ gives an isomorphism 
$X^{SL_2} \stackrel{\sim}{\rightarrow} (X^{(v)})_{\redd}$. 

We identify $X^{SL_2}$ with $(X^{(0)})_{\redd}$ by choosing $r_0 = 1$:  
$$X^{SL_2} \stackrel{\sim}{\rightarrow} (X^{(0)})_{\redd} \hookrightarrow X_{\redd}.$$ 
Let $H \subset GL_2(L)$ be the subgroup of all matrices with the valuation of the determinant equal $0$. On the $\bar{k}$-valued points 
the above inclusion is given by 
$$X^{SL_2}(\bar{k}) = SL_2(L)/I^{SL_2} = H/I \hookrightarrow GL_2(L)/I.$$
Thus, on the left hand side stands the set of all alcoves in $\build$. The group $H$ acts on them by linear transformations, and this action corresponds 
on the right hand side to left multiplication on the set of the cosets. Further, the action of $H$ on the vertices of $\build$ is type-preserving.


By the above discussion, $X_{\redd}$ is isomorphic to a disjoint union of $\mathbb{Z}$ copies of $X^{SL_2}$.

\begin{Lm}\label{ReprLemma} Let $R = \{r_v \colon v \in \mathbb{Z}\}$ be a subset of $GL_2(L)$ with $v_L(\dett(r_v)) = v$ for every $v \in \mathbb{Z}$. Then there is an
 isomorphism of Ind-schemes: $$\alpha_R: \coprod_v X^{SL_2} \rightarrow X_{\redd}$$ such that $\alpha_R(x_vI^{SL_2}) = r_vx_vI$ for every $v \in
 \mathbb{Z}$.\end{Lm}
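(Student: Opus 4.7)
The strategy is to assemble $\alpha_R$ componentwise and glue via the universal property of the coproduct. The paper has just recorded two essential facts: $X_{\redd} = \coprod_{v \in \mathbb{Z}} (X^{(v)})_{\redd}$ as Ind-schemes, and for each $v$, left multiplication by any $r_v \in GL_2(L)$ with $v_L(\dett(r_v)) = v$ yields an isomorphism $\alpha_R^{(v)} \colon X^{SL_2} = (X^{(0)})_{\redd} \stackrel{\sim}{\rightarrow} (X^{(v)})_{\redd}$. Both ingredients are in place, so the lemma reduces to a formal gluing argument.

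First, I would set $\alpha_R^{(v)}$ to be left multiplication by $r_v$, which on $R$-points sends $(\mathfrak{L}_0 \subsetneq \mathfrak{L}_1)$ to $(r_v \mathfrak{L}_0 \subsetneq r_v \mathfrak{L}_1)$; the determinant computation $\wedge^2(r_v \mathfrak{L}_0) = \dett(r_v) \cdot \wedge^2 \mathfrak{L}_0 = t^v R[[t]]$ (up to a unit in $R[[t]]$) confirms that the image lies in $(X^{(v)})_{\redd}$. Applying the universal property of the coproduct of Ind-schemes, the family $\{\alpha_R^{(v)}\}_{v \in \mathbb{Z}}$ glues uniquely to a morphism
\[ \alpha_R \colon \coprod_v X^{SL_2} \longrightarrow \coprod_v (X^{(v)})_{\redd} = X_{\redd}. \]
On $\bar{k}$-points, $\alpha_R$ takes the coset $x_v I^{SL_2}$ in the $v$-th summand to $r_v x_v I \in GL_2(L)/I$, which is precisely the formula demanded by the statement.

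Second, since each $\alpha_R^{(v)}$ is already known to be an isomorphism of Ind-schemes, and since the coproduct of isomorphisms is again an isomorphism --- its inverse being the coproduct of the componentwise inverses, namely left multiplication by $r_v^{-1}$ composed with the identification $(X^{(0)})_{\redd} = X^{SL_2}$ --- the morphism $\alpha_R$ is an isomorphism. The only substantive input beyond purely formal coproduct manipulation is the previously established fact that left multiplication by a fixed element of $GL_2(L)$ defines a morphism of Ind-schemes; that verification amounts to absorbing the denominators introduced by $r_v$ into a sufficiently large stratum of the filtration $X^{SL_2,n}$, and it is the sole step that is not entirely formal. Everything else is bookkeeping with the coproduct decomposition already at hand.
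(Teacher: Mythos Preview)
Your argument is correct and is exactly the approach the paper intends: the lemma is stated in the paper without proof, immediately after the two facts you invoke (the decomposition $X_{\redd} = \coprod_v (X^{(v)})_{\redd}$ and the isomorphism $X^{SL_2} \stackrel{\sim}{\rightarrow} (X^{(v)})_{\redd}$ given by left multiplication by $r_v$), so your write-up simply makes explicit the gluing that the author regards as evident from the preceding discussion.
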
 

Since $X^{SL_2}$ is connected we have: $\pi_0(X_{\redd}) = \mathbb{Z}$ and the connected component of the coset $I$ is $X^{(0)}$. 


\subsection{Affine Deligne-Lusztig varieties in the affine flag manifold of $GL_2(L)$}\label{ADLVDef} \mbox{}

\begin{Def} Let $\tilde{w} \in \tilde{W}$ and $b \in GL_2(L)$. The affine Deligne-Lusztig variety for $GL_2(L)$ is the locally closed subset of $X$ 
given by $$X_{\tilde{w}}(b) = \{ xI \in GL_2(L)/I \colon x^{-1}b \sigma(x) \in I\tilde{w}I\},$$ 
provided with the reduced sub-Ind-scheme structure. \end{Def}

That $X_{\tilde{w}}(b)$ is indeed locally closed will follow from Proposition \ref{LmGLSL} and the results of the next two sections.
For any $b \in GL_2(L)$ the $\sigma$-stabilizer of $b$ in $GL_2(L)$ is given by

$$J_b = \{g \in GL_2(L) \colon g^{-1}b\sigma(g) = b\}.$$

\begin{Rem} {\rm The group $J_b$ is the group of $F$-valued points of the functor $\tilde{J}_b$ which associates to an $F$-algebra $R$ the group }
$$\tilde{J}_b (R) = \{ g \in GL_2(R \otimes_F L) \colon g^{-1}b\sigma(g) = b\}.$$
{ \rm This functor is representable by a connected reductive group $\tilde{J}_b$ over $F$, which is an inner form of a certain Levi subgroup 
$M_b$ of $GL_2$ attached to $b$ (compare \cite{RZ} (1.12) for the Witt ring case).}
\end{Rem}

The group $J_b$ acts by left multiplication on $X_{\tilde{w}}(b)$. Now we will give the three examples, which will be relevant for us. In the rest of the paper we will 
use the following notation: $$b_1 := \begin{pmatrix} 0 & 1 \\ t & 0 \end{pmatrix}.$$

\noindent \textbf{Examples.} 
\begin{itemize} 
\item[(i)] For $b=1$ we have $J_1 = GL_2(F)$.

\item[(ii)] For $b = \begin{pmatrix} 1 & 0 \\ 0 & t^{\alpha} \end{pmatrix}$ with $\alpha > 0$ we have $J_b = T(F)$.

\item[(iii)] Let $b = b_1$. Then $J_{b_1}$ is the multiplicative group of the quaternion algebra over $F$. 
Let $F \subsetneq E \subsetneq L$ be the unramified extension of $F$ of degree 2. Then
$$J_{b_1} = \left\{  \begin{pmatrix} a & \sigma(c) \\ tc & \sigma(a) \end{pmatrix} \colon a, c \in E, a\sigma(a) - tc\sigma(c) \neq 0\right\}.$$

\noindent We can also write: $$J_{b_1} \cong E[\pi]^{\ast},$$
where $D = E[\pi]$ is the (non-commutative) quaternion algebra over $F$ defined by the relations $\pi^2 = t$ and $a\pi = \pi \sigma(a)$ for all $a \in E$. 
The isomorphism is given by  sending $a \in E$ to $\begin{pmatrix} a & 0 \\ 0 & \sigma(a) \end{pmatrix}$ and $\pi$ to $b_1$.
\end{itemize}

The determinant of matrices in $I$ always has the valuation $0$ and 

$$v_L(\dett(x^{-1}b\sigma(x))) = v_L(\dett(b)).$$ 
From this follows 

\begin{Lm}\label{WMLemma} If $X_{(w,m)}(b)$ is non-empty, then $v_L(\dett(b)) = m$. \end{Lm}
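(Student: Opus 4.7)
The plan is to apply $v_L \circ \dett$ to both sides of the defining equation $x^{-1}b\sigma(x) = y$, where $y \in I\tilde{w}I$ and $\tilde{w}$ is a chosen representative of $(w,m) \in \tilde{W}$ in $N_{GL_2}(T)(L)$. The claim will then drop out by comparing the two sides.

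First, I would analyze the left hand side. We have $\dett(x^{-1}b\sigma(x)) = \dett(x)^{-1} \dett(b) \sigma(\dett(x))$, and since $\sigma$ preserves the valuation on $L$, the contributions of $x$ and $\sigma(x)$ cancel after taking $v_L$. Hence
$$v_L\bigl(\dett(x^{-1}b\sigma(x))\bigr) = v_L(\dett(b)).$$

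Next I would analyze the right hand side. Writing $y = i_1 \tilde{w} i_2$ with $i_1, i_2 \in I$, the fact recalled just before the lemma that every element of $I$ has determinant of valuation $0$ gives $v_L(\dett(y)) = v_L(\dett(\tilde{w}))$. For the representative $\tilde{w}$ I would use the chosen splitting of $\tilde{W} = W_a \rtimes \mathbb{Z}$: take $\tilde{w} = w \cdot \begin{pmatrix} 0 & 1 \\ t & 0 \end{pmatrix}^m$ where $w \in N_{SL_2}(T^{SL_2})(L)$ lifts the $W_a$-component. Since $w$ has determinant in $\{\pm 1\}$ (its class lies in $SL_2(L)$ modulo the torus $T^{SL_2}(\mathfrak{o})$, and $T^{SL_2}(\mathfrak{o})$ has determinant $1$) and $\dett\begin{pmatrix} 0 & 1 \\ t & 0 \end{pmatrix}^m = (-t)^m$, we obtain $v_L(\dett(\tilde{w})) = m$.

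Combining the two computations with the defining equation yields $v_L(\dett(b)) = m$, as required. There is no real obstacle here; the only point that needs a moment of care is checking that $v_L(\dett(\tilde{w})) = m$ is intrinsic to the class $(w,m) \in \tilde{W}$ (independent of the choice of representative), but this follows from the fact that $T(\mathfrak{o})$, by which we quotient to form $\tilde{W}$, consists of matrices with determinant in $\mathfrak{o}^\times$ and hence of valuation $0$. Equivalently, the quotient map $\tilde{W} \twoheadrightarrow \mathbb{Z}$ in the split short exact sequence recalled in Section~2.2 is precisely $v_L \circ \dett$, and it sends $(w,m)$ to $m$.
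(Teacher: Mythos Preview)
Your proof is correct and follows essentially the same approach as the paper: the paper states, just before the lemma, that elements of $I$ have determinant of valuation $0$ and that $v_L(\dett(x^{-1}b\sigma(x))) = v_L(\dett(b))$, and then simply asserts the lemma follows. You have spelled out the remaining step (that a representative of $(w,m)$ has determinant of valuation $m$) in more detail than the paper does, but the argument is the same.
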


\noindent \textbf{Convention.} For transparency, we omit $m$ from the notation and write $X_w(b)$ instead of 
$X_{(w, v_L(\dett(b)))}(b)$ for every $w \in W_a$ and $b \in GL_2(L)$. This notation depends on the chosen splitting $\tilde{W} = W_a \rtimes \mathbb{Z}$.

\begin{Lm}\label{lemma3} 
\mbox{}
\begin{itemize}
\item[(i)] The varieties $X_w(b)$ and $X_w(g^{-1}b\sigma(g))$ are isomorphic. 
\item[(ii)] Let $c = \begin{pmatrix}  t^m & 0 \\ 0 & t^m \end{pmatrix} \in N_{GL_2}(T)(L)$. 
Then $X_w(b)$ and $X_w(cb)$ are equal as subvarieties of $X_{\redd}$.
\end{itemize}
\end{Lm}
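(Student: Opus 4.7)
The plan is to prove both parts by unwinding the definition of $X_w(b)$ together with the convention $X_w(b) = X_{(w,\, v_L(\det b))}(b)$, and exploiting the fact that left-multiplication by an element of $GL_2(L)$ is an automorphism of the Ind-scheme $X_{\redd}$. The key geometric observation is that $c = \begin{pmatrix} t^m & 0 \\ 0 & t^m \end{pmatrix}$ is central in $GL_2(L)$ and equals $\tau^{2m}$ under the splitting $\tilde{W} = W_a \rtimes \mathbb{Z}$ with $\tau = \begin{pmatrix} 0 & 1 \\ t & 0 \end{pmatrix}$; this explains why scaling $b$ by $c$ does not disturb the defining Bruhat cell, only relabels it.

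For (i), I would define $\phi_g \colon X \to X$ by $xI \mapsto g^{-1}xI$, which is an automorphism of Ind-schemes since $g \in GL_2(L)$ acts on $X$ as in Lemma \ref{ReprLemma}. The one-line identity
\[ (g^{-1}x)^{-1} \bigl( g^{-1} b \sigma(g) \bigr) \sigma(g^{-1}x) = x^{-1} b \sigma(x) \]
shows that $\phi_g$ carries the defining condition for $X_w(b)$ to that of $X_w(g^{-1}b\sigma(g))$, provided the extended Weyl element $(w, v_L(\det b))$ is the same on both sides; this is automatic since $\sigma$ preserves $v_L$, so $v_L(\det(g^{-1}b\sigma(g))) = v_L(\det b)$. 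The inverse isomorphism is $\phi_{g^{-1}}$.

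For (ii), the assertion is equality of subvarieties, so I would show that $X_w(b)$ and $X_w(cb)$ are cut out by the same condition on $xI$. Starting from $x^{-1}(cb)\sigma(x) = c \cdot x^{-1}b\sigma(x)$ (using centrality of $c$), I would then observe that, still by centrality,
\[ I(w,v)I \cdot c = I \cdot (w,v)\tau^{2m} \cdot I = I(w, v+2m)I, \]
where the last equality uses $(w,v)\tau^{2m} = (w, v+2m)$ in $W_a \rtimes \mathbb{Z}$. Since $v_L(\det cb) = v_L(\det b) + 2m$, the membership
\[ x^{-1}b\sigma(x) \in I\bigl(w, v_L(\det b)\bigr)I \]
is therefore equivalent, after multiplying by $c$, to
\[ x^{-1}(cb)\sigma(x) \in I\bigl(w, v_L(\det cb)\bigr)I. \]
Both conditions define the same subset of $X_{\redd}$, hence the same reduced subvariety.

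There is no real obstacle here; this is essentially a bookkeeping lemma. The only point demanding care is the convention that hides the $\mathbb{Z}$-component of $\tilde{w}$ inside the notation $X_w(-)$: one must verify that this component is consistent on both sides of each claim, which in (i) is automatic and in (ii) requires matching the shift $v \mapsto v + 2m$ coming from $c = \tau^{2m}$ with the shift in $v_L(\det)$. Once this matching is explicit, both statements are immediate from the centrality of $c$ and the formal compatibility of $\sigma$-twisted conjugation with left translation.
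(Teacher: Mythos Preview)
Your proposal is correct and follows essentially the same route as the paper's proof: for (i) the map $xI \mapsto g^{-1}xI$ together with the identity you display, and for (ii) the centrality of $c$ combined with its image $(1,2m)$ in $\tilde{W}$ to match the shifted Bruhat cell with the shifted determinant valuation. The paper's argument is the same, only slightly more terse.
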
 

\begin{proof}
If $g \in GL_2(L)$, then $x \mapsto g^{-1}x$ is an isomorphism $X_w(b) \rightarrow X_w(g^{-1}b\sigma(g))$. This proves (i). 
To prove (ii), we notice that $c$ is central and the image of $c$ in $\tilde{W}$ is the pair $(1,2m)$. Let $v = v_L(\dett(b))$. 
Now we have: $xI \in X_w(b) \Leftrightarrow x^{-1}b\sigma(x) \in I(w, v)I \Leftrightarrow x^{-1}cb\sigma(x) \in 
I(1,2m)(w,v)I = I(w, v+2m)I$. But $v + 2m = v_L(\dett(cb))$, and thus $xI \in X_w(b)$ is equivalent 
to $xI \in X_{w}(cb)$.
\end{proof}

Now we show the connection between $X_w(b)$ and affine Deligne-Lusztig varieties in the affine flag manifold of $SL_2(L)$. We use the 
following 

\begin{Def}\label{xsld} Let $w \in W_a$, $b \in GL_2(L)$ and $\tilde{w} = (w,v_L(\dett(b))) \in \tilde{W}$. The affine Deligne-Lusztig variety for $SL_2(L)$ is the 
locally closed subset of $X^{SL_2}$ defined by $$X_w^{SL_2}(b) = \{ xI^{SL_2} \in SL_2(L)/I^{SL_2} \colon x^{-1}b \sigma(x) \in I\tilde{w}I\},$$ 
provided with its reduced subscheme structure. \end{Def}

That $X^{SL_2}_w(b)$ is indeed locally closed will follow from  the results of the next two sections.
Now, we want to understand Definition \ref{xsld} better.  We see $X^{SL_2}_w(b)(\bar{k})$ as a set of alcoves 
in $\build$. 

\begin{Lm}\label{SLGLident} Consider the situation as in the Definition \ref{xsld}. An alcove $D$ lies 
in $X^{SL_2}_w(b)(\bar{k})$ if and only if $\inv(D, b\sigma D) = w$.
\end{Lm}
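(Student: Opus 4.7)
The plan is to translate the defining condition $x^{-1}b\sigma(x)\in I\tilde{w}I$ into one about the $I^{SL_2}$-Bruhat decomposition of $SL_2(L)$, which is exactly what $\inv$ records. I first write $D = xC_M^0$ with $x \in SL_2(L)$ and set $g := x^{-1}b\sigma(x)$; by Lemma~\ref{WMLemma} and the determinant identity preceding it, $v_L(\dett g) = v := v_L(\dett b)$. The relative position $\inv$ is immediately invariant under the diagonal $SL_2(L)$-action on pairs of alcoves (directly from $y_1^{-1}y_2 \in I^{SL_2}wI^{SL_2}$), so multiplying both arguments by $x^{-1}$ gives
\[\inv(D,b\sigma D)=\inv(xC_M^0,b\sigma(x)C_M^0)=\inv(C_M^0,gC_M^0).\]
Thus the lemma reduces to: for every $g\in GL_2(L)$ with $v_L(\dett g)=v$, $g\in I\tilde{w}I$ iff $\inv(C_M^0,gC_M^0)=w$.

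I will lift $\tilde{w}=(w,v)$ to the representative $\dot w\cdot b_1^v$, where $\dot w\in N_{SL_2}(T^{SL_2})(L)$ represents $w\in W_a$. Three structural inputs, each verified by short direct computations, are needed: the element $b_1$ preserves the alcove $C_M^0$ of $\build$ (it exchanges its two vertices $\mathfrak{o}\oplus\mathfrak{o}$ and $\mathfrak{o}\oplus t\mathfrak{o}$); $b_1$ normalizes $I$; and the full $GL_2(L)$-stabilizer of $C_M^0$ as an alcove equals $b_1^{\mathbb{Z}}\cdot I$. By transitivity of $SL_2(L)$ on alcoves, I write $gC_M^0=zC_M^0$ for some $z\in SL_2(L)$; then $z^{-1}g$ lies in this stabilizer with $v_L(\dett)=v$, which forces $g=z\cdot b_1^v\cdot i$ for some $i\in I$. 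A direct double-coset manipulation using $b_1^v I b_1^{-v}=I$ then shows $g\in I\dot w\, b_1^v\, I$ iff $z\in I\dot w I$.

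The last ingredient is the identity $SL_2(L)\cap I\dot w I = I^{SL_2}\dot w I^{SL_2}$, which I will prove using the decomposition $I=T(\mathfrak{o})I^{SL_2}$, the fact that $\dot w$ normalizes $T(\mathfrak{o})$ (since $\dot w\in N_{GL_2}(T)(L)$), and the constraint $\dett z=1$, allowing one to absorb both $T(\mathfrak{o})$-factors. Combining everything: $g\in I\tilde{w}I$ iff $z\in I^{SL_2}\dot w I^{SL_2}$ iff $\inv(C_M^0,zC_M^0)=w$ by the very definition of $\inv$; and since $zC_M^0=gC_M^0$, this completes the proof.

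The main obstacle I expect is the structural fact that the $GL_2(L)$-stabilizer of $C_M^0$ as an unoriented alcove is the enlarged group $b_1^{\mathbb{Z}}I$ rather than just $I$: the element $b_1$ stabilizes the alcove by flipping its two type-classified vertices, an operation invisible to $SL_2(L)$ (which preserves vertex types). This enlargement is exactly what produces the $b_1^v$-factor in the representative of $\tilde{w}$ and bridges the $GL_2$- and $SL_2$-pictures.
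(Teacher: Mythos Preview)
Your proof is correct and follows essentially the same approach as the paper's. Both arguments hinge on the facts that $b_1$ stabilizes $C_M^0$ and normalizes $I$, and both reduce the statement to the identity $SL_2(L)\cap I\dot w I = I^{SL_2}\dot w I^{SL_2}$, which is proved in each case by the same determinant-adjustment trick using that $\dot w$ normalizes $T(\mathfrak{o})$; your presentation simply packages the first step more structurally by explicitly naming the $GL_2(L)$-stabilizer of $C_M^0$ as $b_1^{\mathbb{Z}}\cdot I$, whereas the paper writes down the $SL_2$-representative $b\sigma(x)b_1^{-m}$ of $b\sigma D$ directly.
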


\begin{proof} 
For better readability we assume that $\dett(b) = t^m$ for some $m \in \mathbb{Z}$. This is in fact the only interesting case for the future. 
The proof without this assumption is very similar. We fix a representative of $w$ in $N_{SL_2}(T^{SL_2})(L) \subseteq N_{GL_2}(T)(L)$ and denote 
it again by $w$. Let $D = xC^0_M$ with $x \in SL_2(L)$. Since $b_1 C_M^0 = C_M^0$ we have: 
$b\sigma D = b\sigma(x)b_1^{-m} C^0_M$ and $b\sigma(x)b_1^{-m} \in SL_2(L)$. Since $b_1 I^{SL_2}b_1^{-1} = I^{SL_2}$, we have:
\begin{eqnarray*} \inv(D, b\sigma D) = w &\Leftrightarrow& x^{-1}b\sigma(x) b_1^{-m} \in I^{SL_2}wI^{SL_2} \Leftrightarrow x^{-1}b\sigma(x) \in I^{SL_2}w b_1^m I^{SL_2} \Rightarrow \\ 
 &\Rightarrow& x^{-1}b\sigma(x) \in Iw b_1^m I = I \tilde{w} I \Leftrightarrow D \in X^{SL_2}_w(b)(\bar{k}).\end{eqnarray*}
Now we have to prove the converse of the third inclusion. Since $b_1$ normalizes $I$ and $I^{SL_2}$, it is enough to show that 
$IwI \cap SL_2(L) \subseteq I^{SL_2} w I^{SL_2}$. If $iwj \in IwI \cap SL_2(L)$, then $\dett(i) = \dett(j)^{-1} \in \mathfrak{o}^{\times}$ 
(since $\dett(w)=1$ and $i,j \in I$). Let $r = \begin{pmatrix} \dett(i) & 0 \\ 0 & 1\end{pmatrix}$. Now, $w$ normalizes $T(\mathfrak{o})$, and thus 
$r w = w r^{\prime}$ for some $r^{\prime} \in T(\mathfrak{o})$. Hence $iwj = ir^{-1} w r^{\prime} j$ where $\dett(ir^{-1}) = \dett(w) = 1$ and hence 
$\dett(r^{\prime} j) = 1$. Moreover, since $r,r^{\prime} \in T(\mathfrak{o})$, we have $ir^{-1}, r^{\prime} j \in I$. 
Thus $ir^{-1} , r^{\prime}j \in I^{SL_2}$ and $iwj = ir^{-1} w r^{\prime} j \in I^{SL_2}wI^{SL_2}$.
\end{proof}

Thus if $b \in SL_2(L)$, then $X^{SL_2}_w(b)$ is the usual affine Deligne-Lusztig variety inside the flag manifold of $SL_2$, attached to $b$ and $w$. 
If $b, b^{\prime} \in GL_2(L)$ are $\sigma$-conjugate under $SL_2(L)$ and $w \in W_a$, then the varieties $X_w^{SL_2}(b)$ and $X_w^{SL_2}(b^{\prime})$ 
are isomorphic as subvarieties of $X^{SL_2}$. Indeed if $g \in SL_2(L)$, then $x \mapsto g^{-1}x$ is an isomorphism $X_w^{SL_2}(b) 
\rightarrow X_w^{SL_2}(g^{-1}b\sigma(g))$.

The group $$J^{SL_2}_b = J_b \cap SL_2(L)$$ acts on $X^{SL_2}_w(b)$ by left multiplication. To express $X_w(b)$ in terms of $X_w^{SL_2}(b)$, we need 
the following

\begin{Lm}\label{LmSurj} Let $b \in GL_2(L)$. The restriction of $v_L \circ \dett: GL_2(L) \rightarrow \mathbb{Z}$ to $J_b$ is surjective. \end{Lm}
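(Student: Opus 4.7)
The plan is to reduce to the three standard representatives of $\sigma$-conjugacy classes modulo the center listed in the Examples, and then verify the surjectivity in each case.

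First I would record two invariance properties. If $b' = g^{-1} b \sigma(g)$ for some $g \in GL_2(L)$, then $J_{b'} = g^{-1} J_b g$, so the images of $v_L \circ \dett$ restricted to $J_b$ and to $J_{b'}$ agree (ordinary conjugation preserves $v_L \circ \dett$). Second, for any scalar matrix $z \in GL_2(L)$ one has $J_{zb} = J_b$, since $g^{-1}(zb)\sigma(g) = z \cdot g^{-1} b \sigma(g)$. By the Dieudonn\'e--Manin / Kottwitz classification of $\sigma$-conjugacy classes for $GL_2$, every class modulo central multiplication is represented by one of $b = 1$, $b = \begin{pmatrix}1 & 0 \\ 0 & t^\alpha \end{pmatrix}$ with $\alpha > 0$, or $b = b_1$, i.e.\ the three cases treated in Examples (i)--(iii).

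In each of these cases the surjectivity is immediate. For $b = 1$ we have $J_1 = GL_2(F)$ and $\begin{pmatrix} t^n & 0 \\ 0 & 1 \end{pmatrix} \in J_1$ with $v_L \circ \dett$ equal to $n$. For $b = \begin{pmatrix}1 & 0 \\ 0 & t^\alpha \end{pmatrix}$ we have $J_b = T(F) = F^\times \times F^\times$, and the same diagonal matrices already lie in $T(F)$. For $b = b_1$, the isomorphism $J_{b_1} \cong D^\times$ from Example (iii) identifies $\dett$ with the reduced norm $\Nrd \colon D^\times \to F^\times$, which is surjective by the structure theory of central simple algebras over local fields, so $v_L \circ \Nrd$ is surjective onto $\mathbb{Z}$. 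Concretely, $b_1 \in GL_2(F)$ forces $\sigma(b_1) = b_1$, so $b_1 \in J_{b_1}$ with $v_L(\dett b_1) = v_L(-t) = 1$; together with the scalar matrices $\begin{pmatrix} t^n & 0 \\ 0 & t^n \end{pmatrix} \in J_{b_1}$, which realize every even integer, this yields the full image $\mathbb{Z}$.

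The main obstacle is the reduction step, which appeals to the classification of $\sigma$-conjugacy classes in $GL_2(L)$: one has to check that the three families of representatives in the Examples exhaust all classes modulo the center. This follows from the Dieudonn\'e--Manin decomposition of the two-dimensional $F$-isocrystal attached to $b$, and is consistent with the description of $J_b$ as an inner form of a Levi subgroup of $GL_2$ given in the Remark of Section 2.4. Everything else is a routine verification.
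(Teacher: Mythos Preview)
Your proof is correct and follows essentially the same route as the paper's: reduce via the invariance of $v_L\circ\dett$ under conjugation and the equality $J_{zb}=J_b$ for central $z$ to a finite list of representatives of $\sigma$-conjugacy classes, then exhibit explicit elements in each case. The paper cites \cite{RR}~1.10 for the list of representatives and treats $b=1$ together with the diagonal case (allowing $\alpha\ge 0$); for $b=b_1$ it uses $b_1^v\in J_{b_1}$ directly, which is the same observation as your $b_1\in J_{b_1}$ with $v_L(\dett b_1)=1$.
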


\begin{proof} If $b,c \in GL_2(L)$ are $\sigma$-conjugate, then the groups $J_b$ and $J_c$ are conjugate in $GL_2(L)$. Since conjugation
 does not change the determinant, it is enough to prove the statement for some representatives of the $\sigma$-conjugacy classes of $GL_2(L)$. Those are
 given by 
\begin{equation}\label{conjclasses} \left\{ \begin{pmatrix} t^\alpha & 0 \\ 0 & t^\beta \end{pmatrix} \colon \alpha \leq \beta \right\} \cup \left\{ \begin{pmatrix}  0 &
 t^{\alpha - 1} \\ t^\alpha & 0 \end{pmatrix} \colon \alpha \text{ odd} \right\}, \end{equation}
which follows from (\cite{RR} 1.10).

The group $J_b$ stays unchanged if we multiply $b$ by some central element of $GL_2(L)$. Therefore, we have only two cases, in which we can prove
 the statement explicitly.

First case: $b = \begin{pmatrix} 1 & 0 \\ 0 & t^{\alpha} \end{pmatrix}$ and $\alpha \geq 0$. Then for all $v \in \mathbb{Z}$ we have: $\begin{pmatrix} 
1 & 0 \\ 0 & t^v \end{pmatrix} \in J_b$ and $v_L(\dett\begin{pmatrix} 1 & 0 \\ 0 & t^v \end{pmatrix}) = v$.

Second case: $b = b_1$. In this case we have $b_1^v \in J_b$, and $v_L(\dett(b_1^v)) = v$ for all $v \in \mathbb{Z}$.
\end{proof}

We proved the surjectivity of the homomorphism $v_L \circ \dett : J_b \longrightarrow \mathbb{Z}$ for any $b \in GL_2(L)$. 
Let us now introduce its kernel.

\begin{Def} For $b \in GL_2(L)$ set $$H_b = \Kerr(v_L \circ \dett : J_b \longrightarrow \mathbb{Z}).$$ \end{Def}

Now $H_b \subseteq H$ where $H \subseteq GL_2(L)$ is the subgroup of all matrices with valuation of the determinant equal zero. $H_b$ is exactly 
the stabilizer of $X_w(b) \cap (X^{(0)})_{\redd}$ under the action of $J_b$ on $X_w(b)$. Recall that in the last subsection we identified 
$X^{SL_2}$ with $(X^{(0)})_{\redd}$. Via this identification $H_b$ acts on $X^{SL_2}_w(b) = X_w(b) \cap (X^{(0)})_{\redd}$, and this action 
is the restriction of the action of $H$ on $X^{SL_2}$.

\begin{Prop}\label{LmGLSL} We have 
$$X_w(b) \cong \coprod_{J_b/H_b} X^{SL_2}_w(b)$$
as $k$-varieties, and the $J_b$-action on the set of these components is given by left multiplication on the index set. 
\end{Prop}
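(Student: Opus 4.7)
The plan is to apply Lemma \ref{ReprLemma} with a system of representatives chosen to lie inside $J_b$. By Lemma \ref{LmSurj}, the map $v_L\circ\dett : J_b \to \mathbb{Z}$ is surjective with kernel $H_b$, giving $J_b/H_b \xrightarrow{\sim} \mathbb{Z}$. For each $v\in\mathbb{Z}$ choose $g_v\in J_b$ with $v_L(\dett g_v) = v$, and set $R = \{g_v : v \in \mathbb{Z}\}$. Lemma \ref{ReprLemma} then supplies an Ind-scheme isomorphism $\alpha_R : \coprod_v X^{SL_2} \xrightarrow{\sim} X_{\redd}$ carrying $xI^{SL_2}$ in the $v$-th summand to $g_v xI$.

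The heart of the proof is to check that $\alpha_R$ restricts to an isomorphism $\coprod_v X^{SL_2}_w(b) \xrightarrow{\sim} X_w(b)$. This is a direct calculation using $g_v \in J_b$: since $g_v^{-1} b \sigma(g_v) = b$, for every $x \in SL_2(L)$ one has
$$(g_v x)^{-1}\, b\, \sigma(g_v x) = x^{-1}\bigl(g_v^{-1} b \sigma(g_v)\bigr)\sigma(x) = x^{-1} b \sigma(x).$$
Writing $\tilde w = (w, v_L(\dett b))$, this shows $g_v xI \in X_w(b)$ if and only if $xI^{SL_2} \in X^{SL_2}_w(b)$. Conversely, given $yI \in X_w(b) \cap (X^{(v)})_{\redd}$, the element $g_v^{-1} y$ has trivial valuation of determinant, so it lies in $H$; using $H = SL_2(L)\cdot I$ one writes $g_v^{-1}y = x' i$ with $x'\in SL_2(L)$ and $i \in I$, and the same identity applied to $x'$ forces $x'I^{SL_2}\in X^{SL_2}_w(b)$ with $\alpha_R(x'I^{SL_2}) = yI$. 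Since $\alpha_R$ is an isomorphism of reduced Ind-schemes, this identifies $X_w(b) \cong \coprod_v X^{SL_2}_w(b)$ as $k$-varieties.

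For the action statement, the $g_v$ are representatives of $J_b/H_b$, so the indexing set $\mathbb{Z}$ is naturally identified with $J_b/H_b$. For $g\in J_b$ with $v_L(\dett g) = v'$, left multiplication by $g$ sends $X_w(b) \cap (X^{(v)})_{\redd}$ to $X_w(b) \cap (X^{(v+v')})_{\redd}$; in the $J_b/H_b$-indexing this is left multiplication by the coset $gH_b$, because $gg_v$ and $g_{v+v'}$ both have determinant valuation $v+v'$ and hence $g g_v H_b = g_{v+v'} H_b$. No step is genuinely hard; the main subtlety is the bookkeeping to translate between the $GL_2$- and $SL_2$-cosets, which is cleanly handled above by using $H = SL_2(L)\cdot I$ to lift $g_v^{-1} y \in H$ back to an element of $SL_2(L)$.
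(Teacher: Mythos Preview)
Your proof is correct and follows essentially the same approach as the paper: choose representatives $R \subset J_b$ via Lemma \ref{LmSurj}, apply Lemma \ref{ReprLemma}, and identify the components with $J_b/H_b$. You supply more detail than the paper does on why $\alpha_R$ actually restricts to an isomorphism onto $X_w(b)$ (the paper simply asserts this), making explicit the key identity $(g_v x)^{-1}b\sigma(g_v x)=x^{-1}b\sigma(x)$ coming from $g_v\in J_b$, and handling the converse via $H=SL_2(L)\cdot I$.
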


\begin{proof} The scheme structure on $X_w(b)$ is the reduced one, thus the inclusion $X_w(b) \subset X$ factorizes through $X_{\redd} \rightarrow X$. 
By Lemma \ref{LmSurj}, we can choose $R = (r_v) \subseteq J_b$ with $r_0 = 1$ and $v_L(\dett(r_v)) = v$ for all $v \in \mathbb{Z}$, 
and $\alpha_R$ from Lemma \ref{ReprLemma} restricts to the isomorphism: 
$$X_w(b) \cong \coprod_{v \in \mathbb{Z}} X^{SL_2}_w(b).$$
Now the action of $J_b$ permutes these components and $H_b$ is exactly the stabilizer of the component corresponding to $r_0 = 1$. Again, by Lemma 
\ref{LmSurj}, $J_b$ acts transitively on the set of these components. 
\end{proof}

In particular, $X_w(b)$ is non-empty if and only if $X^{SL_2}_w(b)$ is. We clearly have $J^{SL_2}_b \subseteq H_b$. 

By Lemma \ref{lemma3}(i), to determine $X_w(b)$ for all $b \in GL_2(L)$ and all $w \in W_a$, it is enough to do so for all $b$ lying in a fixed set 
of representatives of the $\sigma$-conjugacy classes of $GL_2(L)$ (compare (\ref{conjclasses})). By Lemma \ref{lemma3}(ii) it is enough to consider 
the following three cases:
\begin{itemize}
\item[(i)] $b = 1$;
\item[(ii)] $b = \begin{pmatrix} 1 & 0 \\ 0 & t^{\alpha} \end{pmatrix}$ and $\alpha > 0$;
\item[(iii)] $b = b_1$. 
\end{itemize}

\noindent By Proposition \ref{LmGLSL}, $X_w(b)$ is the disjoint union of $\mathbb{Z}$ copies of $X^{SL_2}_w(b)$. In Lemma \ref{SLGLident}  we showed that 
 $X^{SL_2}_w(b)(\bar{k})$ is the set of all alcoves in $\build$ with $\inv(D, b \sigma D) = w$. 
In the following, we will determine $X^{SL_2}_w(b)(\bar{k})$.

\section{The sets $X^{SL_2}_w(b)(\bar{k})$}
\subsection{The vertex of departure} \mbox{}

\begin{Lm}\label{MinGal}
Let $\mathfrak{C}$ be a full connected subcomplex of $\build$. Let $D$ be an alcove 
in $\build$, which is not contained in $\mathfrak{C}$. Then there is a unique gallery $\Gamma_{D, \mathfrak{C}}$ with minimal length in $\build$, 
containing a vertex $P_D$ in $\mathfrak{C}$, whose first alcove is $D$. This vertex $P_D$ is uniquely determined by $D$.
\end{Lm}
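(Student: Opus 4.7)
The plan is to use the tree structure of $\build$ together with the fact that $\mathfrak{C}$, being a full connected subcomplex of a tree, is itself a subtree: for any two vertices of $\mathfrak{C}$, the unique gallery in $\build$ connecting them lies in $\mathfrak{C}$. I would set $n = \min\{d(D,V) : V \text{ a vertex of } \mathfrak{C}\}$, which exists since $\mathfrak{C}$ is non-empty and distances are non-negative integers; fix a vertex $P_D \in \mathfrak{C}$ realizing this minimum; and let $\Gamma_{D,\mathfrak{C}}$ be the gallery in $\build$ stretched from $D$ to $P_D$, uniquely determined by the tree property and of length $n$. By the definition of $n$, no gallery with first alcove $D$ containing a vertex of $\mathfrak{C}$ can be shorter.

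Next, I would show that any minimal-length gallery $\Gamma = (D = C_0, C_1, \ldots, C_n)$ with first alcove $D$ containing some vertex $P \in \mathfrak{C}$ is automatically stretched from $D$ to $P$. If $P$ were a vertex of some $C_i$ with $i < n$, the sub-gallery $(C_0, \ldots, C_i)$ would be strictly shorter and still contain $P$, contradicting minimality; combined with the non-stuttering description of minimal galleries (Lemma \ref{MinimalityLm}(i)), $P$ must therefore be the unique last vertex of $\Gamma$. Since stretched galleries in a tree are unique, $\Gamma$ is completely determined by the pair $(D, P)$, so uniqueness of $\Gamma_{D, \mathfrak{C}}$ reduces to uniqueness of $P_D$.

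For uniqueness of $P_D$, I would argue by contradiction, splitting into two cases. If $n = 0$, two distinct candidates would both be vertices of $D$, and fullness of $\mathfrak{C}$ would force $D \in \mathfrak{C}$, contradicting the hypothesis. If $n > 0$, let $\Gamma_1$ be the stretched gallery from $D$ to $P_1$ and $\Gamma'$ the stretched gallery from $P_1$ to $P_2$. Because $\mathfrak{C}$ is a connected subcomplex of a tree, every alcove of $\Gamma'$ lies in $\mathfrak{C}$; conversely, if some alcove $C_i$ of $\Gamma_1$ lay in $\mathfrak{C}$, the subcomplex property would put both its vertices in $\mathfrak{C}$, and at least one of them would appear in a sub-gallery of length less than $n$, contradicting minimality of $n$. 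Hence $\Gamma_1$ and $\Gamma'$ share no alcove, and the last vertex of $\Gamma_1$ coincides with the first vertex of $\Gamma'$, so Lemma \ref{MinimalityLm}(ii) produces a minimal composite gallery of length $n + \ell(\Gamma') + 1 > n$ ending at $P_2$, contradicting $d(D, P_2) = n$.

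The hard part is this uniqueness step: checking the hypotheses of Lemma \ref{MinimalityLm}(ii), in particular that no alcove of $\Gamma_1$ meets $\mathfrak{C}$, requires carefully intertwining the tree structure, the subcomplex property, and the minimality of $n$; the edge case $n = 0$ is where fullness is genuinely needed. Once $P_D$ is known to be unique, the rest of the statement is immediate.
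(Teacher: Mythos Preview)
Your proof is correct and follows essentially the same route as the paper's: both construct the composite of the gallery from $D$ to one candidate vertex with the gallery inside $\mathfrak{C}$ connecting the two candidates, verify the hypotheses of Lemma~\ref{MinimalityLm}(ii), and derive a contradiction from the tree property. The only cosmetic differences are that you derive the contradiction via a length comparison whereas the paper observes that the shorter gallery would then contain two vertices of $\mathfrak{C}$, and that you single out the $n=0$ case explicitly (invoking fullness directly) while the paper absorbs it into the general argument.
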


\begin{proof}
The existence of such a gallery follows from the connectedness of $\build$. A gallery with given properties clearly contains exactly one 
vertex lying in $\mathfrak{C}$. Such a gallery is uniquely determined by this vertex since $\build$ is a tree.

To prove the uniqueness of $\Gamma_{D,\mathfrak{C}}$, assume $\Gamma_{D, \mathfrak{C}}^{\prime}$ is an other gallery with such properties stretched from $D$ to a vertex 
$P_D^{\prime}$ in $\mathfrak{C}$. By the connectedness of $\mathfrak{C}$, there would be the minimal gallery $\Gamma_{PP^\prime}$ 
stretched from $P_D$ to $P_D^{\prime}$, lying in $\mathfrak{C}$. Consider the composite gallery $(\Gamma_{D,\mathfrak{C}}, \Gamma_{PP^\prime})$. 
It is minimal by Lemma \ref{MinimalityLm}. In fact, $\Gamma_{D,\mathfrak{C}}$ and $\Gamma_{PP^{\prime}}$ are minimal, $\Gamma_{D,\mathfrak{C}}$ has no alcoves 
in $\mathfrak{C}$ and all alcoves of $\Gamma_{PP^{\prime}}$ lie in $\mathfrak{C}$: thus they have no common alcoves and $P_D$ is a last vertex of 
$\Gamma_{D,\mathfrak{C}}$ and a first vertex of $\Gamma_{PP^{\prime}}$.

Thus there are two minimal galleries, $\Gamma_{D, \mathfrak{C}}^{\prime}$ and $(\Gamma_{D,\mathfrak{C}}, \Gamma_{PP^\prime})$ stretched from $D$ to $P^{\prime}_D$. 
Since $\build$ is a tree, these galleries coincide. Since $\Gamma_{D, \mathfrak{C}}^{\prime}$ contains exactly one vertex in $\mathfrak{C}$, this implies
 $P_D = P^{\prime}_D$. Therefore $\Gamma_{D,\mathfrak{C}} = \Gamma_{D, \mathfrak{C}}^{\prime}$.
\end{proof}

\begin{Def} 
Let $\mathfrak{C}$ be a full connected subcomplex of $\build$. Let $D$ be an alcove in $\build$, which is not contained in $\mathfrak{C}$. 
Let $\Gamma_{D, \mathfrak{C}}$ and $P_D$ be as in Lemma \ref{MinGal}. We call $P_D$ the vertex of departure for $D$ from $\mathfrak{C}$. 
Set further $$d_{\mathfrak{C}}(D) = 1 + \ell(\Gamma_{D,\mathfrak{C}}).$$
\end{Def} 

Now we introduce some notations which we will need in the following.

\begin{Nott}\label{notb1} Let $\mathfrak{C}$ be a full connected subcomplex of $\build$.

\begin{itemize}
\item[(i)] For $m \in \{0, 1\}$ let $\mathfrak{C}^{(m)}$ denote the set of all vertices in $\mathfrak{C}$ with type $m$.

\item[(ii)] For a vertex $P$ in $\mathfrak{C}$ and $n > 0$, set 
$$D^n_{\mathfrak{C}}(P) = \left\{ D \colon \begin{array}{cl} D\text{ is an alcove in }\build \text{ with }P \text{ as }\cr \text{vertex of departure from } \mathfrak{C} \text{ and } 
d_\mathfrak{C}(D) = n\end{array} \right\}.$$ 
\end{itemize}
\end{Nott}

\begin{Lm}\label{miini} Let $\mathfrak{C}$ be a full connected subcomplex of $\build$ and let $D$ be an alcove in $\build$, not lying in $\mathfrak{C}$. Let further 
$\gamma$ be an automorphism of $\build$ (as a simplicial complex), and assume that $\gamma$ stabilizes the subcomplex $\mathfrak{C}$. Then 
$\Gamma_{\gamma D, \mathfrak{C}} = \gamma \Gamma_{D,\mathfrak{C}}$.
\end{Lm}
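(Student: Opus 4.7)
The plan is to invoke the uniqueness clause of Lemma \ref{MinGal} applied to the alcove $\gamma D$. Everything will follow from the fact that a simplicial automorphism preserves the data entering the definition of $\Gamma_{D,\mathfrak{C}}$.

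First I would observe that $\gamma D$ is not contained in $\mathfrak{C}$: indeed, if it were, then applying $\gamma^{-1}$ (which also stabilizes $\mathfrak{C}$ since $\gamma$ is an automorphism) would place $D$ in $\mathfrak{C}$, contradicting the hypothesis on $D$. Hence Lemma \ref{MinGal} applies to $\gamma D$, giving a unique gallery $\Gamma_{\gamma D, \mathfrak{C}}$ of the prescribed type.

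Next, I would verify that $\gamma \Gamma_{D,\mathfrak{C}}$ is itself a candidate for $\Gamma_{\gamma D, \mathfrak{C}}$. Since $\gamma$ is a simplicial automorphism, it carries the gallery $\Gamma_{D,\mathfrak{C}}$ (a sequence of pairwise adjacent alcoves) to another gallery of the same length, with first alcove $\gamma D$ and containing the vertex $\gamma P_D$, which lies in $\mathfrak{C}$ because $\gamma$ stabilizes $\mathfrak{C}$.

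The only nontrivial point is minimality: I need that $\gamma \Gamma_{D,\mathfrak{C}}$ has minimal length among galleries starting at $\gamma D$ and containing a vertex of $\mathfrak{C}$. If some gallery $\Gamma'$ with these properties were shorter, then $\gamma^{-1}\Gamma'$ would be a shorter gallery starting at $D$ and containing a vertex of $\mathfrak{C}$ (again using that $\gamma^{-1}$ stabilizes $\mathfrak{C}$), contradicting the minimality of $\Gamma_{D,\mathfrak{C}}$. Therefore $\gamma \Gamma_{D,\mathfrak{C}}$ satisfies all the defining properties of $\Gamma_{\gamma D, \mathfrak{C}}$, and the uniqueness asserted in Lemma \ref{MinGal} forces the equality $\Gamma_{\gamma D, \mathfrak{C}} = \gamma \Gamma_{D,\mathfrak{C}}$. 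There is no real obstacle here; the statement is essentially a functoriality assertion that follows immediately once the minimality transfer via $\gamma^{-1}$ is spelled out.
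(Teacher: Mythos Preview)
Your proof is correct and follows essentially the same approach as the paper: verify that $\gamma\Gamma_{D,\mathfrak{C}}$ satisfies the characterizing properties of Lemma~\ref{MinGal} for the alcove $\gamma D$, then invoke uniqueness. The paper phrases the minimality step slightly more tersely (``since $\gamma$ is invertible, it also inherits minimality of galleries''), whereas you spell out the contradiction via $\gamma^{-1}$; both amount to the same observation.
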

\begin{proof} As an automorphism of a simplicial complex $\gamma$ inherits adjacency, and thus takes galleries to galleries. Since $\gamma$ is invertible,
it also inherits minimality of galleries. Thus $\gamma \Gamma_{D,\mathfrak{C}}$ is minimal. The first alcove of $\gamma \Gamma_{D,\mathfrak{C}}$ 
is $\gamma D$. Further, $\gamma$ stabilizes the set of vertices lying in $\mathfrak{C}$ and thus also the set of vertices not lying in $\mathfrak{C}$. 
Hence $\gamma \Gamma_{D,\mathfrak{C}}$ contains exactly one vertex in $\mathfrak{C}$. This vertex is the image of a last vertex of 
$\Gamma_{D,\mathfrak{C}}$, and thus itself a last vertex of $\gamma \Gamma_{D,\mathfrak{C}}$. Thus $\gamma \Gamma_{D,\mathfrak{C}}$ has all properties 
of Lemma \ref{MinGal} which uniquely charcterize $\Gamma_{\gamma D,\mathfrak{C}}$. The Lemma follows.
\end{proof}


\subsection{The first case: the sets $X^{SL_2}_w(1)(\bar{k})$}\label{firstcasecount} \mbox{}

Let $D$ be an alcove in $\build$ which is not contained in $\ratbuild$. Apply Lemma \ref{MinGal} to the full connected subcomplex 
$\mathfrak{C} = \ratbuild$ and $D$. Thus there is a unique minimal gallery $\Gamma_{D,\ratbuild}$ stretched from $D$ to a uniquely determined vertex 
$P_D$ in $\ratbuild$ which is the vertex of departure for $D$ from $\ratbuild$. 

\begin{Lm}\label{sigmalm} We have: $\Gamma_{\sigma D, \ratbuild} = \sigma \Gamma_{D,\ratbuild}$.
\end{Lm}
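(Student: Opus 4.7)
The plan is to deduce Lemma \ref{sigmalm} directly from the general statement Lemma \ref{miini}, by taking $\mathfrak{C} = \ratbuild$ and $\gamma = \sigma$. So the only thing to check is that $\sigma$ satisfies the hypotheses of Lemma \ref{miini}: it must act as a simplicial automorphism of $\build$ that stabilizes the subcomplex $\ratbuild$ (and sends the alcove $D$, which does not lie in $\ratbuild$, to the alcove $\sigma D$, which also does not lie in $\ratbuild$).

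First I would recall that $\sigma$ acts on $L = \bar{k}((t))$ and preserves the subring $\mathfrak{o} = \bar{k}[[t]]$, so it permutes the set of $\mathfrak{o}$-lattices in $L^2$ and is compatible with scalar equivalence; hence it induces a map on the set of vertices of $\build$. Inclusion of lattices is preserved, so adjacency of alcoves is preserved, and $\sigma$ is indeed a simplicial automorphism of $\build$. It is already recorded in the notational setup that $\ratbuild \subseteq \build$ is exactly the fixed locus of $\sigma$; in particular $\sigma$ stabilizes $\ratbuild$ as a subcomplex, and since $\sigma$ is an automorphism, an alcove $D$ lies outside $\ratbuild$ if and only if $\sigma D$ does.

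With these hypotheses verified, Lemma \ref{miini} applies verbatim and gives $\Gamma_{\sigma D, \ratbuild} = \sigma \Gamma_{D, \ratbuild}$. I do not expect any serious obstacle here; the content is entirely in Lemma \ref{miini}, and this lemma is essentially a naturality statement under the Galois action that will be used later to compare the vertex of departure of $D$ with that of $\sigma D$ when computing $\inv(D, b\sigma D)$ for $b = 1$.
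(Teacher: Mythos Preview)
Your proposal is correct and takes exactly the same approach as the paper: the paper's proof is the single sentence ``Follows from Lemma \ref{miini}, applied to the automorphism given by $\sigma$.'' Your additional verification that $\sigma$ is a simplicial automorphism of $\build$ stabilizing $\ratbuild$ is fine and makes explicit what the paper leaves implicit.
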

\begin{proof} Follows from Lemma \ref{miini}, applied to the automorphism given by $\sigma$.  
\end{proof}

Now we construct the minimal gallery stretched from $D$ to $\sigma D$.
Observe that the vertices of departure for $D$ and for $\sigma D$ from $\ratbuild$ are equal (the one is the image under $\sigma$ of the other 
and both are in $\ratbuild$, thus stable under $\sigma$). Thus we get a gallery connecting $D$ and $\sigma D$, which consists of two parts: the first part 
is $\Gamma_{D, \ratbuild}$ and the second part is the gallery stretched from $P_D$ to $\sigma D$ i.e.  
$\Gamma_{\sigma D, \ratbuild}^{-1} = \sigma \Gamma_{D,\ratbuild}^{-1}$. 
We denote the composite gallery by $$\Gamma_D := (\Gamma_{D,\ratbuild}, \sigma \Gamma_{D,\ratbuild}^{-1}).$$ 

\begin{Lm}\label{CompGal} The gallery $\Gamma_D$ is minimal and $\ell(\Gamma_D) = 2d_{\ratbuild}(D) - 1 $.\end{Lm}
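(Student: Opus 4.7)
My plan is to verify the hypotheses of Lemma \ref{MinimalityLm}(ii) for the composite $\Gamma_D = (\Gamma_{D,\ratbuild},\,\sigma\Gamma_{D,\ratbuild}^{-1})$ and then read off the length from the additivity formula $\ell(\Gamma,\Gamma') = \ell(\Gamma) + \ell(\Gamma') + 1$. Condition (d) is immediate: the unique last vertex of the stretched gallery $\Gamma_{D,\ratbuild}$ is $P_D$ by definition, and since $P_D \in \ratbuild$ is fixed by $\sigma$, the first vertex $\sigma P_D$ of $\sigma\Gamma_{D,\ratbuild}^{-1}$ coincides with $P_D$.

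The real content is condition (c), that the two halves share no alcove. As a preliminary observation I will show that \emph{no alcove $C_i$ of $\Gamma_{D,\ratbuild}$ lies in $\ratbuild$}: if some $C_i$ did, both of its vertices would lie in the full subcomplex $\ratbuild$, and by truncating the gallery (either at $C_i$ if $i < n$, or at $C_{n-1}$ if $i = n$, using the shared vertex with the predecessor) one would obtain a strictly shorter gallery from $D$ still containing a vertex of $\ratbuild$, contradicting the minimality of $\Gamma_{D,\ratbuild}$. In particular, $P_D$ is a vertex of $C_i$ only when $i = n$, and the second vertex $Q$ of $C_n$ lies outside $\ratbuild$, so $\sigma Q \neq Q$.

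Now suppose $C_i = \sigma C_j$ for some $i,j \in \{0,\dots,n\}$. If $P_D$ is a vertex of either side, the observation forces $i = j = n$, and then $C_n = \sigma C_n$ would be $\sigma$-fixed, hence in $\ratbuild$, contradicting the preliminary observation. Otherwise $i, j < n$, so neither $C_i$ nor $\sigma C_j$ contains $P_D$; removing the vertex $P_D$ from the tree $\build$ then places $C_i$ in the connected component of $Q$ and $\sigma C_j$ in the connected component of $\sigma Q$, and these components are distinct precisely because $Q \neq \sigma Q$. This contradiction yields (c). The length computation $\ell(\Gamma_D) = 2\ell(\Gamma_{D,\ratbuild}) + 1 = 2(d_{\ratbuild}(D)-1) + 1 = 2d_{\ratbuild}(D) - 1$ is then routine. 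The main obstacle is the tree-separation step in the $i,j<n$ case; everything else is careful bookkeeping of how $\sigma$ interacts with the stretched gallery, using only that $\ratbuild$ is exactly the fixed-point subcomplex of $\sigma$ and that $\build$ is a tree.
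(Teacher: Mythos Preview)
Your proof is correct. Both you and the paper reduce to checking condition (c) of Lemma~\ref{MinimalityLm}(ii), but the arguments for (c) take different routes. The paper observes that if some alcove $C$ lay in both halves, then $\Gamma_{D,\ratbuild}$ would contain both $C$ and $\sigma^{-1}C$; since $d_{\ratbuild}$ is $\sigma$-invariant (via Lemma~\ref{sigmalm}) and the alcoves of $\Gamma_{D,\ratbuild}$ have pairwise distinct distances to $\ratbuild$, this forces $C = \sigma^{-1}C \in \ratbuild$, contradicting the fact that $\Gamma_{D,\ratbuild}$ contains no alcove of $\ratbuild$. Your argument instead excises the vertex $P_D$ from the tree: after the preliminary observation that only the last alcove of $\Gamma_{D,\ratbuild}$ meets $P_D$, every earlier alcove of $\Gamma_{D,\ratbuild}$ lies in the branch through $Q$, while every earlier alcove of $\sigma\Gamma_{D,\ratbuild}^{-1}$ lies in the branch through $\sigma Q \neq Q$, and the remaining case $i=j=n$ is handled directly. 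The paper's version is a little slicker---no case split, and it reuses the distance function and Lemma~\ref{sigmalm} already in play---whereas yours is more hands-on with the tree structure and does not appeal to Lemma~\ref{sigmalm}. Both are of comparable length and difficulty.
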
 

\begin{proof} $\Gamma_{D,\ratbuild}$ and $\sigma \Gamma_{D,\ratbuild}^{-1}$ are minimal. $P_D$ is a last vertex of $\Gamma_{D,\ratbuild}$
 and a first vertex of $\sigma \Gamma_{D,\ratbuild}^{-1}$. Thus by Lemma \ref{MinimalityLm} we have to prove that they have no common 
alcoves. If an alcove $C$ would lie in  $\Gamma_{D,\ratbuild}$ and in $\sigma \Gamma_{D,\ratbuild}^{-1}$, then $\sigma C$ would too, and 
$\Gamma_{D,\ratbuild}$ would contain $C$ and $\sigma C$. Further $d_{\ratbuild}(C) = d_{\ratbuild}(\sigma C)$, since 
$\Gamma_{\sigma C, \ratbuild} = \sigma \Gamma_{C,\ratbuild}$ by Lemma \ref{sigmalm}. But all alcoves of $\Gamma_{D,\ratbuild}$ 
have different distances to $\ratbuild$ and hence $C = \sigma C$. This is equivalent to $C$ lying in $\ratbuild$ which leads to a contradiction, since 
$\Gamma_{D,\ratbuild}$ has no alcoves lying in $\ratbuild$. 

The length of $\Gamma_D$ is: 
\[\ell(\Gamma_D) = \ell(\Gamma_{D,\ratbuild}) + \ell(\sigma\Gamma_{D,\ratbuild}^{-1}) + 1 = (d_{\ratbuild}(D) - 1) + (d_{\ratbuild}(D) - 1) + 1 = 2d_{\ratbuild}(D) - 1. \qedhere \]
\end{proof}

The gallery $\Gamma_D$ is minimal and $\ell(\Gamma_D) > 0$, thus $\Gamma_D$ has a unique first vertex. 
If its type is $m \in \{0,1\}$, then $$\inv(D, \sigma D) = C_M^{(-1)^{m}(2d_{\ratbuild}(D) - 1)}.$$  

\noindent The set $X^{SL_2}_w(1)(\bar{k})$ is non-empty exactly for $w = 1$ and $w \in W_a$ with odd length (Proposition \ref{propb1} below or \cite{Re} 
Proposition 2.1.2). The following picture illustrates this via the identification of $W_a$ and $A_M$ (the fat alcoves are those 
for which $X^{SL_2}_w(1)(\bar{k})$  is non-empty). 
\\ 
\\ \includegraphics[bb = 80 85 0 0, scale = 1.5]{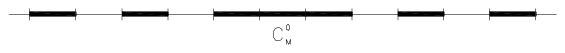}
\\
\begin{Prop}\label{propb1} The set $X^{SL_2}_w(1)(\bar{k})$ is non-empty if and only if $w = 1$ or $\ell(w)$ is odd.
Let now $w \in W_a$ such that $X^{SL_2}_w(1)(\bar{k}) \neq \emptyset$.
\begin{itemize}
\item[(i)] The set $X^{SL_2}_1(1)(\bar{k})$ is the set of all alcoves in $\ratbuild$.
\item[(ii)] If $\ell(w) = 2i-1, i > 0$ (i.e. $w C^0_M = C_{M}^{2i-1}$ or $w C^0_M = C_{M}^{-2i+1}$), let 
$$m = \left\{ \begin{array}{cl}0 & \text{ if i is odd and }wC^0_M = C_M^{-2i+1}\text{ or i is even and }wC^0_M = C_M^{2i-1}, 
\cr 1 &\text{ if i is odd and }wC^0_M = C_M^{2i-1}\text{ or i is even and }wC^0_M = C_M^{-2i+1}. \end{array} \right. $$ 
Then $$X^{SL_2}_w(1)(\bar{k}) = \coprod_{P \in \ratbuild^{(m)}}D^i_{\ratbuild}(P).$$ 
\end{itemize}
\end{Prop}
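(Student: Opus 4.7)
The plan is to use Lemma \ref{SLGLident}, which identifies $X^{SL_2}_w(1)(\bar{k})$ with the set of alcoves $D$ in $\build$ satisfying $\inv(D, \sigma D) = w$. Part (i) is then immediate: $\inv(D,\sigma D) = 1$ forces $\sigma D = D$, and since $\ratbuild$ is exactly the $\sigma$-fixed locus of $\build$, this is equivalent to $D$ being an alcove of $\ratbuild$. As a byproduct, no alcove of $\ratbuild$ contributes to $X^{SL_2}_w(1)(\bar{k})$ when $w \neq 1$.

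For (ii), suppose $D \notin \ratbuild$. The composite gallery $\Gamma_D = (\Gamma_{D,\ratbuild},\, \sigma\Gamma_{D,\ratbuild}^{-1})$ constructed just before the proposition connects $D$ to $\sigma D$, and by Lemma \ref{CompGal} it is minimal of odd length $2 d_{\ratbuild}(D) - 1$. Since $\build$ is a tree, this is the \emph{unique} minimal gallery from $D$ to $\sigma D$, so $\ell(\inv(D,\sigma D)) = 2 d_{\ratbuild}(D) - 1$. Combined with (i) this already yields the non-emptiness criterion and, when $\ell(w) = 2i - 1$, forces $d_{\ratbuild}(D) = i$ for every $D$ in $X^{SL_2}_w(1)(\bar{k})$.

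To distinguish $\inv(D,\sigma D) = C_M^{2i-1}$ from $C_M^{-(2i-1)}$, I would apply the recipe for $\inv$ recorded just after the definition of the relative position map: the sign is governed by the type of the first vertex of $\Gamma_D$. This first vertex is the vertex of $D$ not lying in the second alcove of $\Gamma_D$; tracing through the alternation of vertex types along the non-stuttering gallery $\Gamma_{D,\ratbuild}$ (whose last vertex is $P_D$ and whose length is $i - 1$) shows that it has the same type as $P_D$ when $i$ is even, and the opposite type when $i$ is odd. (The boundary case $i = 1$ is handled directly: the shared vertex of $D$ and $\sigma D$ is $\sigma$-fixed, hence equals $P_D$, so the first vertex of $\Gamma_D$ is the remaining vertex of $D$, of type $1 - m_{P_D}$, in agreement with the formula.) Matching each of the four resulting cases against the sign rule reproduces precisely the value of $m$ stated in the proposition.

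Putting everything together, the alcoves in $X^{SL_2}_w(1)(\bar{k})$ are exactly those with $d_{\ratbuild}(D) = i$ whose vertex of departure has the prescribed type $m$. Since each such $D$ admits a unique $P_D$ by Lemma \ref{MinGal}, this set decomposes as the disjoint union $\coprod_{P \in \ratbuild^{(m)}} D^{i}_{\ratbuild}(P)$, as claimed. The main obstacle will be the bookkeeping in the vertex-type computation: one has to keep the numbering of alcoves in $A_M$, the sign convention of $\inv$, and the parity of $i$ aligned simultaneously in order to verify that the four cases correctly reproduce the value of $m$ given in the statement.
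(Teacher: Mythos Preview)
Your proposal is correct and follows essentially the same approach as the paper's own proof: both use Lemma~\ref{SLGLident} to reduce to computing $\inv(D,\sigma D)$, invoke the minimal gallery $\Gamma_D$ and Lemma~\ref{CompGal} for its length, and then pin down the sign via the type of the first vertex, with disjointness coming from uniqueness of the vertex of departure. Your treatment of the type alternation along $\Gamma_{D,\ratbuild}$ is a bit more explicit than the paper's (which just says ``by parity of $i$'' and works out one of the four cases), and your separate handling of $i=1$ is harmless though not strictly necessary, since the alternation argument already covers it.
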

\begin{Lm}\label{Nonemptyb1} If $X^{SL_2}_w(1)(\bar{k}) \neq \emptyset$, then $w = 1$ or $\ell(w)$ is odd. \end{Lm}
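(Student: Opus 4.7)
The plan is to split on whether an alcove $D \in X^{SL_2}_w(1)(\bar{k})$ lies in $\ratbuild$ or not, and in each case read off $\ell(w)$ directly from the geometry of the minimal gallery connecting $D$ and $\sigma D$, using Lemma \ref{SLGLident} together with the description of $\inv$ via the length of the stretched gallery given before Definition \ref{distmin} and after the statement ``via the above identification''.

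First, I would note that $\ratbuild \subset \build$ is exactly the set of $\sigma$-fixed simplices. Hence if $D \in \ratbuild$, then $\sigma D = D$, so the gallery stretched from $D$ to $\sigma D$ has length $0$, meaning $\inv(D,\sigma D) = C_M^0$, i.e.\ $w = 1$. This handles the first alternative.

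Second, suppose $D$ is an alcove of $\build$ not contained in $\ratbuild$. Then $D \neq \sigma D$, and the construction preceding Lemma \ref{CompGal} produces the gallery $\Gamma_D = (\Gamma_{D,\ratbuild}, \sigma \Gamma_{D,\ratbuild}^{-1})$ connecting $D$ to $\sigma D$; by Lemma \ref{CompGal} this gallery is minimal of length $2 d_{\ratbuild}(D) - 1$. Since $\build$ is a tree, the minimal gallery connecting $D$ and $\sigma D$ is unique, so $\Gamma_D$ is the gallery stretched from $D$ to $\sigma D$. By the formula for $\inv$ in terms of the stretched gallery, $\inv(D,\sigma D) = C_M^{\pm(2d_{\ratbuild}(D) - 1)}$, hence $\ell(w) = 2d_{\ratbuild}(D) - 1$ is odd.

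Putting the two cases together gives the dichotomy claimed in the lemma: if $X^{SL_2}_w(1)(\bar{k}) \neq \emptyset$, then either $w = 1$ (alcoves in $\ratbuild$) or $\ell(w)$ is odd (alcoves outside $\ratbuild$). The only point that could cause trouble is the identification of the gallery in the tree $\build$ realizing $\inv(D,\sigma D)$, but this is immediate from uniqueness of minimal galleries in a tree together with Lemma \ref{CompGal}, so there is no real obstacle; the work has all been done in the preparatory lemmas.
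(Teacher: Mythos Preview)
Your proof is correct and follows essentially the same approach as the paper: a case split on whether $D$ lies in $\ratbuild$ or not, using $\ratbuild = \build^{\langle\sigma\rangle}$ for the first case and Lemma \ref{CompGal} for the second. The paper's proof is just a terser version of what you wrote.
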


\begin{proof} In fact, let $w \in W_a$ and $D \in X^{SL_2}_w(1)(\bar{k})$ be any alcove. If $D \in \ratbuild$, then $\inv(D, \sigma D) = C_M^0$ 
and thus $w=1$. Otherwise, the length of the minimal gallery $\Gamma_D$ constructed above is odd, and thus $\ell(w)$ is odd. \end{proof}

\begin{proof}[Proof of Proposition]
(i) follows from the fact that $\ratbuild = \build^{<\sigma>}$.
To prove (ii) we consider the case $i = \frac{\ell(w) + 1}{2}$ odd and $w C_M^0 = C_{M}^{2i-1}$. Thus $m = 1$. If an alcove $D$ lies in $X_w^{SL_2}(1)(\bar{k})$, then $D$ 
is not contained in $\ratbuild$ and the length of the gallery $\Gamma_D$ defined above must be equal to the length of the gallery stretched from $C_M^0$ 
to $C_M^{2i-1}$: $2d_{\ratbuild}(D)-1 = 2i-1$, so we must have $d_{\ratbuild}(D) = i$.  The type of the first vertex of $\Gamma_D$ must coincide 
with the type of the first vertex of the gallery stretched from $C_M^0$ to $C_M^{2i-1}$, i.e. $0$, so the type of the vertex of departure for $D$ 
from $\ratbuild$ must be $1$ (by parity of $i$). So $X^{SL_2}_w(1)(\bar{k}) \subseteq \bigcup_{P \in \ratbuild^{(1)}} D_{\ratbuild}^i(P)$. 

If conversely, $D \in D^i_{\ratbuild}(P)$ with $P \in \ratbuild^{(1)}$, then the relative position of $D$ and $\sigma D$ will be $C_M^{2i-1}$ 
(this is clear by construction of $\Gamma_D$). Hence $X_w^{SL_2}(1)(\bar{k}) = \bigcup_{P \in \ratbuild^{(1)}} D_{\ratbuild}^i(P)$ and thus is non-empty. 
The sets $D^i_{\ratbuild}(P)$ are disjoint for different $P \in \ratbuild^{(1)}$, by uniqueness of the vertex of departure. The other three cases 
($i$ is odd, $w C_M^0 = C_{M}^{-2i+1}$ and the two cases where $i$ is even) have similar proofs.
\end{proof}


\subsection{The second case: the sets $X^{SL_2}_w(b)(\bar{k})$ for diagonal $b \neq 1$}\label{seccase} \mbox{} 

Let now $b = \begin{pmatrix} 1 & 0 \\ 0 & t^\alpha \end{pmatrix}$ with $\alpha > 0$. First of all, $b$ acts on $A_M$ by translation by $\alpha$ alcoves 
to the right: $b$ sends the alcove $C_M^i$ to the alcove $C_M^{i + \alpha}$. 
The distance from the main apartment will play the analogous role, which in the previous case was played by the distance to $\ratbuild$. 
Let $D$ be an alcove in $\build$ which is not contained in $A_M$. Apply Lemma \ref{MinGal} to the full connected subcomplex $\mathfrak{C} = A_M$ 
of $\build$ and $D$. Thus there is a unique minimal gallery $\Gamma_{D,A_M}$ stretched from $D$ to a uniquely determined vertex $P_D$ in $A_M$, 
which is the vertex of departure for $D$ from $A_M$. Then $P_D$ is a last vertex of $\Gamma_{D,A_M}$.
\begin{Lm}\label{bsigmalm} We have: $\Gamma_{b \sigma D,A_M} = b \sigma \Gamma_{D,A_M}$.
\end{Lm}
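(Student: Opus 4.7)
The statement is parallel to Lemma \ref{sigmalm}, which was handled by a one-line appeal to Lemma \ref{miini}. The plan is to do the same here, but with the automorphism $b\sigma$ of $\build$ in place of $\sigma$. Thus all the work is in checking the hypotheses of Lemma \ref{miini}, namely that $b\sigma$ is a simplicial automorphism of $\build$ that stabilizes the full connected subcomplex $A_M$, and that $D \notin A_M$ forces $b\sigma D \notin A_M$.

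First I would note that $b\sigma$ acts as a simplicial automorphism of $\build$: $\sigma$ acts via its semilinear action on lattices, and $b \in GL_2(L)$ acts $\bar{k}$-linearly, so their composition preserves the adjacency relation defining the simplicial structure. To see that $A_M$ is $b\sigma$-stable, I would check each factor separately. The alcove $C_M^i$ is represented by the lattice chain $\mathfrak{o}\oplus t^{i+1}\mathfrak{o} \subsetneq \mathfrak{o} \oplus t^i \mathfrak{o}$, which is obtained by base change from an $\mathfrak{o}_F$-lattice chain, hence is fixed by $\sigma$; so $\sigma$ stabilizes every alcove of $A_M$ (in particular, $A_M$ itself). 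For $b = \begin{pmatrix}1 & 0 \\ 0 & t^\alpha\end{pmatrix}$, the introductory remark of \S\ref{seccase} already shows that $b$ acts on $A_M$ by the translation $C_M^i \mapsto C_M^{i+\alpha}$, hence stabilizes $A_M$. Composing, $b\sigma$ stabilizes $A_M$.

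Since $b\sigma$ is a bijective simplicial automorphism stabilizing $A_M$, it also stabilizes the complement of $A_M$; in particular $D \notin A_M$ implies $b\sigma D \notin A_M$, so the gallery $\Gamma_{b\sigma D, A_M}$ is indeed defined. Now Lemma \ref{miini} applied to $\gamma = b\sigma$ and $\mathfrak{C} = A_M$ yields
\[
\Gamma_{b\sigma D, A_M} = (b\sigma)\,\Gamma_{D, A_M},
\]
which is the claim. The only conceivable obstacle is the verification that $b\sigma$ stabilizes $A_M$, but this is immediate from the two elementary observations above, so no real difficulty arises.
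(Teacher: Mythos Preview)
Your proposal is correct and follows exactly the paper's approach: the paper's proof is the single line ``Follows from Lemma \ref{miini}, applied to the automorphism given by $b\sigma$,'' and you have simply spelled out the routine verification of the hypotheses of Lemma \ref{miini} (that $b\sigma$ is a simplicial automorphism stabilizing $A_M$).
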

\begin{proof} Follows from Lemma \ref{miini}, applied to the automorphism given by $b \sigma$.  
\end{proof}

Like in the previous case, we want to construct a gallery stretched from $D$ to $b \sigma D$. Let $\Gamma_{D,tr}$ be the gallery stretched from
$P_D$ to $b \sigma P_D$. Thus $\Gamma_{D,tr}$ has $P_D$ as a first vertex and $b \sigma P_D$ as a last vertex. 
All alcoves of $\Gamma_{D, tr}$ lie in $\maap$. The length of $\Gamma_{D,tr}$ is $\alpha - 1$. 
From Lemma \ref{bsigmalm} follows: $\Gamma_{b \sigma D,A_M}^{-1} = b \sigma \Gamma_{D,A_M}^{-1}$. This gallery has 
$b \sigma P_D$ as a first vertex and $b \sigma D$ as the last alcove. We set $$\Gamma_D := (\Gamma_{D,A_M},\Gamma_{D,tr}, b \sigma \Gamma_{D,A_M}^{-1}).$$ 
It has $D$ as the first and $b \sigma D$ as the last alcove. 

\begin{Lm} \label{CompGalDiag} The gallery $\Gamma_D$ is minimal and $\ell(\Gamma_D) = 2d_{\maap}(D) + \alpha - 1 $.\end{Lm}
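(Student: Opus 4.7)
The plan is to follow the pattern of Lemma~\ref{CompGal}: apply Lemma~\ref{MinimalityLm}(ii) twice, first to verify that $(\Gamma_{D,\maap},\Gamma_{D,tr})$ is minimal and then to append $b\sigma\Gamma_{D,\maap}^{-1}$. Each of the three subgalleries is already minimal — the outer two by Lemma~\ref{MinGal} together with the fact that $b\sigma$ is a simplicial automorphism, and the middle one because $\maap$ is a line — so the length formula will follow immediately from $\ell(\Gamma,\Gamma') = \ell(\Gamma) + \ell(\Gamma') + 1$ combined with $\ell(\Gamma_{D,\maap}) = d_{\maap}(D) - 1$ and $\ell(\Gamma_{D,tr}) = \alpha - 1$. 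The vertex-at-the-join condition (d) is essentially built into the construction: $P_D$ is a last vertex of $\Gamma_{D,\maap}$ and the first vertex of $\Gamma_{D,tr}$, while $b\sigma P_D$ is the last vertex of $\Gamma_{D,tr}$ and the first vertex of $b\sigma\Gamma_{D,\maap}^{-1}$ (being the image of the first vertex $P_D$ of $\Gamma_{D,\maap}^{-1}$).

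The substantial work is the no-common-alcoves condition (c). Since $\maap \subseteq \ratbuild$, the Frobenius $\sigma$ fixes $\maap$ pointwise, and because $b$ lies in the torus it preserves $\maap$; hence $b\sigma$ acts on $\maap$ as translation by $\alpha > 0$. By the uniqueness in Lemma~\ref{MinGal}, $\Gamma_{D,\maap}$ contains no alcove of $\maap$, and therefore neither does its $b\sigma$-image $b\sigma\Gamma_{D,\maap}^{-1}$. Since $\Gamma_{D,tr}$ lies entirely inside $\maap$, the two pairs involving $\Gamma_{D,tr}$ have no common alcoves, and the only remaining case is a possible common alcove of $\Gamma_{D,\maap}$ and $b\sigma\Gamma_{D,\maap}^{-1}$.

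This last case is the delicate one and is the main obstacle. Writing such a shared alcove as $C = b\sigma C'$ with both $C, C' \in \Gamma_{D,\maap}$ and invoking Lemma~\ref{bsigmalm} yields $d_{\maap}(C) = d_{\maap}(C')$. But the values $d_{\maap}(\cdot)$ along $\Gamma_{D,\maap}$ are pairwise distinct: a coincidence would, via the tree property plus minimality of $\Gamma_{D,\maap}$, yield a strictly shorter gallery from $D$ to a vertex of $\maap$. Hence $C = C'$, so $C$ is fixed by $b\sigma$; but then Lemma~\ref{bsigmalm} forces the vertex of departure $P_C \in \maap$ to be fixed too, contradicting the fact that translation by $\alpha \neq 0$ acts without fixed vertices on $\maap$. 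This is the only non-formal step and is the direct analogue of the ``$C = \sigma C$'' contradiction used in Lemma~\ref{CompGal}.
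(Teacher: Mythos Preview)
Your proof is correct and follows the same overall scaffold as the paper: apply Lemma~\ref{MinimalityLm}(ii) twice, verify (c) and (d) at each junction, then compute the length. There is one noteworthy difference in how you handle condition (c) for the pair $\Gamma_{D,\maap}$ and $b\sigma\Gamma_{D,\maap}^{-1}$. You import the \emph{$b=1$ mechanism} from Lemma~\ref{CompGal}: assume a common alcove $C = b\sigma C'$, use invariance of $d_{\maap}$ under $b\sigma$ to get $C = C'$, and then derive a fixed-point contradiction on $\maap$. This works, but in the diagonal case the paper's argument is shorter and more conceptual: every alcove of $\Gamma_{D,\maap}$ has vertex of departure $P_D$, every alcove of $b\sigma\Gamma_{D,\maap}^{-1} = \Gamma_{b\sigma D,\maap}^{-1}$ has vertex of departure $b\sigma P_D$, and $P_D \neq b\sigma P_D$ since $\alpha > 0$; uniqueness of the vertex of departure (Lemma~\ref{MinGal}) then immediately excludes any common alcove. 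The detour through distances and fixed points is only genuinely needed in the $b=1$ case, where the two outer galleries share the \emph{same} vertex of departure.

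One small point you glossed over: for the second application of Lemma~\ref{MinimalityLm}(ii), condition (d) requires $b\sigma P_D$ to be a last vertex of the \emph{composite} $(\Gamma_{D,\maap},\Gamma_{D,tr})$, not merely of $\Gamma_{D,tr}$. The paper checks this explicitly (the only vertex of $\Gamma_{D,\maap}$ lying in $\maap$ is $P_D \neq b\sigma P_D$, so $b\sigma P_D$ survives as a last vertex of the composite). This is easy but should be stated.
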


\begin{proof} 
The galleries $\Gamma_{D,A_M}$, $\Gamma_{D,tr}$ and $b \sigma \Gamma_{D,A_M}^{-1}$ are minimal. We use Lemma \ref{MinimalityLm}(ii). At first, 
we prove that $\Gamma_{D,A_M}$, $\Gamma_{D,tr}$ and $b \sigma \Gamma_{D,A_M}^{-1}$ pairwise have no common alcoves. In fact, $\Gamma_{D,A_M}$ and 
$b\sigma \Gamma_{D,A_M}^{-1} = \Gamma_{b\sigma D,A_M}^{-1}$ have no alcoves lying in $A_M$. Hence $\Gamma_{D,A_M}$ and $b \sigma \Gamma_{D,A_M}^{-1}$ 
have no common alcoves with $\Gamma_{D,tr}$ (whose alcoves all lie in $A_M$). All alcoves in $\Gamma_{D,A_M}$ have the same vertex $P_D$ of departure 
from $A_M$.  The vertex $b\sigma P_D$  is the vertex of departure from $A_M$ for every alcove in 
$\Gamma_{b \sigma D,A_M}^{-1} = b \sigma \Gamma_{D,A_M}^{-1}$. But $P_D \neq b \sigma P_D$. By the uniqueness of the vertex of departure, 
$\Gamma_{D,A_M}$ and $b \sigma \Gamma_{D,A_M}^{-1}$ contain no common alcoves.

Now the condition (d) of Lemma \ref{MinimalityLm}(ii) for $\Gamma_{D,A_M}$, $\Gamma_{D,tr}$ is clear. 
Thus the composite gallery $(\Gamma_{D,A_M}, \Gamma_{D,tr})$ is minimal.

Now we have to verify the condition (d) of Lemma \ref{MinimalityLm}(ii) for $(\Gamma_{D,A_M}, \Gamma_{D,tr})$ and $b \sigma \Gamma_{D,A_M}^{-1}$. The vertex 
$b \sigma P_D$ is a first vertex of $b \sigma \Gamma_{D,A_M}^{-1}$. Further, $b \sigma P_D$ is a last vertex of $\Gamma_{D,tr}$. The only vertex contained 
in $\Gamma_{D,A_M}$ and in $\Gamma_{D,tr}$ is $P_D$ (it is the unique vertex of $\Gamma_{D,A_M}$ contained in $A_M$). But $P_D \neq b \sigma P_D$, 
and thus $b \sigma P_D$ is also a last vertex of the composite gallery $(\Gamma_{D,A_M}, \Gamma_{D,tr})$. 
Thus by Lemma \ref{MinimalityLm}(ii) the composite gallery $\Gamma_D = (\Gamma_{D,A_M}, \Gamma_{D,tr}, b \sigma \Gamma_{D,A_M}^{-1})$ is minimal.

The length of $\Gamma_D$ is: 
\begin{eqnarray*} \ell(\Gamma_D) &=& \ell(\Gamma_{D,A_M}) + \ell(\Gamma_{D, tr}) + \ell(b \sigma \Gamma_{D,A_M}^{-1}) + 2 \\ &=& 
2\ell(\Gamma_{D,A_M}) + \ell(\Gamma_{D, tr}) + 2 = 2(d_{A_M}(D) - 1) + (\alpha - 1) + 2 \\ &=& 
2d_{A_M}(D) + \alpha - 1. \qedhere \end{eqnarray*}
\end{proof}

The gallery $\Gamma_D$ is minimal and $\ell(\Gamma_D) > 0$, hence it has a unique first vertex. If its type is $m \in \{0,1\}$, then 
$$\inv(D, b\sigma D) = C_M^{(-1)^m(\alpha + 2d_{A_M}(D) - 1)}.$$  

\noindent Now, $X^{SL_2}_w(b)(\bar{k})$ is non-empty if and only if $w$ has length $\alpha$ or $\alpha + 2i-1$ for some $i > 0$ (see Proposition \ref{propbd} below or 
\cite{Re} 2.1.4 and 2.2). The following picture illustrates this in the case $\alpha = 4$ (the fat alcoves are those for which $X^{SL_2}_w(b)(\bar{k})$  
is non-empty).
\\
\\ \includegraphics[bb = 100 190 0 0]{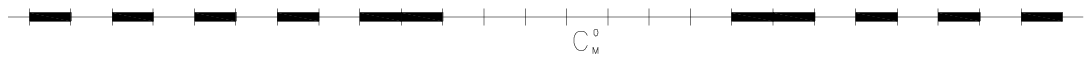}
\\
\\

\begin{Prop}\label{propbd} Let $b = \begin{pmatrix} 1 & 0 \\ 0 & t^{\alpha} \end{pmatrix}$ with $\alpha > 0$, and $w \in W_a$. Then $X^{SL_2}_w(b)(\bar{k})$ is non-empty if and only 
if $\ell(w) = \alpha$ or $\ell(w) = \alpha + 2i-1$ with $i>0$.

Let now $w \in W_a$ such that $X^{SL_2}_w(b)(\bar{k}) \neq \emptyset$.
\begin{itemize}
\item[(i)] If $wC^0_M = C_M^{\alpha}$, then $X^{SL_2}_w(b)(\bar{k}) = \coprod\limits_{j \in \mathbb{Z}} \{C_M^{2j}\}$. 
\item[(ii)] If $wC^0_M = C_M^{-\alpha}$, then $X^{SL_2}_w(b)(\bar{k}) = \coprod\limits_{j \in \mathbb{Z}} \{C_M^{2j+1}\} $.
\item[(iii)] If $\ell(w) = \alpha + (2i-1)$ for $i > 0$ (i.e. $wC_M^0 = C_M^{\alpha + (2i-1)}$ or $wC_M^0 = C_M^{ - \alpha - (2i-1)}$), let 
$$m = \left\{ \begin{array}{cl} 0 & \text{if }i \text{ is odd and } wC_M^0 = C_M^{ - \alpha - (2i-1)} \text{ or if }i \text{ is even and } wC_M^0 = 
C_M^{\alpha + (2i-1)}, \cr 1 &  \text{if }i \text{ is even and } wC_M^0 = C_M^{ - \alpha - (2i-1)} \text{ or if }i \text{ is odd and } wC_M^0 = C_M^{\alpha 
+ (2i-1)}. \end{array} \right.$$ 
Then $$X^{SL_2}_w(b)(\bar{k}) = \coprod_{P \in A_M^{(m)}} D^i_{A_M}(P).$$
\end{itemize}
\end{Prop}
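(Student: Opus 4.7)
The plan is to mirror the combinatorial approach of Proposition \ref{propb1}, splitting into the cases $D \in A_M$ and $D \notin A_M$. In each case the strategy is to build an explicit minimal gallery from $D$ to $b\sigma D$, read off its length and the type of its first vertex, and invoke the formula giving $\inv(D,b\sigma D)$ in terms of these two data together with Lemma \ref{SLGLident}.

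First I would dispose of $D = C_M^j \in A_M$. Since $\sigma$ fixes $A_M$ pointwise and $b$ translates the apartment by $\alpha$ steps, $b\sigma D = C_M^{j+\alpha}$; the unique minimal gallery between these two alcoves lies in $A_M$ and has length $\alpha$. Its first vertex is the vertex of $C_M^j$ not shared with $C_M^{j+1}$, represented by $\mathfrak{o} \oplus t^j\mathfrak{o}$, which has type $j \bmod 2$. Hence $\inv(D,b\sigma D) = C_M^{\alpha}$ if $j$ is even and $C_M^{-\alpha}$ if $j$ is odd, which immediately yields (i), (ii), and the $\ell(w)=\alpha$ part of the non-emptiness claim.

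Next I would treat $D \notin A_M$ by setting $i := d_{A_M}(D)$ and applying Lemma \ref{CompGalDiag}: the composite $\Gamma_D = (\Gamma_{D,A_M},\Gamma_{D,tr},b\sigma\Gamma_{D,A_M}^{-1})$ is minimal of length $\alpha + 2i - 1$, so $\inv(D,b\sigma D) = C_M^{(-1)^{\widetilde{m}}(\alpha+2i-1)}$ where $\widetilde{m}\in\{0,1\}$ is the type of the unique first vertex of $\Gamma_D$. This already forces the non-emptiness constraint of the proposition. To pin down the sign in terms of the type $m$ of $P_D$, the core observation is that if $i \geq 2$ the first vertex of $\Gamma_D$ coincides with that of $\Gamma_{D,A_M}$, while for $i=1$ it is the vertex of $D$ distinct from $P_D$ (which, by fullness of $A_M$, is the unique vertex of $D$ outside $A_M$). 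A standard alternation argument for shared-vertex types along a minimal gallery in the tree $\build$ (using Lemma \ref{MinimalityLm}(b)) then gives $\widetilde{m} = m$ when $i$ is even and $\widetilde{m} = 1-m$ when $i$ is odd. Plugging the four combinations into $C_M^{(-1)^{\widetilde{m}}(\alpha+2i-1)}$ reproduces the case distinction of (iii).

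For the reverse inclusion in (iii) I would run the construction backwards: every $D \in D_{A_M}^i(P)$ with $P \in A_M^{(m)}$ produces, via the same $\Gamma_D$, exactly the predicted value of $\inv(D,b\sigma D)$, and thus lies in $X_w^{SL_2}(b)(\bar{k})$. Disjointness of the indexing by $P$ follows from uniqueness of the vertex of departure in Lemma \ref{MinGal}. The main technical obstacle is the vertex-type bookkeeping for $\Gamma_D$, in particular handling the seam between $\Gamma_{D,A_M}$ and $\Gamma_{D,tr}$ and the degenerate endpoint behaviour at $i=1$; once this parity tracking is carried out, the rest of the argument is a direct application of the gallery-length formula of Lemma \ref{CompGalDiag}.
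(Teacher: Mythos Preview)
Your proposal is correct and follows essentially the same route as the paper: split into $D\in A_M$ versus $D\notin A_M$, use Lemma \ref{CompGalDiag} for the length of $\Gamma_D$, and deduce the sign from the type of the first vertex. The paper handles the parity step more tersely (it just says ``by parity of $i$'' and treats one representative case), whereas you unpack the alternation of vertex types along $\Gamma_{D,A_M}$ and single out $i=1$; both arrive at the same relation $\widetilde m \equiv m+i \pmod 2$, and the reverse inclusion and disjointness are argued identically.
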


\begin{Lm}\label{NonemptybDiag} If $X^{SL_2}_w(b)(\bar{k}) \neq \emptyset$, then $\ell(w) = \alpha$ or $\ell(w) = \alpha + 2i - 1$ with $i>0$. \end{Lm}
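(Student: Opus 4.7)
The plan is to take any $D \in X^{SL_2}_w(b)(\bar{k})$, interpret $w = \inv(D, b\sigma D)$ via Lemma \ref{SLGLident}, and argue that the length of $w$ equals the length of the minimal gallery connecting $D$ to $b\sigma D$ (which is unique since $\build$ is a tree). Then I split into two cases depending on whether $D$ lies in the standard apartment $A_M$ or not, and compute this length in each case.

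First I would treat the case $D \in A_M$. The alcoves $C_M^i$ are represented by the lattices $\mathfrak{o} \oplus t^{i+1}\mathfrak{o} \subsetneq \mathfrak{o} \oplus t^i \mathfrak{o}$, whose $\mathfrak{o}$-module structure is generated by the standard basis; since $\sigma$ fixes the coefficients of the basis vectors, every alcove of $A_M$ is $\sigma$-stable. Hence $b\sigma D = b D$, and since $b$ translates $A_M$ by $\alpha$ alcoves (sending $C_M^j$ to $C_M^{j+\alpha}$) the gallery stretched from $D$ to $b D$ inside $A_M$ has length exactly $\alpha$. Consequently $\ell(w) = \alpha$ in this case.

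Next I would treat the case $D \notin A_M$. This is precisely the situation of Lemma \ref{CompGalDiag}: the composite gallery
\[
\Gamma_D = (\Gamma_{D,A_M},\, \Gamma_{D,tr},\, b\sigma \Gamma_{D,A_M}^{-1})
\]
is a minimal gallery from $D$ to $b\sigma D$ of length $2\, d_{A_M}(D) + \alpha - 1$. Setting $i := d_{A_M}(D)$, we have $i \geq 1$ because $D \notin A_M$, so $\ell(w) = \alpha + 2i - 1$ with $i > 0$.

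Combining the two cases gives exactly the stated dichotomy, so the lemma follows. The only step requiring real work is Case 2, and this has already been carried out in Lemma \ref{CompGalDiag}; the remaining content is the easy Case 1, where the essential point to justify is the $\sigma$-invariance of the alcoves in $A_M$, which is immediate from the explicit lattice description. So in practice there is no serious obstacle — the work was done in setting up $\Gamma_D$ and proving its minimality.
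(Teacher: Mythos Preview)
Your proposal is correct and follows essentially the same approach as the paper: split into the cases $D \in A_M$ and $D \notin A_M$, use the translation action of $b$ on $A_M$ in the first case, and invoke Lemma \ref{CompGalDiag} in the second. The paper's proof is slightly terser (it computes $\inv(D,b\sigma D)$ directly for $D = C_M^{2j}$ or $C_M^{2j+1}$ rather than explicitly noting $\sigma$-stability of $A_M$), but the content is identical.
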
  

\begin{proof} In fact, let $w \in W_a$ and $D \in X^{SL_2}_w(b)(\bar{k})$ be any alcove. If $D = C^{2j}_M$ resp. $D = C^{2j+1}_M$ lies in $A_M$, then 
$\inv(D, b\sigma D) = C^{\alpha}_M$ resp. $C^{-\alpha}_M$ and $\ell(w) = \alpha$. Otherwise, the length of the minimal gallery $\Gamma_D$ 
constructed above is $\alpha + 2i - 1$ for some $i > 0$. Thus $\ell(w) = \alpha + 2i - 1$. \end{proof}

\begin{proof}[Proof of Proposition] To prove (i) and (ii), we notice that if $\ell(w) = \alpha$, then $X^{SL_2}_w(b)(\bar{k})$ is contained in $\maap$: 
in fact if $D$ is not in $A_M$, then the length of the gallery $\Gamma_D$ constructed above is $\alpha + 2i - 1$ for some $i \in \mathbb{Z}$, which differs 
by an odd integer from $\alpha$. Now $b$ acts on the alcoves in $A_M$ by shifting 
by $\alpha$ alcoves to the right. Thus if $w C^0_M = C^{\alpha}_M$, then $X^{SL_2}_w(b)(\bar{k}) = \{C_M^{2j} \colon j \in \mathbb{Z}\} $. If $w C^0_M = C^{-\alpha}_M$, then 
$X^{SL_2}_w(b)(\bar{k}) = \{C_M^{2j-1} \colon j \in \mathbb{Z}\}$.

Now we prove (iii) for $i > 0$ odd and $wC^0_M = C_M^{\alpha + 2i-1}$ (the other cases can be proven similarly). In this case $m=1$. If $D$ lies in the 
set $X^{SL_2}_w(b)(\bar{k})$, then $D$ does not lie in $A_M$, by the above considerations, and the length of the gallery $\Gamma_D$ constructed above must 
be equal to $\ell(w) = \alpha + 2i - 1$. Therefore, we must have: $\alpha + 2d_{\maap}(D) = \alpha + 2i$, which implies $d_{\maap}(D) = i$. The first vertex 
of $\Gamma_D$ must have the same type as the first vertex of the gallery stretched from $C_M^0$ to $C_M^{\alpha + (2i-1)}$, i.e. $0$. So, by parity of $i$, the vertex of 
departure for $D$ must have type 1. Thus, $\bigcup_{P \in A_M^{(1)}}D^i_{A_M}(P) \supseteq X^{SL_2}_w(b)(\bar{k})$.

Conversely, if $D \in D^i_{A_M}(P)$ for some $P \in A_M^{(1)}$, then the corresponding gallery $\Gamma_D$ has the length 
$\alpha  + (2i-1)$ and the type of its first vertex is 0, thus it can be folded into the gallery stretched from $C_M^0$ to $C_M^{\alpha + (2i-1)}$. Hence 
$X^{SL_2}_w(b)(\bar{k}) = \bigcup_{P \in A_M^{(1)}}D^i_{A_M}(P)$. This union is disjoint, since the vertex of departure is uniquely determined. Thus (iii) follows.
Now, the first part of the Proposition follows from Lemma \ref{NonemptybDiag} and (i)-(iii). 
\end{proof} 


\subsection {The third case: the sets $X^{SL_2}_w(b_1)(\bar{k})$} \mbox{}

Recall that $b_1 = \begin{pmatrix} 0 & 1 \\ t & 0 \end{pmatrix}$. The action of $b_1$ on the main apartment is given by reflection about the midpoint of 
$C_M^0$, thus it sends $C_M^{\alpha}$ to $C_M^{-\alpha}$. The 
vertex of $C_M^0$ represented by $\mathfrak{o} \oplus \mathfrak{o}$ goes to the vertex represented by $\mathfrak{o} \oplus t\mathfrak{o}$, and conversely.

In this case, $C_M^0$ plays the same role as $\ratbuild$ and $A_M$ in the previous cases. Let $D \neq C_M^0$ be an alcove in $\build$. By Lemma 
\ref{MinGal}, applied  to the full connected subcomplex $\overline{C^0_M}$ of $\build$ containing only the alcove $C_M^0$ and its vertices, there is a 
unique minimal gallery $\Gamma_{D, \overline{C_M^0}}$ in $\build$ stretched from $D$ to its vertex $P_D$ of departure from $\overline{C_M^0}$. 

\begin{Lm}
We have: $b_1\sigma\Gamma_{D,\overline{C^0_M}} = \Gamma_{b_1\sigma D,\overline{C^0_M}}$.
\end{Lm}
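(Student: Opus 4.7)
The plan is to deduce the identity as an immediate instance of Lemma \ref{miini}, applied with $\gamma = b_1\sigma$ and with the full connected subcomplex taken to be $\overline{C_M^0}$. The only thing to check is that $b_1\sigma$ is genuinely an automorphism of the simplicial complex $\build$ which stabilises the subcomplex $\overline{C_M^0}$.

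First, I would point out that $\sigma$ acts on $\build$ as a simplicial automorphism (by acting on representing lattices), and that $b_1 \in GL_2(L)$ acts on $\build$ via linear transformations, which also preserves adjacency and hence is a simplicial automorphism; thus the composition $b_1\sigma$ is one as well. Next I would verify that $b_1\sigma$ stabilises $\overline{C_M^0}$: the alcove $C_M^0$ is represented by $\mathfrak{o}\oplus t\mathfrak{o} \subsetneq \mathfrak{o}\oplus\mathfrak{o}$, and both lattices are $\sigma$-stable, so $\sigma C_M^0 = C_M^0$ (with its two vertices fixed individually). The action of $b_1$ as already observed swaps the two vertices of $C_M^0$ and fixes $C_M^0$ as an alcove. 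Consequently $b_1\sigma$ permutes the two vertices of $C_M^0$ and fixes $C_M^0$, so the full subcomplex $\overline{C_M^0}$ is mapped to itself.

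Once these two properties are in hand, Lemma \ref{miini} applies verbatim and yields
\[
\Gamma_{b_1\sigma D,\,\overline{C_M^0}} \;=\; b_1\sigma\, \Gamma_{D,\,\overline{C_M^0}},
\]
which is precisely the desired identity. I do not anticipate a main obstacle here: the only subtle point is the check that $b_1$ stabilises $\overline{C_M^0}$, and this is exactly the observation already made in the paragraph preceding the lemma, where the action of $b_1$ on $A_M$ was described as reflection about the midpoint of $C_M^0$ that interchanges its two vertices.
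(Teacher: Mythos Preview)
Your proposal is correct and takes exactly the same approach as the paper, which simply states that the lemma follows from Lemma~\ref{miini} applied to the automorphism $b_1\sigma$. You have merely spelled out the verification that $b_1\sigma$ is a simplicial automorphism of $\build$ stabilising $\overline{C_M^0}$, which the paper leaves implicit.
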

\begin{proof} Follows from Lemma \ref{miini}, applied to the automorphism given by $b_1 \sigma$.
\end{proof}

Like above, we construct the minimal gallery $\Gamma_D$ stretched from $D$ to $b_1 \sigma D$.
Let $\Gamma_{D,tr}$ be the gallery consisting of the single alcove $C_M^0$. Now, $P_D$ is a last vertex of $\Gamma_{D,\overline{C^0_M}}$ and a first vertex 
of $\Gamma_{D,tr}$. Further, $b_1\sigma P_D$ is a first vertex of $b_1 \sigma \Gamma_{D,\overline{C^0_M}}^{-1} = \Gamma_{b_1\sigma D,\overline{C^0_M}}^{-1}$ and a last vertex of $\Gamma_{D, tr}$. 
Hence $$\Gamma_D := (\Gamma_{D,\overline{C^0_M}}, \Gamma_{D,tr}, b_1\sigma\Gamma_{D,\overline{C^0_M}}^{-1})$$ 
is a gallery. Its first alcove is $D$ and its last alcove is $b_1\sigma D$. 

\begin{Lm}\label{dCM} The gallery $\Gamma_D$ is minimal and $\ell(\Gamma_D) = 2d_{\overline{C^0_M}}(D)$. \end{Lm}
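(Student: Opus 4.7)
My plan is to mimic the proof of Lemma \ref{CompGalDiag}, applying Lemma \ref{MinimalityLm}(ii) twice to the three constituent pieces of $\Gamma_D$. First, each piece is minimal: $\Gamma_{D,\overline{C^0_M}}$ by construction (Lemma \ref{MinGal}), $b_1\sigma\Gamma_{D,\overline{C^0_M}}^{-1}$ as the image of a minimal gallery under the simplicial automorphism $b_1\sigma$ (and then reversed), and $\Gamma_{D,tr}=(C_M^0)$ trivially since it has length $0$.

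Next I would check pairwise that the three galleries share no alcoves. The key observation is that $\Gamma_{D,\overline{C^0_M}}$ itself does not contain $C_M^0$: otherwise the alcove preceding $C_M^0$ in that gallery would already share a vertex with $C_M^0$, so the gallery could be shortened, contradicting minimality. Since $b_1\sigma$ fixes $C_M^0$ as an alcove (by the explicit description at the start of this subsection), $b_1\sigma\Gamma_{D,\overline{C^0_M}}^{-1}$ also avoids $C_M^0$, so $\Gamma_{D,tr}$ is disjoint from both outer galleries. For the two outer galleries, every alcove of $\Gamma_{D,\overline{C^0_M}}$ has $P_D$ as vertex of departure from $\overline{C_M^0}$, whereas every alcove of $b_1\sigma\Gamma_{D,\overline{C^0_M}}^{-1}=\Gamma_{b_1\sigma D,\overline{C^0_M}}^{-1}$ has $b_1\sigma P_D$ as vertex of departure; since $b_1\sigma$ swaps the two vertices of $C_M^0$, these are different, so uniqueness of the vertex of departure (Lemma \ref{MinGal}) yields disjointness.

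Now I would verify condition (d) of Lemma \ref{MinimalityLm}(ii) at each junction. At the first junction, $P_D$ is a last vertex of $\Gamma_{D,\overline{C^0_M}}$ by definition, and since $\Gamma_{D,tr}$ has length $0$ every vertex of $C_M^0$ is a first vertex of it, so (d) holds. For the second junction, the last vertex of the composite $(\Gamma_{D,\overline{C^0_M}},\Gamma_{D,tr})$ is the vertex of $C_M^0$ not shared with the preceding alcove of $\Gamma_{D,\overline{C^0_M}}$ (or, if $d_{\overline{C_M^0}}(D)=1$, the vertex of $C_M^0$ not in $D$); in either case this is precisely $b_1\sigma P_D$, which is a first vertex of $b_1\sigma\Gamma_{D,\overline{C^0_M}}^{-1}$ by applying the automorphism $b_1\sigma$ to the first-vertex-relation in $\Gamma_{D,\overline{C^0_M}}^{-1}$. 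Two applications of Lemma \ref{MinimalityLm}(ii) then yield minimality of $\Gamma_D$, and the length computation is
\[\ell(\Gamma_D) = (d_{\overline{C^0_M}}(D)-1) + 0 + (d_{\overline{C^0_M}}(D)-1) + 2 = 2\,d_{\overline{C^0_M}}(D).\]

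The only nontrivial step is the disjointness of the two outer galleries in Step 2: this is where the explicit geometric fact that $b_1\sigma$ swaps the two vertices of $C_M^0$ is indispensable, since otherwise uniqueness of the vertex of departure would not separate the two gallery-supports.
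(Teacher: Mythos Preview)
Your proof is correct and follows essentially the same approach as the paper's own proof: both apply Lemma \ref{MinimalityLm}(ii) twice to the three pieces, use the vertex-of-departure argument together with $P_D \neq b_1\sigma P_D$ to separate the two outer galleries, and verify condition (d) at each junction in the same way. Your justification for why $\Gamma_{D,\overline{C^0_M}}$ avoids $C_M^0$ is slightly more roundabout than necessary (the direct reason is that by Lemma \ref{MinGal} this gallery contains exactly one vertex of $\overline{C^0_M}$, namely $P_D$, so it cannot contain the alcove $C_M^0$), but it is valid.
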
 

\begin{proof} The galleries $\Gamma_{D,\overline{C^0_M}}$, $\Gamma_{D,tr}$ and $b_1\sigma\Gamma_{D,\overline{C^0_M}}^{-1}$ are minimal. We use Lemma \ref{MinimalityLm}(ii).
At first, we prove that $\Gamma_{D,\overline{C^0_M}}$, $\Gamma_{D,tr}$ and $b_1 \sigma \Gamma_{D,\overline{C^0_M}}^{-1}$ have pairwise no common alcoves. 
$\Gamma_{D,tr}$ clearly has no common alcoves with $\Gamma_{D,\overline{C^0_M}}$ and $b_1\sigma\Gamma_{D,\overline{C^0_M}}^{-1}$. Further, every 
alcove of $\Gamma_{D,\overline{C^0_M}}$ has $P_D$ as its vertex of departure from $\overline{C^0_M}$ and every alcove 
of $b_1\sigma\Gamma_{D,\overline{C^0_M}}^{-1} = \Gamma_{b_1\sigma D,\overline{C^0_M}}^{-1}$ has $b_1\sigma P_D \neq P_D$ as its vertex of departure from $\overline{C^0_M}$. 
The vertex of departure from $\overline{C^0_M}$ is uniquely determined for every alcove, thus $\Gamma_{D,\overline{C^0_M}}$ and $b_1\sigma\Gamma_{D,\overline{C^0_M}}^{-1}$
have no common alcoves. 

The condition (d) of Lemma \ref{MinimalityLm}(ii) for $\Gamma_{D,\overline{C^0_M}}$, $\Gamma_{D,tr}$ is clear.  Thus the composite gallery 
$(\Gamma_{D,\overline{C^0_M}}, \Gamma_{D,tr})$ is minimal.

Now we have to verify the condition (d) of Lemma \ref{MinimalityLm}(ii) for $(\Gamma_{D,\overline{C^0_M}}, \Gamma_{D,tr})$ and $b_1 \sigma \Gamma_{D,\overline{C^0_M}}^{-1}$. 
The vertex $b_1 \sigma P_D$ is a first vertex of $b_1 \sigma \Gamma_{D,\overline{C^0_M}}^{-1}$. Further $b_1 \sigma P_D$ is a last vertex of $\Gamma_{D,tr}$. 
The only vertex contained in $\Gamma_{D,\overline{C^0_M}}$ and in $\Gamma_{D,tr}$ is $P_D$. 
But $P_D \neq b_1 \sigma P_D$. Thus $b_1 \sigma P_D$ will also be a last vertex of the composite gallery $(\Gamma_{D,\overline{C^0_M}}, \Gamma_{D,tr})$. Thus by Lemma 
\ref{MinimalityLm}(ii) the composite gallery $\Gamma_D = (\Gamma_{D,\overline{C^0_M}}, \Gamma_{D,tr}, b_1 \sigma \Gamma_{D,\overline{C^0_M}}^{-1})$ 
is minimal.

The  length of $\Gamma_D$ is: 
\[\ell(\Gamma_D) = \ell(\Gamma_{D,\overline{C^0_M}}) + \ell(\Gamma_{D,tr}) + \ell(b_1\sigma\Gamma_{D,\overline{C^0_M}}^{-1}) + 2 = 2\ell(\Gamma_{D,\overline{C^0_M}}) + 0 + 2 = 2d_{\overline{C^0_M}}(D).\qedhere \]
\end{proof}

The gallery $\Gamma_D$ is minimal and $\ell(\Gamma_D) > 0$, thus it has a unique first vertex. If its type is $m \in \{0,1\}$, then 
$$\inv(D, b_1\sigma D) = C_M^{(-1)^m 2d_{\overline{C^0_M}}(D)}.$$ 

\noindent Now, $X^{SL_2}_w(b_1)(\bar{k})$ is non-empty if and only if $w$ has even length (Proposition \ref{propbn} below or \cite{Re} 2.2). 
The following picture illustrates this (the fat alcoves are those for which $X^{SL_2}_w(b_1)(\bar{k})$  is non-empty).
\\
\\
\\ \includegraphics[bb = 1 225 0 0 scale = 0.6]{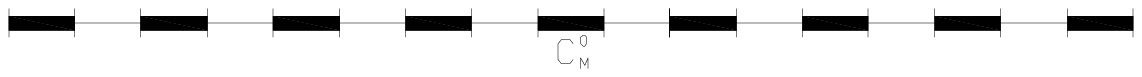}
\\
For $m\in \{0,1\}$, let $P_m$ denote the vertex of $\build$ represented by $\mathfrak{o} \oplus t^m\mathfrak{o}$. 
\begin{Prop}\label{propbn} Let $w \in W_a$. Then $X^{SL_2}_w(b_1)(\bar{k})$ is non-empty if and only if $\ell(w)$ is even.

Let now $w \in W_a$ such that $X^{SL_2}_w(b_1)(\bar{k}) \neq \emptyset$.
\begin{itemize}
\item[(i)] We have: $X^{SL_2}_1(b_1)(\bar{k}) = \{C_M^{0}\}$.
\item[(ii)] If $\ell(w) = 2i$ for $i > 0$ (i.e. $wC_M^0 = C_M^{2i}$ or $wC_M^0 = C_M^{-2i}$), let 
$$m = \left\{ \begin{array}{cl} 0 & \text{if }i \text{ is odd and } wC_M^0 = C_M^{-2i} \text{ or if }i \text{ is even and } wC_M^0 = 
C_M^{2i}, \cr 1 &  \text{if }i \text{ is odd and } wC_M^0 = C_M^{2i} \text{ or if }i \text{ is even and } wC_M^0 = C_M^{-2i}. \end{array} \right.$$ 

Then $$X^{SL_2}_w(b_1)(\bar{k}) = D^i_{\overline{C^0_M}}(P_m).$$

\end{itemize}
\end{Prop}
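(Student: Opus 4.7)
The approach parallels the proofs of Propositions \ref{propb1} and \ref{propbd}, using the minimal gallery $\Gamma_D$ stretched from $D$ to $b_1 \sigma D$ constructed above. First I would handle the nonemptiness criterion together with part (i). Since $b_1$ acts on $A_M$ by reflection about the midpoint of $C_M^0$, we have $b_1 C_M^0 = C_M^0$, and $\sigma$ fixes $C_M^0$ as well; hence for $D = C_M^0$ one has $b_1 \sigma D = D$ and $\inv(D, b_1 \sigma D) = 1$. For any alcove $D \neq C_M^0$, Lemma \ref{dCM} gives $\ell(\Gamma_D) = 2 d_{\overline{C_M^0}}(D) \geq 2$, which is an even positive integer. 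This simultaneously shows that $\ell(w)$ must be even whenever $X^{SL_2}_w(b_1)(\bar{k})$ is nonempty and that $X^{SL_2}_1(b_1)(\bar{k}) = \{C_M^0\}$, establishing (i).

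For (ii), fix $w$ with $\ell(w) = 2i > 0$ and $D \in X^{SL_2}_w(b_1)(\bar{k})$. Necessarily $D \neq C_M^0$, so by Lemma \ref{dCM} the equation $\ell(\Gamma_D) = 2i$ forces $d_{\overline{C_M^0}}(D) = i$, and $D$ has a well-defined vertex of departure $P_D \in \{P_0, P_1\}$. By the formula given just before Definition \ref{distmin}, one has $wC_M^0 = C_M^{2i}$ (respectively $C_M^{-2i}$) if and only if the first vertex of $\Gamma_D$ has type $0$ (respectively type $1$), so the task reduces to identifying this type in terms of $P_D$ and the parity of $i$.

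To do this I would use that in a minimal gallery in the tree $\build$ the types of the distinguishing vertices between consecutive alcoves alternate, so a minimal gallery of positive length $n$ has first and last vertex types agreeing when $n$ is odd and differing when $n$ is even. Applied to $\Gamma_{D,\overline{C_M^0}}$ of length $i - 1$ with last vertex $P_D$, this determines the type of its first vertex in terms of the type of $P_D$; for $i > 1$ this coincides with the first vertex of $\Gamma_D$, while for $i = 1$ one checks directly that the first vertex of $\Gamma_D$ is the unique vertex of $D$ opposite $P_D$, since the second alcove of $\Gamma_D$ is $C_M^0$ itself. The conclusion is that the first vertex of $\Gamma_D$ has the same type as $P_D$ when $i$ is even, and the opposite type when $i$ is odd. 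Matching this against the two possible signs of $wC_M^0$ yields the value of $m$ in the statement, giving the inclusion $X^{SL_2}_w(b_1)(\bar{k}) \subseteq D^i_{\overline{C_M^0}}(P_m)$; the reverse inclusion follows by running the same argument backwards, and the union is automatically disjoint by uniqueness of the vertex of departure. The only delicate point of the argument is the parity bookkeeping; everything else is essentially bookwork following the template of Proposition \ref{propbd}.
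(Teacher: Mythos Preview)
Your proof is correct and follows essentially the same approach as the paper: use Lemma \ref{dCM} to get $d_{\overline{C_M^0}}(D) = i$ and then determine the type of the first vertex of $\Gamma_D$ by parity, exactly as the paper does (the paper is simply terser, writing ``by parity of $i$'' where you spell out the alternation of vertex types along a minimal gallery). Two minor slips: the formula you invoke is the display immediately after Lemma \ref{dCM}, not before Definition \ref{distmin}; and there is no union to be disjoint in this case, since $\overline{C_M^0}$ has only the two vertices $P_0, P_1$ and exactly one of them is the relevant $P_m$.
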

\begin{Lm}\label{NonemptybLast} If $X^{SL_2}_w(b_1)(\bar{k}) \neq \emptyset$, then $\ell(w)$ is even. \end{Lm}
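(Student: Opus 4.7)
The plan is to mirror the structure of the analogous Lemmas \ref{Nonemptyb1} and \ref{NonemptybDiag}, which were each proved in a few lines by a parity argument using the minimal gallery constructed immediately above. The key input here is Lemma \ref{dCM}, which tells us that whenever $D \neq C_M^0$, the distance between $D$ and $b_1\sigma D$ is given by the explicit formula $\ell(\Gamma_D)=2d_{\overline{C^0_M}}(D)$, which is manifestly even.

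Concretely, let $w \in W_a$ and pick any alcove $D \in X^{SL_2}_w(b_1)(\bar k)$. By Lemma \ref{SLGLident} we have $\inv(D, b_1\sigma D)=w$. I would distinguish two cases according to whether $D$ coincides with $C_M^0$ or not. If $D = C_M^0$, then since $\sigma$ fixes $C_M^0$ and $b_1$ acts on $A_M$ by the reflection about the midpoint of $C_M^0$ (sending $C_M^{\alpha}$ to $C_M^{-\alpha}$), we have $b_1\sigma D = C_M^0 = D$. The unique minimal gallery from $D$ to itself has length $0$, hence $\inv(D,b_1\sigma D)=C_M^0$ corresponds to $w=1$, so $\ell(w)=0$ is even.

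If on the other hand $D \neq C_M^0$, then the gallery $\Gamma_D=(\Gamma_{D,\overline{C^0_M}},\Gamma_{D,tr},b_1\sigma\Gamma_{D,\overline{C^0_M}}^{-1})$ constructed above is defined, and by Lemma \ref{dCM} it is a minimal gallery stretched from $D$ to $b_1\sigma D$ of length $2d_{\overline{C^0_M}}(D)$. Since the relative position of two alcoves is determined by the length of the gallery stretched between them, we obtain $\ell(w)=\ell(\Gamma_D)=2d_{\overline{C^0_M}}(D)$, which is even.

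No real obstacle is expected: all the work has already been done in the construction of $\Gamma_D$ and in Lemma \ref{dCM}. The only point that requires a moment of care is the degenerate case $D=C_M^0$, which is not covered by the construction above (that construction assumes $D \notin \overline{C^0_M}$) and therefore must be handled separately by directly computing $b_1\sigma C_M^0 = C_M^0$.
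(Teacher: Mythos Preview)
Your proof is correct and follows essentially the same argument as the paper's own proof: split into the cases $D = C_M^0$ (where $w=1$) and $D \neq C_M^0$ (where $\ell(w)=\ell(\Gamma_D)=2d_{\overline{C^0_M}}(D)$ by Lemma \ref{dCM}). Your write-up is slightly more explicit about why $b_1\sigma C_M^0 = C_M^0$ and about invoking Lemma \ref{SLGLident}, but the structure and key inputs are identical.
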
  

\begin{proof} In fact, let $w \in W_a$ and $D \in X^{SL_2}_w(b_1)(\bar{k})$ be any alcove. If $D = C^0_M$, then $w = 1$ has the length $0$. 
Otherwise, the length of the minimal gallery $\Gamma_D$ constructed above is even and thus $\ell(w) = \ell(\Gamma_D)$ is even.
\end{proof}

\begin{proof}[Proof of Proposition] For (i) observe that for any alcove $D \neq C^0_M$ the gallery $\Gamma_D$ never has the length $0$, 
thus $X^{SL_2}_1(b_1)(\bar{k}) \subseteq \{C^0_M\}$. The inverse implication is clear. 

Now we prove (ii) for $i > 0$ odd and $wC_M^0 = C_M^{2i}$ (the other cases can be proven similarly). In this case $m=1$. If $D$ lies in the set 
$X^{SL_2}_w(b_1)(\bar{k})$, 
then we have: $2d_{\overline{C_M^0}}(D) = \ell(\Gamma_D) = \ell(w) = 2i$, which 
implies $d_{\overline{C_M^0}}(D) = i$. The first vertex of $\Gamma_D$ must 
have the same type as the first vertex of the gallery stretched from $C_M^0$ to $C_M^{2i}$, i.e. $0$. So by parity of $i$, the vertex of departure for $D$ 
 from $\overline{C_M^0}$ must have type $1$. Thus $D^i_{\overline{C^0_M}}(P_1) \supseteq X^{SL_2}_w(b_1)(\bar{k})$.

Conversely, if $D \in D^i_{\overline{C^0_M}}(P_1)$, then the corresponding gallery $\Gamma_D$ has 
length $2i$ and the type of its first vertex is $0$. Thus $\Gamma_D$ can be folded into the gallery stretched from  
$C_M^0$ to $C_M^{2i}$. So $X^{SL_2}_w(b_1)(\bar{k}) = D^i_{\overline{C^0_M}}(P_1)$. The first part of the Proposition follows 
from Lemma \ref{NonemptybLast} and (i), (ii). 
\end{proof}

\section{The variety structure on $X_w(b)$}  \mbox{}

In this section we choose the functorial point of view on a scheme, and work mainly with the set of its $\bar{k}$-valued points. 
In particular, a locally closed reduced sub-Ind-scheme of the affine flag manifold is uniquely determined by the set of its $\bar{k}$-valued points. 
In the last section we have determined the set-theoretical structure of $X^{SL_2}_w(b)(\bar{k})$. 
Now we determine the scheme-structure on $X_w(b)$ and on $X^{SL_2}_w(b)$. From Proposition \ref{LmGLSL} we have 
$$X_w(b) \neq \emptyset \quad \Leftrightarrow \quad X^{SL_2}_w(b) \neq \emptyset \quad \Leftrightarrow \quad X^{SL_2}_w(b)(\bar{k}) \neq \emptyset.$$ 

\noindent In all cases $X_w^{SL_2}(b)(\bar{k})$ were (disjoint unions of) sets of very similar types. At first we prove 
a general fact which shows that all these sets are locally closed subsets of $X^{SL_2}(\bar{k})$. 

\subsection{The crucial result} 

\begin{Lm}\label{LemmaCycl} Let $l \geq 0$ and $P \neq Q$ two vertices in $\build$, represented by lattices $\mathfrak{L}_1, \mathfrak{L}_2$, respectively. 
Assume that $\mathfrak{L}_1 \supseteq \mathfrak{L}_2$ and $\dimm_{\bar{k}} \mathfrak{L}_1/\mathfrak{L}_2 = l+1$. Then the following are equivalent:
\begin{itemize}
\item[(i)] The length of the minimal gallery in $\build$ stretched from $P$ to $Q$ is $l$.

\item[(ii)] The $\bar{k}[t]$-module $ \mathfrak{L}_1/\mathfrak{L}_2$ is cyclic.
\end{itemize}
The same holds for $\ratbuild$ instead of $\build$.
\end{Lm}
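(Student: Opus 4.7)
The plan is to reduce both conditions to a single statement about the elementary divisors of the inclusion $\mathfrak{L}_2 \hookrightarrow \mathfrak{L}_1$. Since $\mathfrak{o} = \bar{k}[[t]]$ is a discrete valuation ring, there is a basis $e_1, e_2$ of $\mathfrak{L}_1$ and integers $0 \leq a \leq b$ with $\mathfrak{L}_2 = t^a e_1 \mathfrak{o} \oplus t^b e_2 \mathfrak{o}$. The hypothesis $P \neq Q$ then forces $a < b$, since otherwise $[\mathfrak{L}_2] = [t^a \mathfrak{L}_1] = [\mathfrak{L}_1] = P$.

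This immediately handles (ii): as a $\bar{k}[t]$-module,
\[
\mathfrak{L}_1/\mathfrak{L}_2 \;\cong\; \bar{k}[t]/(t^a) \oplus \bar{k}[t]/(t^b),
\]
of $\bar{k}$-dimension $a+b = l+1$, and it is cyclic if and only if $a = 0$. For (i), scaling $\mathfrak{L}_2$ by $t^{-a}$ realizes $Q$ as $[e_1\mathfrak{o} \oplus t^{b-a} e_2\mathfrak{o}]$; the classes $[t^i e_1 \mathfrak{o} \oplus t^j e_2 \mathfrak{o}]$ span an apartment of $\build$ containing both $P$ and $Q$, inside which the geodesic from $P$ to $Q$ runs through exactly $b-a$ edges. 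Because $\build$ is a tree, this is also the distance between them in the whole building.

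Finally, unwinding the definitions of first and last vertex shows that a minimal gallery of length $n$ is made of $n+1$ alcoves whose two ``free'' end-vertices are at tree-distance $n+1$. Hence the minimal gallery stretched from $P$ to $Q$ has length $b-a-1$, and (i) becomes the condition $b-a-1 = l = a+b-1$, equivalent to $a = 0$, which is precisely (ii). The proof for $\ratbuild$ is word-for-word the same, working over the discrete valuation ring $\mathfrak{o}_F = k[[t]]$ and using that $\ratbuild$ is itself a tree. The only step requiring real care will be this off-by-one conversion between edges in the tree and the length convention $\ell((C_0,\ldots,C_n)) = n$ for a gallery; everything else reduces to a direct calculation with elementary divisors.
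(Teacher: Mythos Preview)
Your proof is correct and follows essentially the same route as the paper's: both put the inclusion $\mathfrak{L}_2 \subset \mathfrak{L}_1$ into elementary-divisor form $\{t^{a}e_1, t^{b}e_2\}$ with $a+b=l+1$, observe that cyclicity is equivalent to $a=0$, and compute the gallery length between $P$ and $Q$ as $b-a-1$. The only cosmetic difference is that the paper writes out the explicit lattice chain and appeals to Lemma~\ref{MinimalityLm}(i) for minimality, whereas you phrase it as a geodesic in the apartment spanned by the $[t^i e_1\mathfrak{o}\oplus t^j e_2\mathfrak{o}]$; the content is identical.
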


\begin{proof} Choose by the elementary divisor theorem a $\bar{k}[[t]]$-basis $\{v_1, v_2\}$ of $\mathfrak{L}_1$ such that \\ 
$\{t^{a_1}v_1, t^{a_2}v_2\}$ is a $\bar{k}[[t]]$-basis of $\mathfrak{L}_2$. Then $0 \leq a_1, a_2 \leq l+1$ and $a_1 + a_2 = l+1$. 
We can assume $a_1 \leq a_2$. The $\bar{k}[t]$-module $ \mathfrak{L}_1/\mathfrak{L}_2$ is cyclic if and only if $a_1 = 0$. 
Consider the following lattice chain:

$$\langle t^{a_1}v_1, t^{a_1}v_2\rangle_{\bar{k}[[t]]} \supsetneq \langle t^{a_1}v_1, t^{a_1+1}v_2\rangle_{\bar{k}[[t]]} \supsetneq ... 
\supsetneq \langle t^{a_1}v_1, t^{a_2}v_2\rangle_{\bar{k}[[t]]}, $$

\noindent which represents a minimal gallery between the vertex $P$, represented by $\langle t^{a_1}v_1, t^{a_1}v_2\rangle_{\bar{k}[[t]]}$ and $Q$, 
represented by $\langle t^{a_1}v_1, t^{a_2}v_2\rangle_{\bar{k}[[t]]}$. It is minimal by Lemma \ref{MinimalityLm}(i) (verification of the two conditions 
is straightforward). The length of this gallery is $a_2 - a_1 - 1$. This number is equal $l$ if and only if $a_1 = 0$, or equivalently 
if and only if $\mathfrak{L}_1/\mathfrak{L}_2$ is cyclic. The proof for $\ratbuild$ instead of $\build$ is the same.
\end{proof}

It is a well known fact that the alcoves around a vertex $P$ in $\build$ form a closed subset of $X^{SL_2}(\bar{k})$. There is a unique reduced closed subscheme 
of $X^{SL_2}$ such that its $\bar{k}$-valued points are exactly the alcoves around $P$. This subscheme is defined over $k$ and isomorphic 
to $\mathbb{P}^1_k$ as a $k$-scheme.

\begin{Def}\label{DefFunk} Let $P$ be a vertex in $\ratbuild$. Let $V$ be a subset of $\mathbb{P}^1_k(k)$. Thus $V$ corresponds to a finite set of 
alcoves in $\ratbuild$, having distance 0 to $P$. Let $l \geq 0$. We set 
$$ \mathfrak{F}_{P,V,l}(\bar{k}) = \left\{ D \colon \begin{array}{cl} D \text{ is an alcove in } \build \text{ which has distance } l 
\text{ to } P, \cr \text{ and the minimal gallery stretched from } P \cr \text{to } D \text{ does not pass through } V\end{array}\right\},$$
and let $\mathfrak{F}_{P,V,l}(k)$ be the intersection of $\mathfrak{F}_{P,V,l}(\bar{k})$ with the set of alcoves lying in $\ratbuild$.
\end{Def}

\begin{Prop} Let $P, V, l$ be as in Definition \ref{DefFunk}. Then $\mathfrak{F}_{P,V,l}(\bar{k})$ is a locally closed subset of 
$X^{SL_2}(\bar{k})$. Denote by $\mathfrak{F}_{\bar{k}}$ the corresponding induced reduced sub-Ind-scheme of $X^{SL_2}_{\bar{k}}$. 
The Ind-scheme $\mathfrak{F}_{\bar{k}}$ is a scheme, it is defined over $k$ by a scheme $\mathfrak{F}$, and there is an isomorphism of $k$-schemes: 
$$\mathfrak{F} \cong \mathbb{A}_k^{l} \times (\mathbb{P}^1_k - V).$$ \end{Prop}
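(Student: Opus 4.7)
The plan is to construct an explicit $k$-morphism
\[ \psi \colon (\mathbb{P}^1_k - V) \times \mathbb{A}^l_k \longrightarrow X^{SL_2}_k \]
which is a locally closed immersion whose image on $\bar{k}$-points is $\mathfrak{F}_{P,V,l}(\bar{k})$; this simultaneously realises $\mathfrak{F}_{\bar{k}}$ as the image, provides the scheme structure, and descends to the $k$-structure. Since $SL_2(F)$ acts transitively on the type-$m$ vertices of $\ratbuild$ for each $m \in \{0,1\}$, I first reduce to the case where $P$ is represented by $\mathfrak{L}_P = \mathfrak{o}_F \oplus \mathfrak{o}_F$; then $\mathbb{P}^1_k$ is canonically identified with $\mathbb{P}(\mathfrak{L}_P/t\mathfrak{L}_P)$ and $V$ becomes a finite set of $k$-rational points. (The type-$1$ case is analogous after translation by a fixed element of $GL_2(F)$.)

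To define $\psi$, I cover $\mathbb{P}^1_k$ by the standard affines $U_0 = \{[x:1]\}$ and $U_1 = \{[1:y]\}$. On $U_0 \times \mathbb{A}^l_k$, for an $R$-point $(x, a_1, \ldots, a_l)$ I put
\[ v_j := \Bigl(x + \sum_{i=1}^{j} a_i t^i,\; 1\Bigr) \in R[[t]]^2, \qquad \mathfrak{L}^{(j)} := R[[t]]\, v_{j-1} + R[[t]]\, (t^j, 0), \]
with $v_0 = (x,1)$ and $j = 1, \ldots, l+1$. A direct induction shows that $R[[t]]^2 = \mathfrak{L}^{(0)} \supsetneq \mathfrak{L}^{(1)} \supsetneq \cdots \supsetneq \mathfrak{L}^{(l+1)}$ is a chain of $R[[t]]$-lattices with $\wedge^2 \mathfrak{L}^{(j)} = t^j R[[t]]$ and all successive quotients locally free of rank $1$ over $R$. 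Depending on the parity of $l$, the last two lattices, rescaled by the unique power of $t$ imposing the $X^{SL_2}$-normalisation $\wedge^2 \mathfrak{L}_0 = R[[t]]$, give an $R$-point $(\mathfrak{L}_0 \subsetneq \mathfrak{L}_1)$ of $X^{SL_2}$. The analogous formula on $U_1$ glues with this, and by construction $\psi$ factors through the finite-type piece $X^{SL_2, l+1}$, cf.\ (\ref{affflmfdfrml}).

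The image of $\psi$ on $\bar{k}$-points is identified with $\mathfrak{F}_{P,V,l}(\bar{k})$ by matching the parametrisation with the iterative gallery construction of Section~3 --- $[x:y]$ parametrises the choice of the first alcove $C_0$, and each $a_i$ parametrises an $\mathbb{A}^1$-worth of lines in $\mathfrak{L}^{(i)}/t\mathfrak{L}^{(i)}$ different from the one distinguished by $Q_{i-1}$ --- together with Lemma \ref{LemmaCycl} ensuring that the distance equals exactly $l$. To upgrade $\psi$ to a locally closed immersion, I construct a functorial inverse on the subscheme of $X^{SL_2, l+1}$ cut out by the closed condition ``distance exactly $l$ from $P$'' and the open condition ``first alcove not in $V$'': from a lattice pair in the image one recovers $x$ as the class of $v_0$ in $\mathfrak{L}^{(0)}/t\mathfrak{L}^{(0)} = R^2$ (a line, i.e.\ an $R$-point of $\mathbb{P}^1$), and reads each $a_i$ off from the successive intermediate lattices of the unique minimal gallery between $\mathfrak{L}_P$ and the alcove (unique since $\build$ is a tree). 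Since $\psi$ is $k$-defined and the source is reduced, the resulting isomorphism onto its image descends to the claimed isomorphism $\mathfrak{F} \cong (\mathbb{P}^1_k - V) \times \mathbb{A}^l_k$.

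The main technical hurdle is making the reconstruction of the intermediate lattices of the minimal gallery functorial in $R$: this reduces, via Lemma \ref{LemmaCycl} and an elementary-divisor-type argument, to exhibiting the unique cyclic $R[t]$-module filtration of $\mathfrak{L}_P/\mathfrak{M}_1$ (with $\mathfrak{M}_1$ the ``deeper'' lattice of the alcove, after rescaling) as a universal family; the remainder is direct linear algebra over $R[[t]]$.
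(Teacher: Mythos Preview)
Your approach is correct in outline but genuinely different from the paper's. The paper first reduces to $P = [\mathfrak{o}_F \oplus \mathfrak{o}_F]$ as you do, but then treats only the case $V = \{C^0_M\}$: it embeds everything, via the map (\ref{affflmfdfrml}), into the finite-dimensional partial flag variety $\mathfrak{Flag}_{l+1,l+2}(N)$ with $N = \mathfrak{L}_0/t^{l+1}\mathfrak{L}_0$, passes to an explicit affine coordinate chart $Y \cong \mathbb{A}^{(l+1)^2+l}$, and shows (Lemmas \ref{LN_1} and \ref{LemmaNeq}) that $\mathfrak{F}_{P,\{C^0_M\},l}(\bar{k})$ is cut out there by \emph{linear} equations, hence is $\mathbb{A}^{l+1}$. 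General $V$ is then obtained in a separate step by pulling back along the ``first alcove'' projection $\beta \colon \mathfrak{F}_{P,\{C^0_M\},l} \to \mathfrak{F}_{P,\{C^0_M\},0} \cong \mathbb{A}^1$.

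Your direct lattice-chain parametrisation is essentially the affine-root-subgroup description of the Schubert cell that the paper discusses afterwards as an alternative proof (and checks agrees with its own coordinates). What the paper's route buys is that the ``locally closed immersion'' claim becomes trivial: the image is literally a linear subspace of an affine open in a projective variety, so no functorial reconstruction of the intermediate gallery is needed. What your route buys is a more intrinsic, one-shot argument that handles all $V$ at once. The step you flag as the ``main technical hurdle'' --- recovering the intermediate lattices functorially in $R$ --- is exactly the place where the paper's finite-dimensional reduction pays off; if you want to complete your argument cleanly, the simplest fix is to compose your $\psi$ with the embedding (\ref{affflmfdfrml}) and read off the inverse in coordinates, which amounts to reproducing the paper's Step~4. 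One small correction: the condition ``distance exactly $l$ from $P$'' is locally closed, not closed, so your description of the target of the inverse should be adjusted accordingly.
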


\begin{proof} By homogeneity under the action of $GL_2(F)$, we can assume that $P$ is represented by 
$\mathfrak{L}_0 = \StGitF$. Let us first prove the Proposition for $V$ consisting of only one element. In this case we can assume that this element is 
$C_M^0$ and its vertices are $P$ and $Q$, where $Q$ is represented by $\mathfrak{L}_1 = \tStGitF$.

Before going on with the proof, we make the following
\begin{Rem} {\rm For $V$ and $P$ as described above, the set $\mathfrak{F}_{P,V,l}(\bar{k})$ is nothing other than the open Schubert 
cell for the element $v \in W_a$ with $vC^0_M = C^{-(l+1)}_M$. There is another proof that the Schubert cell for $v \in W_a$ is isomorphic
 to the affine space of dimension $\ell(v)$. This proof works similarly in the affine case, as in the finite case (which is given in \cite{Bo}, 14.12, the 
first theorem). Nevertheless, we will give another proof below and then compare the two proofs. }
\end{Rem}

\noindent \textbf{The sketch of the proof.} Consider $N = \mathfrak{L}_0/t^{l+1}\mathfrak{L}_0$ as a $k$-vector space carrying a nilpotent action of $t$. 
Let $\mathfrak{Flag}^t_{l+1,l+2}(N) \subseteq \mathfrak{Flag}_{l+1,l+2}(N)$ be the closed subscheme of the partial flag manifold of $N$,
 parametrizing the chains of $t$-stable subspaces of $N$ of dimensions $l+1, l+2$. Let also $$S := \mathfrak{Flag}^t_{l+1, l+2}(N)_{\redd}.$$ 
\noindent In particular $S$ has the same $k$-valued points as $\mathfrak{Flag}^t_{l+1,l+2}(N)$. We have the following two closed immersions 
(the vertical one is given by the inverse of (\ref{affflmfdfrml})):

\begin{equation}\nonumber \xymatrix{ X^{SL_2} & \\ S \ar[u] \ar[r] & \mathfrak{Flag}_{l+1,l+2}(N)} \end{equation}
In the proof we consider the following commutative diagram of inclusions:

\begin{equation} \nonumber \xymatrix{\mathfrak{F}_{P,V,l}(k) \ar[r] \ar[dr] \ar[ddr]& X^{SL_2}(k) & \\ 
& S(k)  \ar[u] \ar[r] & \mathfrak{Flag}_{l+1, l+2}(N)(k) \\ & Y(k) \ar[ur] & }  \end{equation}

\noindent where $Y$ will later be defined as some open subscheme of $\mathfrak{Flag}_{l+1,l+2}(N)$ which is isomorphic to the affine space over $k$ of 
appropriate dimension. We will show that the two diagonal arrows from $\mathfrak{F}_{P,V,l}(k)$ exist and that there is a unique closed reduced subscheme 
of $Y$, defined by vanishing of linear polynomials whose $k$-rational points are $\mathfrak{F}_{P,V,l}(k)$.

\noindent \textbf{First step.} We prove that $\mathfrak{F}_{P,V,l}(k) \rightarrow X^{SL_2}(k)$ factorizes through $S(k) \rightarrow X^{SL_2}(k)$. 

Observe that every alcove with distance $l$ to $P$ has a unique representing lattice chain 
$\mathfrak{L}^{\prime} \supsetneq \mathfrak{L} \supsetneq t\mathfrak{L}^{\prime}$ such that $\mathfrak{L}, \mathfrak{L}^{\prime} \subseteq \mathfrak{L}_0$, the
vertex represented by $\mathfrak{L}$ has greater distance to $P$, and dim$_k \mathfrak{L}_0/ \mathfrak{L} = l+1$ 
($\mathfrak{L}$ is uniquely determined for reasons of dimension and then the uniqueness of $\mathfrak{L}^{\prime}$ follows from the uniqueness of minimal 
galleries). After these choices $\mathfrak{L}_0/ \mathfrak{L}$ is a cyclic $k[t]$-module by Lemma \ref{LemmaCycl}. 
We have $\mathfrak{L}, \mathfrak{L}^{\prime} \supseteq t^{l+1}\mathfrak{L}_0$. 

The projections $\mathfrak{L} \rightarrow L := \mathfrak{L}/(t^{l+1}\mathfrak{L}_0)$, $\mathfrak{L}^{\prime} \rightarrow L^{\prime} := 
\mathfrak{L}^{\prime}/(t^{l+1}\mathfrak{L}_0)$ give the embedding $\mathfrak{F}_{P,V,l}(k) \subset S(k)$ (which is injective, since the maps 
$\mathfrak{F}_{P,V,l}(k) \rightarrow X^{SL_2}(k)$, $S(k) \rightarrow X^{SL_2}$ are). Note that $N/L \cong \mathfrak{L}_0/\mathfrak{L}$ and 
$N/L^{\prime} \cong \mathfrak{L}_0/\mathfrak{L}^{\prime}$ are $k[t]$-cyclic too. Thus:  
\begin{equation}\label{FPVl_1}\mathfrak{F}_{P,V,l}(k) \subseteq \left\{ (L \subsetneq L^{\prime}) \in S(k) \colon N/L \text{ and } N/L^{\prime} 
\text{ are cyclic as } k[t]\text{-modules} \right\},\end{equation} 

\noindent \textbf{Second step.} In this step we reformulate the condition that the minimal gallery stretched from the vertex, represented by $\mathfrak{L}$, to $P$ 
does not pass through $Q$. 

Let $\{e_1, e_2\}$ be the image of the canonical $\mathfrak{o}_F$-basis of $\mathfrak{L}_0$ under the projection $\mathfrak{L}_0 \twoheadrightarrow N$. Then $\{ 
e_1,te_1, ..., t^{l}e_1, e_2, te_2, ..., t^{l}e_2\}$ is a $k$-basis of $N$. Let $N_1 = \mathfrak{L}_1 / t^{l+1}\mathfrak{L}_0$. Then $N_1$ is a $(2l+1)$-dimensional 
subspace of $N$ and $\{ e_1,te_1, ..., t^{l}e_1, te_2, ..., t^{l}e_2\}$ is its basis. 

\begin{Lm} \label{LN_1}
Let $\mathfrak{L}^{\prime} \supsetneq \mathfrak{L} \supsetneq t\mathfrak{L}^{\prime}$ be a representative of an alcove $D$, having distance $l$ to $P$,
chosen as in step one. Let $L:= \mathfrak{L}/t^{l+1}\mathfrak{L}_0$. The following are equivalent:
\begin{itemize}
\item[(i)] The minimal gallery from $P$ to the vertex represented by $\mathfrak{L}$ does not pass through $Q$.
\item[(ii)] $L \not\subseteq N_1$.
\end{itemize}
\end{Lm}

\begin{proof}[Proof of Lemma] We have $t^{-1}\mathfrak{L}_1 \supsetneq \mathfrak{L}_0 \supsetneq \mathfrak{L}$ and 
$\dimm_k((t^{-1}\mathfrak{L}_1)/ \mathfrak{L}_0) = 1, \dimm_k(\mathfrak{L}_0)/ \mathfrak{L}) = l+1$. Hence: 
$\dimm_k((t^{-1}\mathfrak{L}_1)/ \mathfrak{L}) = l+2$. Further we have: $\mathfrak{L}_1 \supseteq t\mathfrak{L}$. By the elementary divisor theorem, 
choose a $k[[t]]$-basis $\{ v_1, v_2\}$ of $\mathfrak{L}_1$ such that $\{t^{a_1}v_1, t^{a_2}v_2\}$ is a $k[[t]]$-basis of $t\mathfrak{L}$. 

Now, (i) is equivalent to the statement that the minimal gallery 
from $Q$ to $D$ has length $l+1$. By Lemma \ref{LemmaCycl} and the above dimension counting, this is equivalent to 
$(t^{-1}\mathfrak{L}_1)/\mathfrak{L}$ being a cyclic $k[t]$-module. This is equivalent to $\mathfrak{L}_1/t\mathfrak{L}$ being  a cyclic $k[t]$-module.

From this follows (ii): By the cyclicity requirement we must have $a_1=0$ or $a_2 = 0$. Assume $a_1 = 0$. 
Then $t^{-1}v_1 \in \mathfrak{L} - \mathfrak{L}_1$ and its image modulo $t^{l+1}\mathfrak{L}_0$ lies in $L - N_1$.

To prove the converse, assume that $\mathfrak{L}_1/t\mathfrak{L}$ is not cyclic. We must 
have $a_1, a_2 \neq 0$ (otherwise $v_1$ or $v_2$  would induce a cyclic generator of $\mathfrak{L}_1/t\mathfrak{L}$). 
Thus $\langle tv_1, tv_2\rangle_{k[[t]]} \supseteq t\mathfrak{L}$. This implies $\mathfrak{L}_1 = \langle v_1, v_2~\rangle_{k[[t]]} \supseteq \mathfrak{L}$, 
which is equivalent to $N_1 \supseteq L$. \end{proof}

Thus, by Definition, the set $\mathfrak{F}_{P,V,l}(k)$ is exactly the subset of the set on the right hand side in 
(\ref{FPVl_1}), consisting of all chains $(L \subsetneq L^{\prime}) \in S(k)$ with $L \subsetneq N_1$:
\begin{equation}\label{beschrFD} \noindent \mathfrak{F}_{P,V,l}(k) = \left\{ (L \subsetneq L^{\prime}) \in S(k) \colon N/L, N/L^{\prime} \text{ are cyclic as } k[t]\text{-modules}, 
L \not\subseteq N_1 \right\}.\end{equation}

\noindent \textbf{Third step.} We have the following 

\begin{Lm}\label{LemmaNeq} $$\mathfrak{F}_{P,V,l}(k) = \left\{ L\subsetneq L^{\prime} \subsetneq N \colon \begin{array}{cl}\exists v = e_2 + a_0e_1 + a_1te_1 + ... + a_{l}t^{l}e_1, 
 \cr L = \langle v \rangle_{k[t]}, L^{\prime} = L \oplus \langle t^le_1\rangle_k, a_0, ..., a_l \in k \end{array}\right\}.$$ 
The vector $v$ is uniquely determined by the chain $L \subsetneq L^{\prime}$.
\end{Lm}

\begin{proof}[Proof of Lemma]
To prove the inclusion `$\subseteq$' let $L\subsetneq L^{\prime} \subsetneq N$ be an element of $\mathfrak{F}_{P,V,l}(k)$ as described in 
(\ref{beschrFD}). In particular 
$L \not\subseteq N_1$. Then, there exists a $v = e_2 + w \in L$ with $w \in N_1$. Since $L$ is $t$-invariant, we can by successive replacing of $v$ 
by $v - c_it^iv$ with a suitable 
$c_i \in k$ for $1 \leq i \leq l$ assume that $v$ is of the following form: $v = e_2 + a_0e_1 + a_1te_1 + ... + a_{l}t^{l}e_1$. This vector $v$ 
is unique in $L$: this follows from $\dimm_k L = l+1$ and linear independence of $v, tv, ..., t^lv$. Further $L^{\prime}$ is uniquely determined by $L$: 
$L^{\prime}/L$ is a one-dimensional $t$-invariant submodule of $N/L = \langle \bar{e_1}, ...,	 t^l\bar{e_1} \rangle_k$, so 
$L^{\prime} = L \oplus \langle t^{l}e_1 \rangle_k$.

The inverse inclusion is easy: $v \in L - N_1$ and $e_1$ generates the $k[t]$-modules $N/L$ and $N/L^{\prime}$.\end{proof}

\noindent \textbf{Fourth step.} Finally, we prove the existence of a locally closed subscheme $\mathfrak{F}$ of $S$, isomorphic to $\mathbb{A}^{l+1}_k$ such that 
$\mathfrak{F}(\bar{k}) = \mathfrak{F}_{P,V,l}(\bar{k})$.

The partial flag manifold $\mathfrak{Flag}_{l+1,l+2}(N)$ has (\cite{Bo} 10.3) the open subscheme $Y$, defined over $k$, whose $\bar{k}$-valued points are:

$$Y(\bar{k}) = \left\{ E_0 \subsetneq E_1 \subsetneq N \colon \begin{array}{cl}\dimm_k E_0 = l+1, \dimm_k E_1 = l+2, 
E_0 \cap \langle e_1 \rangle_{\bar{k}[t]} = 0, \cr E_1 \cap \langle e_1, te_1, ..., t^{l-1}e_1 \rangle_{\bar{k}} = 0 \end{array} \right\}.$$

\noindent As a $k$-scheme, it is isomorphic to $\mathbb{A}^{(l+1)^2+l}_k$ with the affine coordinate ring 

\noindent $k[a_{ij}, b_{p} \colon 0 \leq i,j < l+1, 0 \leq p < l]$, where a parametrization is given by 

$E_0 = \langle t^i e_2 + \sum_j a_{ij}t^{j}e_1 \colon i = 0, ..., l \rangle_k$, 

$E_1 = E_0 \oplus \langle t^{l}e_1 + b_0e_1 + ... + b_{l-1}t^{l-1}e_1 \rangle_k$.

\noindent All the arguments in the steps one, two and three will also work if we replace $k$ by $\bar{k}$. 
Therefore from Lemma  \ref{LemmaNeq} immediately follows that $\mathfrak{F}_{P,V,l}(\bar{k}) \subset Y(\bar{k}) $.
Moreover, the closed subscheme $\mathfrak{F}$ of $Y$ defined by the ideal 
$$( a_{ij} - a_{0,j-i}, a_{i^{\prime}j^{\prime}}, b_p \colon 0 \leq i \leq j < l+1, 0 \leq j^{\prime} < i^{\prime} < l+1, 0 \leq p < l),$$ 
\noindent of $k[a_{i,j}, b_{p} \colon 0 \leq i,j < l+1, 0 \leq p < l]$ satisfies $\mathfrak{F}(\bar{k}) = \mathfrak{F}_{P,V,l}(\bar{k})$
and hence $\mathfrak{F}_{P,V,l}(\bar{k})$ is a locally closed subset of $X^{SL_2}(\bar{k})$. 
Further, we have: $\mathfrak{F} \cong \mathbb{A}^{l+1}_k$. The uniqueness of $\mathfrak{F}$ follows from the fact that a reduced $k$-subscheme 
 of $X^{SL_2}$ is uniquely determined by its $\bar{k}$-valued points.

In the following, we denote the scheme $\mathfrak{F}$ by $\mathfrak{F}_{P,V,l}$. Thus we have:
$$\mathfrak{F}_{P,V,l} = \Specc k[a_{00}, ..., a_{0l}] \cong \mathbb{A}^{l+1}_k.$$

\noindent \textbf{Fifth step.} Now let $V$ be arbitrary. We can assume that $C^0_M \in V$. 
Then $\mathfrak{F}_{P, V, l}(\bar{k}) \subseteq \mathfrak{F}_{P, C^0_M, l}(\bar{k})$. Consider the morphism of $k$-schemes 
$$\beta: \mathfrak{F}_{P, C^0_M, l} = \Specc k[a_{00}, ..., a_{0l}] \rightarrow \mathfrak{F}_{P, C^0_M, 0} = \Specc k[a^{\prime}_{00}]$$ 
which is defined by using the coordinate rings from the last step: 
$$\beta^0:k[a^{\prime}_{00}] \rightarrow k[a_{00}, ..., a_{0l}]\text{, } a^{\prime}_{00} \mapsto a_{00}.$$

\noindent One sees easily that this morphism sends an alcove $D \in \mathfrak{F}_{P,C^0_M,l}(\bar{k})$ to the first alcove of the minimal gallery, 
stretched from $P$ to $D$. Now an easy computation shows that there is an (unique) open subscheme of $\mathfrak{F}_{P,C^0_M,0}$, isomorphic  
to $\mathbb{P}^1_k - V$ whose $\bar{k}$-rational points are $\mathfrak{F}_{P,V,0}(\bar{k})$.  

We clearly have $$\mathfrak{F}_{P, V, l}(\bar{k}) = \beta^{-1} (\mathfrak{F}_{P,V,0} (\bar{k})),$$ 
(as sets) and thus $\mathfrak{F}_{P, V, l}(\bar{k})$ is a locally closed subset of $X^{SL_2}(\bar{k})$ and there is a unique open subscheme  of 
$\mathfrak{F}_{P,C^0_M, l}$, whose $\bar{k}$-valued points are $\mathfrak{F}_{P, V, l}(\bar{k})$. 
We denote this subscheme by $\mathfrak{F}_{P, V, l}$. It fits into the following Cartesian diagram:
\begin{equation} \xymatrix{ \mathfrak{F}_{P, V, l} \ar[r] \ar[d] & \mathfrak{F}_{P, V, 0} \ar[d] \\ \mathfrak{F}_{P,C^0_M,l} \ar[r]& \mathfrak{F}_{P,C^0_M,0}, } \end{equation} 
where the lower map is $\beta$. Now $\mathfrak{F}_{P,C^0_M,0} \cong \mathbb{A}_k^1$, $\mathfrak{F}_{P, V, 0} \cong \mathbb{P}^1_k - V$, $\mathfrak{F}_{P,C^0_M,l}  \cong \mathbb{A}_k^{l+1}$, 
and since $\beta$ is just the projection on the last factor and $\mathfrak{F}_{P, V, 0} \hookrightarrow \mathfrak{F}_{P,C^0_M,0}$ is the inclusion
$\mathbb{P}^1_k - V \hookrightarrow \mathbb{P}^1_k - \{pt\} = \mathbb{A}^1_k$, we have:
\[ \mathfrak{F}_{P, V, l} \cong \mathbb{A}_k^{l} \times (\mathbb{P}^1_k - V).\qedhere \] 
\end{proof}

\noindent \textbf{Comparison of the two proofs.} We go back to the assumption that $P$ is represented by $\mathfrak{L}_0 = \mathfrak{o} \oplus \mathfrak{o}, 
V = \{C^0_M\}$. In the first four steps we essentially proved that the open Schubert cell associated to $v \in W_a$ with $vC^0_M = C^{-(l+1)}_M$ 
is isomorphic to $\mathbb{A}^{l+1}_k$. There exists also an other proof of this fact which works similarly as in the finite case 
(for the finite case compare \cite{Bo}, 14.12, the first theorem). This proof uses affine root subgroups of $SL_2$. 
They are given by ($n \in \mathbb{Z}$):  

$$U_{(\alpha,n)} = \{\begin{pmatrix} 1 & ct^n\\ 0 & 1\end{pmatrix} \colon c \in \bar{k}\}, \quad
U_{(-\alpha,n)} = \{\begin{pmatrix} 1 & 0\\ ct^n & 1\end{pmatrix} \colon c \in \bar{k}\},$$
where $\alpha$ denotes the unique positive (finite) root of $SL_2$. We have:
$$(\beta, n) \text{ is positive if and only if } \begin{cases} \beta = \alpha &\text{ and } n \geq 0, \text{ or }\\ 
\beta = -\alpha &\text{ and } n > 0. \end{cases}$$
Thus $(\beta,n)$ is positive if and only if $U_{(\beta,n)} \subset I^{SL_2}$. The affine Weyl group acts on the set of all affine roots.

Consider for every $v \in W_a$ the morphism of varieties (the left hand side carries a natural variety structure):
$$\psi \colon \prod_{(\beta, n) > 0, v^{-1}.(\beta, n) < 0}U_{(\beta,n)} \rightarrow I^{SL_2}vI^{SL_2}/I^{SL_2}, (x_n)_n \mapsto (\prod_{n}x_n) vI^{SL_2}.$$
In the other proof one shows that $\psi$ is an isomorphism.

If $l+1 = 2s > 0$ is even (the other cases are essentially the same) and $v = \begin{pmatrix}t^s & 0 \\ 0 & t^{-s}\end{pmatrix}$ then 
$$\mathfrak{F}_{P, C^0_M, l}(\bar{k}) = I^{SL_2}vI^{SL_2}/I^{SL_2}$$  

\noindent and we have the following morphisms of varieties:
\begin{equation}\label{compo} \prod_{0 \leq n < l+1}U_{(\alpha,n)} \stackrel{\psi}{\longrightarrow} I^{SL_2}vI^{SL_2}/I^{SL_2} = \mathfrak{F}_{P,C^0_M, l}(\bar{k}) 
\longrightarrow \Specc \bar{k}[a_{00},...,a_{0l}],\end{equation} 
where the variety on the right hand side is the closed subvariety of $Y$ constructed in our proof. We also  constructed the map on the right and proved 
it to be an isomorphism. 

The image of a $\bar{k}$-valued point $(c_n)_{n=0}^l$ on the left,  under the composition of the two morphisms, is
 a chain of subspaces of $N \otimes \bar{k}$ (where $N = \mathfrak{L}_0/t^{l+1}\mathfrak{L}_0$) which is determined by the alcove
 
\noindent $\psi((c_i)_{n=0}^l) = \begin{pmatrix}t^s & t^{-s}c_0 + ... + t^{s-1} c_l \\ 0 & t^{-s}\end{pmatrix}C^0_M$. With notations as in the proof, 
choosing representatives inside $\mathfrak{L}_0$ of appropriate codimensions and dividing out $t^{l+1}\mathfrak{L}_0$, gives the representing chain 
$$\langle e_2 + c_0e_1 +c_1te_1 + ... + c_lt^le_1\rangle_{\bar{k}[t]} \subsetneq \langle e_2 + c_0e_1 +c_1te_1 + ... + c_lt^le_1\rangle_{\bar{k}[t]} 
\oplus \langle t^le_1\rangle_{\bar{k}}$$
inside $N \otimes \bar{k}$.

The vector $v = e_2 + c_0e_1 +c_1te_1 + ... + c_lt^le_1$ is exactly the same as in Lemma \ref{LemmaNeq}. In the fourth step $\mathfrak{F}_{P,V,l}$ 
was parametrized by the coordinates of this unique vector $v$. Thus the composition of the two morphisms in (\ref{compo}) is given 
on $\bar{k}$-valued points by $(c_n)_{n = 0}^l \mapsto (c_n)_{n=0}^l$. 

\begin{Cor} \label{lmdp} \mbox{}
\begin{itemize}
\item[(i)] Let $P$ be a vertex in $\ratbuild, n \geq 0$ and $V = \mathbb{P}^1_k(k)$ be the set of all alcoves in $\ratbuild$, having $P$ 
as a vertex, considered as a finite variety over $k$. Then there is a locally closed reduced subscheme of $X^{SL_2}$, defined over $k$, whose 
$\bar{k}$-valued points are exactly $D^n_{\ratbuild}(P) = \mathfrak{F}_{P, V, n-1}(\bar{k})$. We denote this scheme again by $D^n_{\ratbuild}(P)$. There is 
an isomorphism of $k$-schemes: $$D^n_{\ratbuild}(P) \cong \mathbb{A}_k^{n-1} \times (\mathbb{P}^1_k - \mathbb{P}^1_k(k)).$$

\item[(ii)] Let $P$ be a vertex in  $A_M, n \geq 0$ and $V$ be the set of all alcoves in $A_M$, having $P$ 
as a vertex (thus $V$ has two elements). Then there is a locally closed reduced subscheme of $X^{SL_2}$, defined over $k$, whose 
$\bar{k}$-valued points are exactly $D^n_{A_M}(P) = \mathfrak{F}_{P, V, n-1}(\bar{k})$. We denote this scheme again by $D^n_{A_M}(P)$. There is 
an isomorphism of $k$-schemes: $$D^n_{A_M}(P) \cong \mathbb{A}_k^{n-1} \times (\mathbb{P}^1_k - \{ 0, \infty\}).$$

\item[(iii)] Let $m \in \{0,1\}$ and $P_m$ be the vertex represented by $\mathfrak{o} \oplus t^m\mathfrak{o}$. Then there is a locally closed reduced subscheme 
of $X^{SL_2}$, defined over $k$, whose $\bar{k}$-valued points are exactly $D^n_{\overline{C_M^0}}(P_m) = \mathfrak{F}_{P_m, C^0_M, n-1}(\bar{k})$. We denote 
this scheme by $D^n_{C_M^0}(P_m)$. There is an isomorphism of $k$-schemes: $$D^n_{C_M^0}(P_m) \cong \mathbb{A}_k^n.$$ 
\end{itemize}
\end{Cor}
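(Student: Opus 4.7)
The plan is to reduce each of the three items directly to the preceding Proposition by matching the set $D^n_{\mathfrak{C}}(P)$ with $\mathfrak{F}_{P,V,n-1}(\bar{k})$ for a carefully chosen finite $k$-subvariety $V \subset \mathbb{P}^1_k(k)$ of alcoves, and then reading off the corresponding scheme $\mathbb{A}_k^{n-1} \times (\mathbb{P}^1_k - V)$.

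The core set-theoretic identification I will use in all three cases is the following equivalence: for a vertex $P$ in a full connected subcomplex $\mathfrak{C} \subset \build$, an alcove $D$ satisfies $P_D = P$ and $d_{\mathfrak{C}}(D) = n$ (with $n \geq 1$) if and only if $D$ has distance $n-1$ from $P$ and the unique minimal gallery stretched from $P$ to $D$ does not begin with an alcove of $\mathfrak{C}$ adjacent to $P$. The forward direction is clear because $\Gamma_{D,\mathfrak{C}}$ has no alcove in $\mathfrak{C}$ by construction of the vertex of departure and has length $n-1$, so its inverse starts at $P$ and stays outside $\mathfrak{C}$. Conversely, the fact that $\build$ is a tree and $\mathfrak{C}$ is a full connected subcomplex means a minimal gallery leaving $P$ in a direction outside $\mathfrak{C}$ can never come back to $\mathfrak{C}$; hence the reversed minimal gallery from $P$ to $D$ is automatically $\Gamma_{D,\mathfrak{C}}^{-1}$, so $P = P_D$ and $d_{\mathfrak{C}}(D) = n$. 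Thus if $V$ denotes the set of alcoves in $\mathfrak{C}$ having $P$ as a vertex (equivalently, the $q+1$, $2$, or $1$ alcoves of $\mathfrak{C}$ adjacent to $P$), we get $D^n_{\mathfrak{C}}(P) = \mathfrak{F}_{P,V,n-1}(\bar{k})$.

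With this identification in hand, the three statements are immediate specializations of the Proposition. For (i), $P \in \ratbuild$ and $V$ is the set of alcoves of $\ratbuild$ around $P$, which is exactly $\mathbb{P}^1_k(k)$ (all $q+1$ such alcoves are $k$-rational, parametrized by $SL_2(\mathfrak{o}_F)$-orbits of lattice chains through $P$), so the Proposition yields $D^n_{\ratbuild}(P) \cong \mathbb{A}_k^{n-1} \times (\mathbb{P}^1_k - \mathbb{P}^1_k(k))$. For (ii), the apartment vertices of $A_M$ lie in $\ratbuild$, so $P$ is in $\ratbuild$, and $V$ consists of the exactly two $k$-rational alcoves of $A_M$ containing $P$, which after an $SL_2(F)$-change of coordinates can be identified with $\{0,\infty\} \subset \mathbb{P}^1_k(k)$; the Proposition gives $D^n_{A_M}(P) \cong \mathbb{A}_k^{n-1} \times (\mathbb{P}^1_k - \{0,\infty\})$. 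For (iii), $\overline{C_M^0}$ contains a unique alcove, $C_M^0$ itself, which is the single element of $V$; the Proposition yields $\mathbb{A}_k^{n-1} \times (\mathbb{P}^1_k - \{C_M^0\}) \cong \mathbb{A}_k^{n-1} \times \mathbb{A}_k^1 \cong \mathbb{A}_k^n$.

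The only real point to verify carefully is the equivalence asserted above; everything else is a direct translation. The step I expect to need the most care is the converse direction of the equivalence in case (ii), since $A_M$ is not a subtree in the full sense that $\ratbuild$ is (it is one-dimensional, so trivially a tree, but one must make sure that a minimal gallery from $P$ not beginning with one of the two apartment alcoves adjacent to $P$ indeed avoids $A_M$ thereafter); this follows from the same tree argument since $\build$ has no cycles, so once a gallery departs from $A_M$ at $P$ it cannot re-enter $A_M$ without re-passing through $P$. Given this, all three isomorphisms drop out of the Proposition, with the descent to $k$ automatic because the chosen sets $V$ consist of $k$-rational alcoves and hence constitute $k$-subvarieties of $\mathbb{P}^1_k$.
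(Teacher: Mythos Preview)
Your proof is correct and follows the same approach as the paper, which simply says ``It follows directly from the Proposition.'' You are more explicit than the paper in justifying the set-theoretic identifications $D^n_{\mathfrak{C}}(P) = \mathfrak{F}_{P,V,n-1}(\bar{k})$ (which the paper asserts within the statement of the Corollary and treats as evident), and your tree argument for the converse direction is the right one.
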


\begin{proof} It follows directly from the Proposition.\end{proof}

In particular, the varieties $D^n_{\ratbuild}(P), D^n_{A_M}(P), D^n_{C_M^0}(P_i)$ are all smooth and irreducible.

\subsection{Some preliminaries before stating the results}\label{SomeNOtts} \mbox{}

Recall that $H_b = \Kerr(v_L \circ \dett : J_b \longrightarrow \mathbb{Z})$ and $J^{SL_2}_b = J_b \cap SL_2(L)$. We introduce the following notation:

\begin{Nott}\label{DefK_m} Let $b \in GL_2(L)$. Let $m \in \{ 0, 1\}$, and let $P_m$ be the vertex of $\build$ represented by 
$\mathfrak{o} \oplus t^m\mathfrak{o}$. Set 
\begin{eqnarray*} K^{SL_2,(m)}_b &=& \Stabb\nolimits_{J_b^{SL_2}} (P_m) \quad \text{ and } \\
 K^{(m)}_b &=& \Stabb\nolimits_{H_b} (P_m), \end{eqnarray*}
where $\Stabb\nolimits_{J_b^{SL_2}} (P_m)$ resp. $\Stabb\nolimits_{H_b} (P_m)$ denotes the stabilizer of the vertex $P_m$ under the action of $J^{SL_2}_b$ 
resp. $H_b$. \end{Nott}

For the rest of the work we also use the following notation: for $m \in \{0,1\}$ we set 
$$g_m = \begin{pmatrix} 1 & 0 \\ 0 & t^m\end{pmatrix}$$ (i.e. $g_0 = 1$). Easy computations give:
\begin{Rem} \mbox{}
\begin{itemize}
\item[(i)] Let $b = 1$. Then for $m \in \{0,1\}$:
\begin{eqnarray*} K^{SL_2,(m)}_1 &=& g_m SL_2(\mathfrak{o}_F) g_m^{-1} \quad \text{ and } \\
 K^{(m)}_1 &=& g_m GL_2(\mathfrak{o}_F) g_m^{-1}. \end{eqnarray*}
\item[(ii)] Let $b = \begin{pmatrix} 1 & 0 \\ 0 & t^{\alpha} \end{pmatrix}$ with $\alpha > 0$. Then for $m \in \{0,1\}$: 

\begin{eqnarray*} K^{SL_2,(m)}_b &=& T^{SL_2}(\mathfrak{o}_F) = \{ \begin{pmatrix} a & 0 \\ 0 & a^{-1}\end{pmatrix} \colon a \in \mathfrak{o}_F^{\times} \} \quad 
\text{ and } \\
 K^{(m)}_b &=& T(\mathfrak{o}_F) = \{ \begin{pmatrix} a & 0 \\ 0 & b \end{pmatrix} \colon a, b \in \mathfrak{o}_F^{\times} \}. \end{eqnarray*}
\end{itemize}
\end{Rem}

For every $b \in GL_2(L)$ and $m \in \{0,1\}$, we have:
\begin{eqnarray*} K^{SL_2,(m)}_b \subseteq J^{SL_2}_b &\subseteq& H_b \subseteq J_b, \quad \text{ and } \\ 
K^{(m)}_b &\subseteq& H_b.\end{eqnarray*}

\begin{Lm}\label{transit} \mbox{}
\begin{itemize}
\item[(i)] Let $b=1$. The group $H_1$ resp. $J^{SL_2}_1$ acts transitively on $\ratbuild^{(m)}$, and the stabilizer of $P_m$ 
is the subgroup $K^{(m)}_1$ resp. $K^{SL_2,(m)}_1$.

\item[(ii)] Let $b=\begin{pmatrix} 1 & 0 \\ 0 & t^{\alpha}\end{pmatrix}$. The group $H_b$ resp. $J^{SL_2}_b$ acts transitively on $A_M^{(m)}$, 
and the stabilizer of $P_m$ is the subgroup $K^{(m)}_b$ resp. $K^{SL_2,(m)}_b$.

\end{itemize}
\end{Lm}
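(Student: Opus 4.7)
My plan is to treat both parts by the same two-step recipe: first establish transitivity via a short lattice-theoretic calculation, and then compute the stabilizer of the base vertex $P_m$ directly, reducing the case $m = 1$ to the case $m = 0$ by conjugation by $g_m$. Throughout I identify a vertex of $\ratbuild$ with an equivalence class of $\mathfrak{o}_F$-lattices in $F^2$, and a vertex of $\maap$ with one of the classes $[\mathfrak{o} \oplus t^i \mathfrak{o}]$ for $i \in \mathbb{Z}$; the type of $[\mathfrak{L}]$ is then the parity of $v_L(\wedge^2 \mathfrak{L})$.

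For (i), given $v, w \in \ratbuild^{(m)}$ I first rescale chosen lattice representatives---which only shifts $v_L$ of $\wedge^2$ by multiples of two---to arrange $v_L(\wedge^2 \mathfrak{L}_v) = v_L(\wedge^2 \mathfrak{L}_w) = m$. Since both lattices are free $\mathfrak{o}_F$-modules of rank two, any $\mathfrak{o}_F$-linear isomorphism $\mathfrak{L}_v \to \mathfrak{L}_w$ extends to an element $g \in GL_2(F)$, and equality of $\wedge^2$-valuations forces $v_L(\dett g) = 0$, giving $g \in H_1$. To promote this to transitivity of $J_1^{SL_2} = SL_2(F)$, I modify $g$ on the right by an element of $\Stabb_{GL_2(F)}(\mathfrak{L}_v) \cong GL_2(\mathfrak{o}_F)$ of appropriate determinant, using surjectivity of $\dett\colon GL_2(\mathfrak{o}_F) \to \mathfrak{o}_F^{\times}$. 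For the stabilizer of $P_0$ I observe that $g \in GL_2(F)$ fixes $P_0$ iff $g\,\mathfrak{o}_F^2 = c\cdot \mathfrak{o}_F^2$ for some $c \in F^{\times}$, hence $\Stabb_{GL_2(F)}(P_0) = F^{\times} \cdot GL_2(\mathfrak{o}_F)$; intersecting with $H_1$ forces the central scalar to be a unit and leaves $GL_2(\mathfrak{o}_F) = K_1^{(0)}$, while further intersection with $SL_2(F)$ gives $SL_2(\mathfrak{o}_F) = K_1^{SL_2,(0)}$. The case $m = 1$ then follows by conjugation by $g_1$, using $g_1 \cdot P_0 = P_1$.

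For (ii) the computations are even more direct because $J_b = T(F)$ is abelian and $\maap$ is parametrized by $\mathbb{Z}$. A direct check shows that $\begin{pmatrix} a & 0 \\ 0 & b \end{pmatrix} \in T(F)$ sends $[\mathfrak{o} \oplus t^i \mathfrak{o}]$ to $[\mathfrak{o} \oplus t^{i + v_L(b) - v_L(a)} \mathfrak{o}]$. Transitivity of $H_b$ on $\maap^{(m)}$ thus reduces, for $i \equiv i' \equiv m \pmod{2}$, to solving the system $v_L(a) + v_L(b) = 0$, $v_L(b) - v_L(a) = i' - i$, which admits an integer solution precisely because $i' - i$ is even. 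For $J_b^{SL_2} = T^{SL_2}(F)$ the constraint $ab = 1$ already forces $v_L(a) + v_L(b) = 0$, and the same parity argument produces the required element. Finally, the stabilizer of $P_m$ in $H_b$ is cut out by $v_L(b) = v_L(a)$ together with $v_L(a) + v_L(b) = 0$, forcing $a, b \in \mathfrak{o}_F^{\times}$, so $K_b^{(m)} = T(\mathfrak{o}_F)$; intersecting with $SL_2$ gives $T^{SL_2}(\mathfrak{o}_F) = K_b^{SL_2,(m)}$.

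I do not anticipate any genuine obstacle; the whole lemma is essentially bookkeeping with lattices and their $\wedge^2$-valuations. The one point deserving care is the passage from $H_1$-transitivity to $SL_2(F)$-transitivity in (i), which hinges on surjectivity of $\dett\colon GL_2(\mathfrak{o}_F) \to \mathfrak{o}_F^{\times}$; beyond that, one need only remember throughout that a vertex is a class of lattices, so scaling by $F^{\times}$ is always available when normalizing $\wedge^2$-valuations.
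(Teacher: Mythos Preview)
Your argument is correct and follows essentially the same lattice-theoretic route as the paper. Two small economies the paper exploits that you might adopt: first, since $K^{(m)}_b$ and $K^{SL_2,(m)}_b$ are \emph{defined} as the stabilizers of $P_m$ in $H_b$ and $J^{SL_2}_b$ (Notation~\ref{DefK_m}), the stabilizer assertion is tautological and your explicit computation is really reproving the Remark that follows that definition; second, because $J^{SL_2}_b \subseteq H_b$, it suffices to prove transitivity for the smaller group $J^{SL_2}_b$, which the paper does directly in case~(i) by choosing an $\mathfrak{o}_F$-basis $\{v_1,v_2\}$ of the target lattice with $\det(v_1\,|\,v_2)=t^m$, thereby landing immediately in $SL_2(F)$ without the detour through $H_1$ and the determinant-correction step.
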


\begin{proof}
That $K^{(m)}_b$ resp. $K^{SL_2,(m)}_b$ is the stabilizer of $P_m$ under these actions, follows directly from Definition \ref{DefK_m}. 
The only thing to show is the transitivity. Since $H_b \supseteq J^{SL_2}_b$, it is enough to show this for $J^{SL_2}_b$.

In the first case ($b=1$) we have $J^{SL_2}_1 = SL_2(F)$. Let $P \in \ratbuild^{(m)}$ be represented by a $\mathfrak{o}_F$-lattice $\mathfrak{L}$ in $F^2$ 
with $\wedge^2\mathfrak{L} = t^{m}\mathfrak{o}_F$ and let $\{v_1,v_2\}$ be an $\mathfrak{o}_F$-basis of it such that the determinant of 
the matrix, having $v_1$, $v_2$ as columns, is $t^m$. If now $\{e_1, e_2\}$ is the standard basis of $F^2$, then $P_m$ is represented by the lattice 
with basis $\{e_1,t^m e_2\}$. Now the matrix $x$ sending $\{e_1,t^m e_2\}$ in $\{v_1,v_2\}$ has determinant $1$. We have $xP_m = P$ and thus $J^{SL_2}_1$ 
acts transitively on $\ratbuild^{(m)}$.
In the second case the proof is a similar straightforward computation. 
\end{proof}

\begin{Nott}\label{Imdef} For $m \in \{0,1\}$ we set: 
\begin{equation}\nonumber I^{(m)} = \Stabb\nolimits_{H_1}(C^m_M). \end{equation}
\end{Nott}

Thus, $$I^{(m)} = g_m (I \cap GL_2(F)) g_m^{-1}$$ (in particular, $I^{(0)} = I \cap GL_2(F)$) or more explicitly:
$$I^{(0)} = \begin{pmatrix} \mathfrak{o}^{\times}_F & \mathfrak{o}_F \\ \mathfrak{p}_F & \mathfrak{o}^{\times}_F \end{pmatrix}, \qquad
I^{(1)} = \begin{pmatrix} \mathfrak{o}^{\times}_F & \mathfrak{p}_F^{-1} \\ \mathfrak{p}_F^2 & \mathfrak{o}^{\times}_F \end{pmatrix}. $$

\subsection{The first case: $X^{SL_2}_w(1)$ and $X_w(1)$}  \mbox{}

We have $J_1^{SL_2} = SL_2(F)$ and $J_1 = GL_2(F)$. 

\begin{Prop}\label{Propb1Var} 
\mbox{}
\begin{itemize}
\item[(i)] We have: $$X^{SL_2}_1(1) \cong \coprod_{J_1^{SL_2} / (I^{SL_2} \cap SL_2(F))}\{pt\} \quad \text{ and } \quad 
X_1(1) \cong \coprod_{J_1 / I^{(0)}} \{pt\}$$
as $k$-varieties, and $J^{SL_2}_1$ resp. $J_1$ acts on the set of these connected components by left multiplication on the index set.

\item[(ii)] Let now $1 \neq w \in W_a$ such that $X_w(1) \neq \emptyset$ and $m \in \{0,1\}$ as in Proposition \ref{propb1}(ii). Then
\begin{eqnarray}\nonumber X_w^{SL_2}(1) &\cong& \coprod_{J_1^{SL_2}/K^{SL_2,(m)}_1} \mathbb{A}_k^{\frac{\ell(w)-1}{2}} \times (\mathbb{P}^1_k - \mathbb{P}^1_k(k)) \quad \text{ and }\\
\nonumber X_w(1) &\cong& \coprod_{J_1/K^{(m)}_1} \mathbb{A}_k^{\frac{\ell(w)-1}{2}} \times (\mathbb{P}^1_k - \mathbb{P}^1_k(k)) 
\end{eqnarray}
\noindent as $k$-varieties, and $J^{SL_2}_1$ resp. $J_1$ acts on the set of these connected components by left multiplication on the index set.
\end{itemize}
\end{Prop}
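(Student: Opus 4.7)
The plan is to bootstrap from three results already in the excerpt: the set-theoretic description of $X^{SL_2}_w(1)(\bar{k})$ in Proposition \ref{propb1}, the scheme structure on each individual locus $D^i_{\ratbuild}(P)$ from Corollary \ref{lmdp}(i), and the transitivity Lemma \ref{transit}. The passage from $X^{SL_2}_w(1)$ to $X_w(1)$ is then purely formal via Proposition \ref{LmGLSL}.

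For part (i), by Proposition \ref{propb1}(i) the set $X^{SL_2}_1(1)(\bar{k})$ consists of every alcove of $\ratbuild$. Every such alcove is $\sigma$-stable and hence corresponds to a $k$-rational point of $X^{SL_2}$; equipping $X^{SL_2}_1(1)$ with the reduced induced sub-Ind-scheme structure therefore gives a disjoint union of $k$-points. The group $SL_2(F)=J^{SL_2}_1$ acts transitively on alcoves of $\ratbuild$ with stabilizer of $C^0_M$ equal to $I^{SL_2}\cap SL_2(F)$, and similarly $GL_2(F)=J_1$ acts transitively with stabilizer $I^{(0)}$. This yields the claimed indexing of the point sets and the action by left multiplication.

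For part (ii), I would first handle the $SL_2$-case. By Proposition \ref{propb1}(ii) we have the set-theoretic disjoint decomposition
\[ X^{SL_2}_w(1)(\bar{k}) \;=\; \coprod_{P \in \ratbuild^{(m)}} D^i_{\ratbuild}(P), \qquad i = \tfrac{\ell(w)+1}{2}, \]
and Corollary \ref{lmdp}(i) already exhibits each $D^i_{\ratbuild}(P)$ as a locally closed, smooth, irreducible reduced subscheme of $X^{SL_2}$ defined over $k$, isomorphic to $\mathbb{A}^{\frac{\ell(w)-1}{2}}_k \times (\mathbb{P}^1_k - \mathbb{P}^1_k(k))$. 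Because the $D^i_{\ratbuild}(P)$ are pairwise disjoint with each piece closed in $X^{SL_2}_w(1)$ (they have the same dimension, and $X^{SL_2}_w(1)$ carries the reduced induced structure), they are exactly the connected components of $X^{SL_2}_w(1)$. Now apply Lemma \ref{transit}(i): $J^{SL_2}_1$ acts transitively on $\ratbuild^{(m)}$ with stabilizer $K^{SL_2,(m)}_1$ at $P_m$, and by Lemma \ref{miini} (applied to each element of $J^{SL_2}_1$ acting as a simplicial automorphism of $\build$) this action carries $D^i_{\ratbuild}(P_m)$ onto $D^i_{\ratbuild}(gP_m)$ for every $g \in J^{SL_2}_1$. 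This identifies $X^{SL_2}_w(1)$ with $\coprod_{J^{SL_2}_1/K^{SL_2,(m)}_1} D^i_{\ratbuild}(P_m)$, giving the asserted isomorphism.

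To pass from $X^{SL_2}_w(1)$ to $X_w(1)$, invoke Proposition \ref{LmGLSL} which yields $X_w(1) \cong \coprod_{J_1/H_1} X^{SL_2}_w(1)$ compatibly with the $J_1$-action. Combining this with the already-established $SL_2$-decomposition collapses the two cosets into one: $H_1$ acts transitively on $\ratbuild^{(m)}$ with stabilizer $K^{(m)}_1$ at $P_m$ by Lemma \ref{transit}(i), and the resulting double index set $(J_1/H_1) \times (H_1/K^{(m)}_1)$ is canonically $J_1/K^{(m)}_1$. The main technical obstacle is checking that the pieces $D^i_{\ratbuild}(P)$ really are connected components of $X^{SL_2}_w(1)$ rather than merely set-theoretically disjoint; this reduces to verifying that none is contained in the closure of another, which follows from the explicit affine-space-times-open-curve structure given in Corollary \ref{lmdp}(i) together with the fact that distinct $P \in \ratbuild^{(m)}$ give disjoint sets of ``vertex-of-departure'' data by Lemma \ref{MinGal}. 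Equivariance of all identifications is then read off from Lemma \ref{miini}.
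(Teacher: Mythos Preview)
Your argument is correct and follows the same route as the paper: Proposition~\ref{propb1} for the set-theoretic decomposition, Corollary~\ref{lmdp}(i) for the scheme structure on each $D^i_{\ratbuild}(P)$, Lemma~\ref{transit}(i) for the transitive action and stabilizers, and Proposition~\ref{LmGLSL} to pass from $X^{SL_2}_w(1)$ to $X_w(1)$. One small imprecision in your treatment of part~(i): you jump directly from ``$J_1$ acts transitively on alcoves of $\ratbuild$'' to the indexing of $X_1(1)$, but $X_1(1)(\bar{k})$ is not the set of alcoves of $\ratbuild$; it is $\coprod_{J_1/H_1} X^{SL_2}_1(1)(\bar{k})$ by Proposition~\ref{LmGLSL}, and the paper makes this intermediate step through $H_1$ explicit before collapsing $(J_1/H_1)\times(H_1/I^{(0)})$ to $J_1/I^{(0)}$---exactly as you do correctly in part~(ii). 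Your extra care about verifying that the $D^i_{\ratbuild}(P)$ are genuine connected components (rather than just a set-theoretic partition) is a point the paper glosses over; it simply cites connectedness from Corollary~\ref{lmdp} and the disjointness from Proposition~\ref{propb1}(ii), which together with the transitive group action permuting the pieces is enough.
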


\begin{proof} We prove (i). From Proposition \ref{propb1}(i) follows that the variety $X^{SL_2}_1(1)$ is a disjoint union of points, 
$J^{SL_2}_1$ acts transitively on the set of these points, and the stabilizer of one of them (we take $C^0_M$) is $I^{SL_2} \cap SL_2(F)$. 
Thus the assertion about $X^{SL_2}_1(1)$ follows. For $X_1(1)$ we have to replace $J^{SL_2}_1$ by $H_1$. The stabilizer of $C^0_M$ inside $H_1$ is 
$I^{(0)}$. Thus $X^{SL_2}_1(1) = \coprod_{H_1/I^{(0)}} \{pt\}$.  The assertion about $X_1(1)$ follows now from Proposition \ref{LmGLSL}:
$$X_1(1) = \coprod_{J_1/H_1} \coprod_{H_1 / I^{(0)}} \{pt\} = \coprod_{J_1/ I^{(0)}}\{pt\},$$ 
\noindent where the $J_1$-action on the set of the connected components is given by left multiplication on the index set.

To prove (ii), let $w \neq 1$ such that $X^{SL_2}_w(1) \neq \emptyset$ (see \ref{propb1}(ii)) and let $m \in \{0,1\}$ be as in Proposition \ref{propb1}(ii). Let $i = \frac{\ell(w)+1}{2}$. 
For every $P \in \ratbuild^{(m)}$ the variety $D_{\ratbuild}^{i}(P)$ is connected (Corollary \ref{lmdp}). $J^{SL_2}_1$ acts on the set of the connected components 
of $X^{SL_2}_w(1) = \coprod_{P \in \ratbuild^{(m)}} D_{\ratbuild}^{i}(P)$ by translating them: thus the action is given by permuting
the vertices of departure. They lie in $\ratbuild^{(m)}$ and $J_1^{SL_2}$ acts transitively on them (Lemma \ref{transit}). 
The vertex of departure of the connected component $D_{\ratbuild}^{i}(P_m)$ of $X^{SL_2}_w(1)$ is $P_m$, and its stabilizer is $K^{SL_2,(m)}_1$. 
The assertion about $X^{SL_2}_w(1)$ follows thus from Proposition \ref{propb1}(ii) and Corollary \ref{lmdp}.

For $H_1$ instead of $J^{SL_2}_1$ we have completely analogously: 
$$X^{SL_2}_w(1) = \coprod_{H_1/K^{(m)}_1} \mathbb{A}_k^{\frac{\ell(w)-1}{2}} \times (\mathbb{P}^1_k - \mathbb{P}^1_k(k)), $$
\noindent and thus from Proposition \ref{LmGLSL} follows
$$X_w(1) = \coprod_{J_1/H_1} X^{SL_2}_w(1) = \coprod_{J_1/H_1} \coprod_{H_1/K^{(m)}_1} \mathbb{A}_k^{\frac{\ell(w)-1}{2}} 
\times (\mathbb{P}^1_k - \mathbb{P}^1_k(k)) = \coprod_{J_1/K^{(m)}_1} \mathbb{A}_k^{\frac{\ell(w)-1}{2}} \times (\mathbb{P}^1_k - \mathbb{P}^1_k(k)),$$

\noindent where $J_1$ acts on the set of the connected components by left multiplication on the index set.
\end{proof}


\subsection{The second case: $X^{SL_2}_w(b)$ and $X_w(b)$ for diagonal $b \neq 1$}  \mbox{}

Let  $b = \begin{pmatrix} 1 & 0 \\ 0 & t^{\alpha} \end{pmatrix}$ with $\alpha > 0$. Then $J_b = T(F)$, $J_b^{SL_2} = T^{SL_2}(F)$. Recall that we have 
$K^{(m)}_b = T(\mathfrak{o}_F)$ and $K^{SL_2,(m)}_b = T^{SL_2}(\mathfrak{o}_F)$.

\begin{Prop}\label{propbdvar} 
\mbox{}
\begin{itemize}
\item[(i)] Let $w \in W_a$ with $\ell(w) = \alpha$. We have: 
$$X^{SL_2}_w(b) \cong \coprod_{J_b^{SL_2} / K^{SL_2,(0)}_b}\{pt\} \quad \text{ and } \quad X_w(b) \cong \coprod_{J_b / K^{(0)}_b} \{pt\}$$
as $k$-varieties, and $J^{SL_2}_b$ resp. $J_b$ acts on the set of these connected components by left multiplication on the index set.

\item[(ii)] Let $w \in W_a$ such that $\ell(w) > \alpha$ and $X_w(1) \neq \emptyset$. Let $m \in \{0,1\}$ as in Proposition \ref{propbd}(iii). Then

\begin{eqnarray}\nonumber X^{SL_2}_w(b) &\cong& \coprod_{J_b^{SL_2}/K^{SL_2,(m)}_b} \mathbb{A}_k^{\frac{\ell(w)-\alpha-1}{2}} \times (\mathbb{P}^1_k - \{0, \infty\}) \text{ and } \\
\nonumber X_w(b) &\cong& \coprod_{J_b/K^{(m)}_b} \mathbb{A}_k^{\frac{\ell(w)-\alpha-1}{2}} \times (\mathbb{P}^1_k - \{0, \infty\}) 
\end{eqnarray}
\noindent as $k$-varieties, and $J^{SL_2}_b$ resp. $J_b$ acts on the set of these connected components by left multiplication on the index set.
\end{itemize}
\end{Prop}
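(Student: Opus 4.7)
The strategy is to mirror the argument used for Proposition \ref{Propb1Var}: combine the set-theoretic description of $X_w^{SL_2}(b)(\bar{k})$ from Proposition \ref{propbd} with the scheme-theoretic output of Corollary \ref{lmdp}, then transport the transitive action on the vertices of departure via Lemma \ref{transit}(ii), and finally upgrade from $SL_2$ to $GL_2$ by Proposition \ref{LmGLSL}. Throughout, the key observation is that the subsets $D_{A_M}^i(P)$ indexing the connected components of $X_w^{SL_2}(b)(\bar{k})$ are already locally closed smooth subschemes of $X^{SL_2}$, defined over $k$, and pairwise disjoint; hence the set-theoretic disjoint union lifts immediately to a scheme-theoretic disjoint union with the reduced sub-Ind-scheme structure.

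For part (i), Proposition \ref{propbd}(i)--(ii) identifies $X_w^{SL_2}(b)(\bar{k})$ with either $\{C_M^{2j}\}_{j\in\mathbb{Z}}$ or $\{C_M^{2j+1}\}_{j\in\mathbb{Z}}$, which is a disjoint union of reduced $k$-points inside the standard apartment. The group $J_b^{SL_2}=T^{SL_2}(F)$ acts on $A_M$ by even shifts and so acts transitively on each of these two sets of alcoves; the stabilizer of $C_M^0$ (or equivalently of $C_M^1$) inside $T^{SL_2}(F)$ is $T^{SL_2}(\mathfrak{o}_F)=K_b^{SL_2,(0)}$, giving the claimed isomorphism for $X_w^{SL_2}(b)$. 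The analogous statement with $H_b$ replacing $J_b^{SL_2}$ and $K_b^{(0)}=T(\mathfrak{o}_F)$ replacing $K_b^{SL_2,(0)}$ is verified identically, and then Proposition \ref{LmGLSL} yields
\[
X_w(b)\cong\coprod_{J_b/H_b}\coprod_{H_b/K_b^{(0)}}\{pt\}\cong\coprod_{J_b/K_b^{(0)}}\{pt\},
\]
with $J_b$ acting by left translation on the index set.

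For part (ii), write $i=\tfrac{\ell(w)-\alpha+1}{2}$, so that Proposition \ref{propbd}(iii) gives the set-theoretic decomposition
\[
X_w^{SL_2}(b)(\bar{k})=\coprod_{P\in A_M^{(m)}}D_{A_M}^i(P).
\]
By Corollary \ref{lmdp}(ii), each piece $D_{A_M}^i(P)$ is the $\bar{k}$-locus of a connected locally closed reduced subscheme of $X^{SL_2}$, defined over $k$ and $k$-isomorphic to $\mathbb{A}_k^{i-1}\times(\mathbb{P}^1_k-\{0,\infty\})=\mathbb{A}_k^{\frac{\ell(w)-\alpha-1}{2}}\times(\mathbb{P}^1_k-\{0,\infty\})$. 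Since distinct vertices of departure give disjoint pieces, and $J_b^{SL_2}$ acts on $X_w^{SL_2}(b)$ permuting these pieces (as translation takes the minimal gallery from $D$ to $A_M$ to the minimal gallery from $gD$ to $A_M$ by Lemma \ref{miini}), the resulting orbit of the component indexed by $P_m$ is computed by Lemma \ref{transit}(ii): $J_b^{SL_2}$ acts transitively on $A_M^{(m)}$ with stabilizer $K_b^{SL_2,(m)}$. The same holds with $H_b$ and $K_b^{(m)}$ in place of $J_b^{SL_2}$ and $K_b^{SL_2,(m)}$. Feeding this into Proposition \ref{LmGLSL} as in the end of the proof of Proposition \ref{Propb1Var} yields the claimed decomposition of $X_w(b)$ with $J_b$ acting by left multiplication on the index set.

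The main obstacle, and essentially the only non-bookkeeping point, is the compatibility between the scheme structure on $X_w^{SL_2}(b)$ (defined as the reduced sub-Ind-scheme on the given $\bar{k}$-points) and the scheme structure produced by glueing the pieces $D_{A_M}^i(P)$; this is resolved because a reduced $k$-subscheme of $X^{SL_2}$ is uniquely determined by its $\bar{k}$-valued points, so the two structures must agree.
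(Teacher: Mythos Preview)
Your proof is correct and follows essentially the same approach as the paper: the paper's own proof explicitly says that (ii) is proved exactly as Proposition \ref{Propb1Var}(ii) with $A_M^{(m)}$ in place of $\ratbuild^{(m)}$ and Proposition \ref{propbd} in place of Proposition \ref{propb1}, and that (i) follows from the transitivity of $J_b^{SL_2}$ on the set of points together with Proposition \ref{LmGLSL}. Your write-up is in fact more detailed than the paper's (you spell out the use of Lemma \ref{transit}(ii), Lemma \ref{miini}, and the uniqueness of reduced subschemes), but the logical skeleton is identical.
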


\begin{proof} 

To prove (i), notice that from Proposition \ref{propbd}, the variety $X^{SL_2}_w(b)$ is a disjoint union of points if $\ell(w) = \alpha$.
If $wC^0_M = C^{\alpha}_M$ resp. $wC^0_M = C^{-\alpha}_M$, the group $J^{SL_2}_1$ acts transitively on the set of these points, $C^0_M$ resp. $C^1_M$ is 
one of them and the stabilizer of $C^0_M$ resp. $C^1_M$ is $K^{SL_2,(0)}_b(F)$. Thus the assertion about $X^{SL_2}_w(b)$ follows. The assertion 
about $X_w(b)$ follows analogously (as in Proposition \ref{Propb1Var} (i)) from Proposition \ref{LmGLSL}, since $K^{(0)}_b$ is the stabilizer of $C^0_M$ resp. $C^1_M$ under the action of $H_b$.

The proof of (ii) is the same as the proof of Proposition \ref{Propb1Var} (ii) (one has to replace $\ratbuild^{(m)}$ by $A_M^{(m)}$ and use Proposition 
\ref{propbd} instead of Proposition \ref{propb1}).
\end{proof}

\begin{Rem}
{ \rm Recall that $J_b/K^{(m)}_b =  T(F) / T(\mathfrak{o}_F) \cong \mathbb{Z}^2$. Thus if $X_w(b)$ is non-empty, then $\pi_0(X_w(b)) = \mathbb{Z}^2$. }
\end{Rem}

\subsection{The third case: $X^{SL_2}_w(b_1)$ and $X_w(b_1)$}  \mbox{}

Let $E$ denote the unramified quadratic extension of $F$ contained in $L$. Then (compare Example (iii) in \ref{ADLVDef}):

\begin{eqnarray*} J_{b_1} &=& \left\{ \begin{pmatrix} a & \sigma(c) \\ tc & \sigma(a) \end{pmatrix} \colon a, c \in E, a\sigma(a) - c\sigma(c)t \in F^{\times}. \right\} 
\end{eqnarray*}  

\noindent We understand the affine space of dimension zero to be a point. Recall from Proposition \ref{propbn} that $X_w(b_1)$ is non-empty if and only 
if $\ell(w)$ is even.

\begin{Prop}\label{PropbnVar} Let $w \in W_a$ such that $X_w(b_1) \neq \emptyset$. Then
$$X_w^{SL_2}(b_1) \cong \mathbb{A}_k^{\frac{\ell(w)}{2}}$$
as $k$-varieties. Further: 

$$X_w(b_1) \cong \coprod_{J_{b_1}/H_{b_1}} \mathbb{A}_k^{\frac{\ell(w)}{2}}$$ 
as $k$-varieties, and $J_{b_1}$ acts on the set of these connected components by left multiplication on the index set.
\end{Prop}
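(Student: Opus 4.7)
The plan is to assemble Proposition \ref{PropbnVar} directly from results already proved in the text; essentially no new computation is needed. The first assertion identifies the scheme structure on $X_w^{SL_2}(b_1)$, and the second assertion is a formal consequence via Proposition \ref{LmGLSL}.

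First I would split into cases according to $\ell(w)$. For $w=1$ (so $\ell(w)=0$), Proposition \ref{propbn}(i) says that $X_1^{SL_2}(b_1)(\bar{k}) = \{C_M^0\}$, which is a single reduced $k$-point, so trivially isomorphic to $\mathbb{A}_k^0$. For $\ell(w) = 2i > 0$, Proposition \ref{propbn}(ii) gives the set-theoretic equality $X_w^{SL_2}(b_1)(\bar{k}) = D^i_{\overline{C_M^0}}(P_m)$ for a specific $m \in \{0,1\}$ determined by $w$. Now by the principle emphasized at the start of Section~4, a reduced locally closed sub-Ind-scheme of $X^{SL_2}$ is determined by its set of $\bar{k}$-valued points; since $D^i_{\overline{C_M^0}}(P_m)$ is exhibited in Corollary \ref{lmdp}(iii) as such a locally closed reduced subscheme of $X^{SL_2}$, its scheme structure coincides with the one $X_w^{SL_2}(b_1)$ carries by definition. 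Then Corollary \ref{lmdp}(iii) supplies the isomorphism
\[
X_w^{SL_2}(b_1) \cong D^i_{\overline{C_M^0}}(P_m) \cong \mathbb{A}_k^{i} = \mathbb{A}_k^{\ell(w)/2},
\]
as required.

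For the second assertion, I would invoke Proposition \ref{LmGLSL} directly: it gives the decomposition
\[
X_w(b_1) \cong \coprod_{J_{b_1}/H_{b_1}} X_w^{SL_2}(b_1),
\]
with $J_{b_1}$ acting by left multiplication on the indexing cosets. Substituting the isomorphism from the first part yields the desired expression. Note that unlike the previous two cases $b=1$ and $b$ diagonal, here we do not further decompose by a stabilizer inside $H_{b_1}$, because $X_w^{SL_2}(b_1)$ is already connected (it is an affine space), so no additional quotient by a vertex stabilizer is needed.

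The only step requiring any thought is the identification of the scheme structures in case $\ell(w) > 0$, but this is essentially automatic given the uniqueness statement for reduced sub-Ind-schemes combined with Corollary \ref{lmdp}(iii); no obstacle of substance remains. In this sense the proposition is really a bookkeeping corollary of Proposition \ref{propbn}, Corollary \ref{lmdp}, and Proposition \ref{LmGLSL}.
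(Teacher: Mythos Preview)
Your proposal is correct and follows essentially the same approach as the paper's own proof, which simply cites Proposition \ref{propbn}, Corollary \ref{lmdp}, and Proposition \ref{LmGLSL}. You have just spelled out the case split and the uniqueness-of-reduced-structure reasoning more explicitly than the paper does.
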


\begin{proof} The claim about $X^{SL_2}_w(b_1)$ follows directly from Proposition \ref{propbn} and Corollary \ref{lmdp}. The claim about $X_w(b_1)$ 
follows now from Proposition \ref{LmGLSL}.
\end{proof}

\begin{Rem}
\mbox{} 
\begin{itemize}
\item[(i)] {\rm Recall that $J_{b_1} / H_{b_1} \cong \mathbb{Z}$. Thus if $X_w(b_1)$ is non-empty, then $\pi_0(X_w(b_1)) = \mathbb{Z}$.}

\item[(ii)] { \rm The next Lemma says that $H_{b_1} = K^{(m)}_{b_1}$. Thus we can write $X_w(b_1)$ more similarly to the two previous cases. 
In fact, let $w \in W_a$ such that $X_w(b_1)$ is non-empty and let $m \in \{0,1\}$ be as in Proposition \ref{propbn}. Then: }

$$X_w(b_1) \cong \coprod_{J_{b_1}/K^{(m)}_{b_1}} \mathbb{A}_k^{\frac{\ell(w)}{2}}.$$ 
\end{itemize}
\end{Rem}

\begin{Lm}\label{JKLm} We have $H_{b_1} = K^{(m)}_{b_1}$.\end{Lm}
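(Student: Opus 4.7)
The inclusion $K^{(m)}_{b_1} \subseteq H_{b_1}$ is immediate from Notation~\ref{DefK_m}. For the reverse direction, my strategy is to show that every element of $H_{b_1}$ fixes both vertices $P_0$ and $P_1$ of the base alcove $C^0_M$; that gives $H_{b_1} \subseteq \Stabb_{H_{b_1}}(P_m) = K^{(m)}_{b_1}$ for both $m$.

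The first step uses Proposition~\ref{propbn}(i), which says $X^{SL_2}_1(b_1)(\bar{k}) = \{C^0_M\}$. By Proposition~\ref{LmGLSL} and the identification $X^{SL_2}_w(b_1) = X_w(b_1)\cap (X^{(0)})_{\redd}$, the subgroup $H_{b_1}$ is precisely the stabilizer in $J_{b_1}$ of the $X^{SL_2}_w(b_1)$-component of $X_w(b_1)$; in particular, $H_{b_1}$ preserves $X^{SL_2}_1(b_1) = \{C^0_M\}$, so every $g \in H_{b_1}$ fixes the alcove $C^0_M$ as a simplex.

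To upgrade this to fixing each of its two vertices individually, I invoke the fact recalled after Lemma~\ref{ReprLemma} that the action of $H \subseteq GL_2(L)$ (the subgroup where $v_L \circ \dett$ vanishes) on the vertices of $\build$ is type-preserving. By its very definition $H_{b_1} \subseteq H$, and the two vertices $P_0, P_1$ of $C^0_M$ have distinct types $0$ and $1$. Hence no element of $H_{b_1}$ can swap them, and each $g \in H_{b_1}$ must fix both $P_0$ and $P_1$. This yields $H_{b_1} \subseteq K^{(m)}_{b_1}$ for $m \in \{0,1\}$ and completes the argument.

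The only real obstacle is the identification in Proposition~\ref{LmGLSL} of $H_{b_1}$ as the stabilizer of the correct component, which is already packaged there. As a sanity check one can also argue directly from the explicit description of $J_{b_1}$ in Example~(iii) of Subsection~\ref{ADLVDef}: the condition $v_L(a\sigma(a) - tc\sigma(c)) = 0$ forces $a \in \mathfrak{o}_E^\times$ and $c \in \mathfrak{o}_E$ (the two summands have opposite parity in $v_L$), and a short matrix calculation then shows that such an element lies both in $GL_2(\mathfrak{o})$ and in $g_1 GL_2(\mathfrak{o}) g_1^{-1}$, so it fixes both $P_0$ and $P_1$.
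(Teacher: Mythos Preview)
Your proposal is correct. Your main argument is genuinely different from the paper's: you exploit Proposition~\ref{propbn}(i), namely $X^{SL_2}_1(b_1)(\bar{k})=\{C^0_M\}$, together with the fact that $H_{b_1}$ acts on this set and that $H$ acts type-preservingly on vertices, to conclude that $H_{b_1}$ fixes $P_0$ and $P_1$ without any matrix computation. The paper instead works directly with the explicit description of $J_{b_1}$ from Example~(iii) in Subsection~\ref{ADLVDef}: from $v_L(a\sigma(a)-tc\sigma(c))=0$ and the parity mismatch of the two summands it deduces $a\in\mathfrak{o}^\times$, $c\in\mathfrak{o}$, hence $x\in I$. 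Your ``sanity check'' is essentially this same computation. The geometric argument has the advantage of being conceptual and independent of the explicit form of $J_{b_1}$; the paper's computation is self-contained and does not rely on the determination of $X^{SL_2}_1(b_1)$ carried out earlier.
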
 
   
\begin{proof} 
By Definition we have $K^{(m)}_{b_1} \subseteq H_{b_1}$, hence it is enough to prove that $H_{b_1} \subseteq K^{(m)}_{b_1}$. 
This is equivalent to saying that $H_{b_1}$ stabilizes $P_0$ and $P_1$. Thus we have to prove that $H_{b_1} \subseteq I$.
Let $E$ be the unramified quadratic extension of $F$, contained in $L$, and assume 
$x = \begin{pmatrix}a & \sigma(c) \\ tc & \sigma(a) \end{pmatrix} \in H_{b_1}$ with $a,c \in E$ (compare Example (iii) in \ref{ADLVDef}).

Since $x \in H_{b_1}$, we have $0= v_L(\dett(x)) = v_L(a\sigma(a) - tc\sigma(c))$. But $v_L(a\sigma(a))$ is even 
and $v_L(tc\sigma(c))$ is odd. Thus we have $0 = \minn\{v_L(a\sigma(a)), v_L(tc\sigma(c))\}$. From this follows $a \in \mathfrak{o}^{\times}$ and 
$c \in \mathfrak{o}$. Hence $tc \in \mathfrak{p}$ and $\sigma(c) \in \mathfrak{o}$. From all these follows $x \in I$.  
\end{proof}


\section{The cohomology of $X_w(b)$}

Extend the scalars to $\bar{k}$. Let 
\[ \overline{X}_w(b) = X_w(b) \times_{\Specc k} \Specc \bar{k}.\]
The group $J_b$ acts on $X_w(b)$ and thus on $\overline{X}_w(b)$ and this action induces an action on the cohomology groups with 
compact support $H^{\ast}_c(\overline{X}_w(b), \cool)$ where $l \neq p$ is a prime. In this section we will compute 
the cohomology groups of $\overline{X}_w(b)$ and the induced representations of $J_b$ on them. All representations we consider are $\cool$-vector spaces. 
The group $J_b$ is in every case locally profinite and the representations we get are smooth (in the sense of \cite{BH} 2.1).

Beside $J_b$, also the Galois group $$\Gamma = \Gall$$ of the extension $\bar{k}/k$ acts on the cohomology $H_c^{\ast}(\overline{X}_w(b), \cool)$. 
If $X$ is a representation of a group $G$ and $n \in \mathbb{Z}$, then we write $X(n)$ for the twisted representation of $G \times \Gamma$, which is 
isomorphic to $X$ as $G$-representation, and on which the topological generator $\sigma$ of $\Gamma$ acts by multiplication with $q^n$. 
By abuse of notation we write sometimes $X$ instead of $X(0)$.

If $G$ is a locally profinite group, then the (compact) induction of smooth representations from an open subgroup 
$K$ of $G$ (see sect. \ref{AHelpfulResult} below or \cite{BH} 2.4-5 for Definitions) commutes by construction with this twisting: 
if $X$ is a smooth representation of $K$, then 
$$\cIndd\nolimits_K^G X(n) = (\cIndd\nolimits_K^G X)(n) \quad \text{ and } \quad \Indd\nolimits_K^G X(n) = (\Indd\nolimits_K^G X)(n).$$
\noindent We will identify these terms, and only use the left side.

To simplify notation we write in the future $\mathbb{P}^n$ and $\mathbb{A}^n$ for $\mathbb{P}^n_{\bar{k}}$ and $\mathbb{A}^n_{\bar{k}}$. Further, 
we will write $\mathbb{P}^1 - \mathbb{P}^1(k)$ for the scheme $(\mathbb{P}^1_k - \mathbb{P}^1_k(k)) \times_{\Specc k} \Specc \bar{k}$, which is 
the projective line over $\bar{k}$ with the $q+1$ points defined already over $k$ deleted.

\subsection{Three Lemmas}\label{AHelpfulResult}  \mbox{}

Here we will prove three easy facts which we use later on.

\begin{Lm}\label{reptriv} 
Let $X$ be a connected smooth projective curve over $\bar{k}$, and let $G$ be a group acting on $X$. Then the induced action of $G$ on $H^r_c(X, \cool)$ for $r = 0,2$ 
is trivial.
\end{Lm}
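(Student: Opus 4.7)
The plan is as follows. Since $X$ is projective, compactly supported cohomology coincides with ordinary $\ell$-adic étale cohomology, so $H^r_c(X, \cool) = H^r(X, \cool)$ for all $r$, and the two relevant groups are one-dimensional: $H^0(X, \cool) \cong \cool$ because $X$ is connected, and $H^2(X, \cool) \cong \cool(-1)$ by Poincaré duality (or directly via the trace map) for a smooth projective connected curve. So the task reduces to showing that each $g \in G$ acts as the identity on these one-dimensional spaces. Since $G$ acts on $X$ by $\bar{k}$-automorphisms, for each $g \in G$ the map $g \colon X \to X$ is a finite morphism of degree $1$.

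For $r=0$, the action is the standard one: $H^0(X, \cool)$ is spanned by the constant class $1$, and $g^\ast$ is a ring endomorphism fixing $1$, hence is the identity. This disposes of the easy half.

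For $r=2$, I would invoke the general fact from étale cohomology that for any finite surjective morphism $f \colon X \to X$ of a smooth projective connected variety of dimension $d$, the pullback $f^\ast$ on $H^{2d}(X, \cool(d)) \cong \cool$ is multiplication by $\deg(f)$ (this is essentially the compatibility of $f^\ast f_\ast$ with the trace, and it can be read off from the projection formula together with the fact that $f_\ast 1 = \deg(f) \cdot 1$). Applied to each $g \in G$, which has degree $1$ as an automorphism, this shows $g^\ast$ is the identity on $H^2(X, \cool)$. The Tate twist is irrelevant for the triviality of the $G$-action, though it is of course essential for the $\Gamma$-action discussed elsewhere.

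The only step that is not entirely formal is the degree statement for $H^2$; everything else is immediate from connectedness and properness. I do not anticipate any real obstacle, since this is a standard fact one can quote from any reference on étale cohomology (e.g.\ the Poincaré duality / trace formalism of SGA 4). The lemma is essentially a sanity-check to be used later when computing the cohomology of the connected components of $\overline{X}_w(b)$, in particular of factors like $\mathbb{P}^1 - \mathbb{P}^1(k)$ whose compactification $\mathbb{P}^1$ falls under this lemma.
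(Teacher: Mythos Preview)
Your proof is correct and follows essentially the same approach as the paper: for $r=0$ you use that $G$ acts trivially on the constant sheaf over a connected scheme, and for $r=2$ your degree argument (that $g^\ast$ on top cohomology is multiplication by $\deg g = 1$) is exactly the content of the Poincar\'e duality/trace formalism the paper invokes in one word. The paper's proof is simply the two-line version of what you have written out in detail.
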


\begin{proof} 
This follows from the fact, that the induced action of $G$ on the constant sheaf is trivial and from Poincar\'e duality. 
\end{proof}

Now we will prove two group-theoretic Lemmata. If $G \rightarrow H$ is a homomorphism, then $\inff_H^G$ denotes 
the inflation of representations from $H$ to $G$. If $\pi$ is a representation of a group $G$, and $K \subseteq G$ is a subgroup, 
then we also write $\pi$ for the restriction of $\pi$ to $K$ if it causes no ambiguity. 

The induced representation $\Indd_K^G \pi$ of a smooth representation $(\pi, V)$ of a closed subgroup $K$ of a locally profinite group $G$ is 
the set of functions $f \colon G \rightarrow V$, satisfying the conditions
\begin{itemize}
\item[(1)] $f(kg) = \pi(k)f(g)$ for all $g \in G$; 

\item[(2)] there exists a compact open subgroup $P$ of $G$ such that $f(gp) = f(g)$ for all $g \in G, p \in P$,
\end{itemize}

\noindent and on which $G$ operates by $(gf)(x) := f(xg)$. The compactly induced representation is the subspace $\cIndd_K^G \pi$ of $\Indd_K^G \pi$ which 
consists of functions $f \colon G \rightarrow V$ with the additional property
\begin{itemize}
\item[(3)] the image of the support of $f$ in the set of right cosets of $G$ modulo $K$ is compact.
\end{itemize}

\begin{Lm}\label{infINd} Let $\alpha: G \rightarrow K$ be a continuous, surjective, open homomorphism of locally profinite groups, $N$ an open subgroup 
of $K$ and $R = \alpha^{-1}(N)$. Let $(\pi, V)$ be a smooth representation of $N$. Then we have an isomorphism of smooth $G$-representations:
$$\inff\nolimits_K^G \Indd\nolimits_N^K \pi \cong \Indd\nolimits_R^G \inff\nolimits_N^R \pi.$$
\end{Lm}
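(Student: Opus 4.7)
The plan is to define an explicit $G$-equivariant map
\[
\Phi \colon \inff\nolimits_K^G \Indd\nolimits_N^K \pi \longrightarrow \Indd\nolimits_R^G \inff\nolimits_N^R \pi, \qquad \Phi(f) := f \circ \alpha,
\]
and to verify it is an isomorphism. That $\Phi(f)$ lies in the target is straightforward: for $r \in R$ and $g \in G$ we have $\alpha(r) \in N$, so $\Phi(f)(rg) = f(\alpha(r)\alpha(g)) = \pi(\alpha(r))f(\alpha(g)) = \pi(\alpha(r))\Phi(f)(g)$. For smoothness, I would pick a compact open subgroup $P' \subseteq K$ under which $f$ is right-invariant; since $G$ is locally profinite and $\alpha^{-1}(P')$ is open in $G$, there is a compact open subgroup $P \subseteq \alpha^{-1}(P')$, and $\Phi(f)$ is right-invariant under $P$. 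The $G$-equivariance $\Phi(g \cdot f) = g \cdot \Phi(f)$ (where $G$ acts on the left via $\alpha$ and on the right by right translation) is immediate from $\alpha(xg) = \alpha(x)\alpha(g)$.

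Injectivity of $\Phi$ is clear because $\alpha$ is surjective. The main obstacle is surjectivity: given $h \in \Indd_R^G \inff_N^R \pi$, I must construct $f$ with $h = f \circ \alpha$. Writing $M := \Kerr(\alpha)$, the inclusion $M \subseteq R$ together with $\alpha(M) = \{1\}$ and the equivariance of $h$ yield
\[
h(mg) = \pi(\alpha(m))h(g) = h(g) \quad \text{for all } m \in M, g \in G,
\]
so $h$ is left-invariant under $M$. The crucial point is that $M$ is normal in $G$: for any $m \in M$ and $g \in G$ one can write $gm = (gmg^{-1})g$ with $gmg^{-1} \in M$, which upgrades the above to right-invariance $h(gm) = h(g)$. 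Hence $h$ descends to a function on the quotient $G/M$, which the open mapping property of $\alpha$ identifies topologically with $K$; call the resulting function $f \colon K \to V$, so that $h = f \circ \alpha$ by construction.

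It remains to verify $f \in \Indd_N^K \pi$. The equivariance $f(nk) = \pi(n) f(k)$ follows by lifting $n \in N$ and $k \in K$ to elements $r \in R$ and $g \in G$ with $\alpha(r) = n$ and $\alpha(g) = k$, and invoking the corresponding property of $h$. For smoothness, choose a compact open subgroup $P \subseteq G$ under which $h$ is right-invariant; then $\alpha(P) \subseteq K$ is a subgroup which is compact (continuous image of compact) and open (since $\alpha$ is open), and $f$ is right-invariant under $\alpha(P)$ because $f(k\alpha(p)) = h(gp) = h(g) = f(k)$ for $\alpha(g) = k$, $p \in P$. This produces a two-sided inverse to $\Phi$, completing the proof.
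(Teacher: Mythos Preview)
Your proof is correct and follows essentially the same approach as the paper: both define the forward map $f \mapsto f \circ \alpha$ and its inverse by choosing a preimage under $\alpha$. You have supplied more detail than the paper (which simply declares the verifications ``straightforward''), in particular the explicit use of normality of $\Kerr(\alpha)$ to pass from left to right invariance and the use of the openness of $\alpha$ to produce a compact open subgroup for smoothness of the descended function; these are exactly the points the paper leaves implicit.
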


\begin{proof} Induction from open subgroups and  inflation respect smoothness. Consider the two maps:
\begin{eqnarray}\nonumber \inff\nolimits_K^G \Indd\nolimits_N^K \pi &\rightarrow& \Indd\nolimits_R^G \inff\nolimits_N^R \pi, \quad f \mapsto (\tilde{f}: G \rightarrow V), \tilde{f}(g) = f(\alpha (g))  \quad \text{ and } \\
\nonumber \Indd\nolimits_R^G \inff\nolimits_N^R \pi &\rightarrow& \inff\nolimits_K^G \Indd\nolimits_N^K \pi, \quad \tilde{f} \mapsto (f: K \rightarrow V), f(k) = \tilde{f}(g), \text{ for some } g \in \alpha^{-1}(k).
\end{eqnarray}

\noindent It is straightforward to show now that these maps are well-defined (i.e., the functions in the image satisfy the conditions (1) and (2) 
and the second map is independent of the choice of the preimage of $k$), inverse to each other and both $G$-equivariant.
\end{proof}

\begin{Lm}\label{cIndIso} 
Let $H$ be a locally profinite group, and $K$, $N$ two open subgroups such that $H = K \cdot N$ where $K \cdot N = \{kn \colon k \in K, n \in N\}$. 
Let $(\pi,V)$ be a smooth $K$-representation. Then there is the following isomorphism of smooth $N$-representations:
$$\Indd\nolimits_K^H \pi \cong \Indd\nolimits^N_{K \cap N} \pi.$$
\end{Lm}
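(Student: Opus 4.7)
The plan is to exhibit the isomorphism concretely by restricting functions from $H$ to $N$, and to construct the inverse by using the factorisation $H = K \cdot N$ to extend a function on $N$ uniquely to a function on $H$ transforming correctly under the left $K$-action.

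First I would define the candidate map
\[ \Phi \colon \Indd\nolimits_K^H \pi \longrightarrow \Indd\nolimits_{K\cap N}^N \pi, \qquad f \longmapsto f|_N. \]
Checking that $\Phi(f)$ satisfies the two defining conditions of $\Indd_{K\cap N}^N \pi$ is routine: for $k' \in K \cap N$ and $n \in N$ we have $f(k'n) = \pi(k')f(n)$ because $k' \in K$, and any compact open $P \subseteq H$ fixing $f$ on the right yields the compact open $P \cap N \subseteq N$ fixing $f|_N$ on the right. The $N$-equivariance is immediate from the definition of the right-translation action: $(\Phi(nf))(x) = f(xn) = (\Phi(f))(xn) = (n \Phi(f))(x)$ for $n \in N$, $x \in N$.

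Next I would establish injectivity of $\Phi$ and construct the inverse, using the hypothesis $H = K \cdot N$ in both places. For injectivity: if $f|_N = 0$, then for any $h \in H$ write $h = kn$ with $k \in K$, $n \in N$, so $f(h) = f(kn) = \pi(k)f(n) = 0$. For surjectivity, given $g \in \Indd_{K\cap N}^N \pi$ I would define $\tilde g \colon H \to V$ by
\[ \tilde g(kn) := \pi(k) g(n), \qquad k \in K, \, n \in N. \]
The main obstacle (and the only substantive point) is well-definedness: if $k_1 n_1 = k_2 n_2$ with $k_i \in K$, $n_i \in N$, then $k_2^{-1} k_1 = n_2 n_1^{-1} \in K \cap N$, and
\[ \pi(k_2) g(n_2) = \pi(k_2) g((k_2^{-1}k_1) n_1) = \pi(k_2) \pi(k_2^{-1} k_1) g(n_1) = \pi(k_1) g(n_1). \]

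Finally I would verify that $\tilde g \in \Indd_K^H \pi$: the transformation law $\tilde g(k_0 h) = \pi(k_0) \tilde g(h)$ for $k_0 \in K$ follows by writing $h = kn$, and the smoothness condition follows from smoothness of $g$ together with the fact that the open subgroup stabilising $g$ on the right, intersected with any compact open subgroup of $H$, still stabilises $\tilde g$ on the right (one uses that right translation by $N$ preserves the decomposition $H = K \cdot N$). The identities $\Phi \circ (g \mapsto \tilde g) = \mathrm{id}$ and $(f \mapsto \widetilde{f|_N}) = f$ are then clear (the latter because $\widetilde{f|_N}(kn) = \pi(k) f(n) = f(kn)$), so $\Phi$ is an $N$-equivariant isomorphism.
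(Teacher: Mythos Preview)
Your proof is correct and follows exactly the same approach as the paper: restriction $f \mapsto f|_N$ in one direction and extension $g \mapsto \tilde g$ with $\tilde g(kn) = \pi(k)g(n)$ in the other, with the well-definedness check using $k_2^{-1}k_1 = n_2 n_1^{-1} \in K \cap N$. The paper's proof merely states these two maps and declares the verifications straightforward, so you have in fact supplied the details the paper omits; one small remark is that for the smoothness of $\tilde g$ you do not need to intersect with a compact open of $H$, since the compact open $Q \subseteq N$ fixing $g$ is already compact open in $H$ (as $N$ is open) and directly fixes $\tilde g$ via $\tilde g(knq) = \pi(k)g(nq) = \pi(k)g(n) = \tilde g(kn)$.
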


\begin{proof} Induction from an open subgroup respects smoothness. Consider the two maps:
\begin{eqnarray}\nonumber \Indd\nolimits_K^H \pi &\rightarrow& \Indd\nolimits_{K \cap N}^N \pi, \quad \tilde{f} \mapsto \tilde{f}|_N, \quad \text{ and } \\
\nonumber \Indd\nolimits_{K \cap N}^N \pi &\rightarrow& \Indd\nolimits_K^H \pi, \quad f \mapsto \tilde{f}, \text{ where } \tilde{f}(kn) = \pi(k)f(n), 
\text{ for all } k \in K, n \in N. 
\end{eqnarray}
\noindent It is straightforward to show now that these maps are well-defined (i.e., the functions in the image satisfy the conditions (1) and (2), 
and the second map is independent of the choice of $k$ and $n$), inverse to each other and both $N$-equivariant.
\end{proof}


\subsection{The first case: cohomology of $\overline{X}_w(1)$}  \mbox{}

Let $b=1$. We will use the following notation: $$G := GL_2(F) = J_1.$$
Recall that for $m \in \{0,1\}$, we denoted by $P_m$ the vertex in $\build$, represented by the lattice chain $\mathfrak{o} \oplus t^m\mathfrak{o}$, 
by $g_m$ the matrix $\begin{pmatrix} 1 & 0 \\ 0 & t^m \end{pmatrix},$ and finally we had the stabilizer of $P_m$ under the action of $H_1$: 
$$K^{(m)}_1 = g_m GL_2(\mathfrak{o}_F) g_m^{-1}.$$
 
For any group $N$, let $1_{\cool}$ denote the trivial representation of $N$ on the one-dimensional $\cool$-vector space. 
Let $\overline{B}$ denote the subgroup of upper triangular matrices of $GL_2(k)$. Then the Steinberg representation $\Stt_{GL_2(k)}$ of $GL_2(k)$ is 
defined by the following exact sequence: 

\begin{equation}\nonumber \xymatrix{0 \ar[r] & 1_{\cool} \ar[r] & \Indd\nolimits_{\overline{B}}^{GL_2(k)} 1_{\cool} \ar[r] & \Stt_{GL_2(k)} \ar[r] & 0}
\end{equation}

\noindent where the image of the first map is the set of the constant functions. This sequence is split and we 
have $$1_{\cool} \oplus \Stt\nolimits_{GL_2(k)} \cong \Indd\nolimits_{\overline{B}}^{GL_2(k)} 1_{\cool}.$$ The representation $\Stt_{GL_2(k)}$ is irreducible. 
For all these facts about $\Stt_{GL_2(k)}$, we refer to  \cite{BH} 6.3.

For $m \in \{0,1\}$ the map 
$$\pi_m: K^{(m)}_1 \longrightarrow GL_2(k), x \mapsto g_m^{-1} x g_m \modd \left( 1 + \begin{pmatrix} \mathfrak{p}_F &
 \mathfrak{p}_F \\ \mathfrak{p}_F & \mathfrak{p}_F \end{pmatrix} \right) $$
\noindent is continuous, surjective (for $m=0$ it is just the projection on $GL_2(k)$ induced from the projection $\mathfrak{o}_F \twoheadrightarrow
 \mathfrak{o}_F/\mathfrak{p}_F = k$ and for $m=1$ first conjugation by $g_1^{-1}$ and then this projection) and open ($GL_2(k)$ is finite). 
Recall that $I^{(m)} = \Stabb_{H_1} (C^m_M)$ (Notation \ref{Imdef}). We have $I^{(m)} = \pi_m^{-1}(\overline{B})$.  For $m \in \{0,1\}$ set: 

$$ \overline{\Stt} := \inff\nolimits_{GL_2(k)}^{K^{(m)}_1} \Stt\nolimits_{GL_2(k)},$$

\noindent where we inflate with respect to $\pi_m$. Since $\Stt_{GL_2(k)}$ is irreducible, $\overline{\Stt}$ is as well. As an inflation 
of a representation from a finite group, $\overline{\Stt}$ clearly is smooth. From Lemma \ref{infINd}, we have the exact sequence for $m \in \{0,1\}$:
\begin{equation}\label{eqnIndVtilde} \xymatrix{0 \ar[r] & 1_{\cool} \ar[r] & \Indd\nolimits_{I^{(m)}}^{K^{(m)}_1} 1_{\cool}
 \ar[r] & \overline{\Stt} \ar[r] & 0}, \end{equation}

\noindent where the image of the map on the left side is the set of the constant functions. Notice that the index of $I^{(m)}$ in $K^{(m)}_1$ is 
$q+1$, thus $\overline{\Stt}$ is $q$-dimensional, and that $1_{\cool} \oplus \overline{\Stt} = \Indd\nolimits_{I^{(m)}}^{K^{(m)}_1} 1_{\cool}$.


\begin{Prop}\label{hehe} Let $w \in W_a$ such that $X_w(1) \neq \emptyset$ (Proposition \ref{propb1}) and if $w\neq 1$ let $m$ be as in Proposition \ref{propb1}. 
There are the following isomorphisms of $G \times \Gamma$-modules:
\begin{itemize}
\item[(i)] if $w=1$, $$H^r_c(\overline{X}_1(1), \cool) \cong \begin{cases} \cIndd^G_{I^{(0)}} 1_{\cool}& \text{ if } r = 0, \\ 
0 & \text{ else.  } \end{cases}$$

\item[(ii)] If $w \neq 1$, 
$$H^r_c(\overline{X}_w(1), \cool) \cong 
\begin{cases} \cIndd\nolimits^G_{K^{(m)}_1} \overline{\Stt}(\frac{\ell(w)-1}{2}) & \text{ if } r = \ell(w), \\ 
\cIndd\nolimits^G_{K^{(m)}_1} 1_{\cool}(\frac{\ell(w)-3}{2}) & \text{ if } r = \ell(w) + 1,\\ 
0 & \text{ else.}
\end{cases}$$
\end{itemize}
\end{Prop}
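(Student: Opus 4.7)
The case $w = 1$ is immediate from Proposition \ref{Propb1Var}(i): $\overline{X}_1(1)$ is a disjoint union of $\bar k$-points indexed by $G/I^{(0)}$ on which $G$ acts by translation, so $H^0_c(\overline{X}_1(1),\cool)$ is tautologically the space of finitely supported $\cool$-valued functions on $G/I^{(0)}$, which is $\cIndd^G_{I^{(0)}} 1_{\cool}$, and all higher compactly supported cohomology vanishes. No Tate twist arises in degree zero.

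For $w \neq 1$ the plan is to combine the $G$-equivariant decomposition
\[ \overline{X}_w(1) \;\cong\; \coprod_{G/K^{(m)}_1} \overline{S}, \qquad \overline{S} = \mathbb{A}^n \times \bigl(\mathbb{P}^1 - \mathbb{P}^1(k)\bigr),\quad n := \tfrac{\ell(w)-1}{2}, \]
from Proposition \ref{Propb1Var}(ii) with the fact that $l$-adic cohomology with compact support converts disjoint unions into direct sums. Since $K^{(m)}_1$ stabilizes a reference component $\overline{S}$, and a $G$-module of the form $\bigoplus_{G/K} V$ with $K$ open and acting on $V$ is by construction $\cIndd^G_K V$, the whole problem reduces to computing $H^{\ast}_c(\overline{S})$ as a $K^{(m)}_1 \times \Gamma$-module.

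This reference-component cohomology I would compute by K\"unneth. The factor $H^{\ast}_c(\mathbb{A}^n_{\bar k})$ is concentrated in degree $2n$, one-dimensional with a Tate twist. For $\overline{\Omega^2} := \mathbb{P}^1 - \mathbb{P}^1(k)$, the excision sequence for the decomposition $\mathbb{P}^1 = \overline{\Omega^2} \sqcup \mathbb{P}^1(k)$ gives $H^0_c(\overline{\Omega^2}) = 0$; a middle cohomology $H^1_c(\overline{\Omega^2})$ isomorphic to the cokernel of the diagonal $\cool \hookrightarrow \cool^{\mathbb{P}^1(k)}$, which under the $GL_2(k)$-equivariant identification $\mathbb{P}^1(k) \cong GL_2(k)/\overline{B}$ and the split sequence $\Indd^{GL_2(k)}_{\overline{B}} 1_{\cool} \cong 1_{\cool} \oplus \Stt_{GL_2(k)}$ becomes $\Stt_{GL_2(k)}$ with trivial Galois action; and $H^2_c(\overline{\Omega^2}) \cong H^2_c(\mathbb{P}^1_{\bar k})$, one-dimensional with a Tate twist and carrying trivial $GL_2(k)$-action by Lemma \ref{reptriv}. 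Thus $H^r_c(\overline{S})$ vanishes outside $r = 2n+1 = \ell(w)$ and $r = 2n+2 = \ell(w)+1$, and it remains to match the Tate twists.

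The crucial and most delicate step is to verify that the $K^{(m)}_1$-action on $\overline{S}$ factors through the surjection $\pi_m: K^{(m)}_1 \twoheadrightarrow GL_2(k)$ and that the induced $GL_2(k)$-action on $\overline{\Omega^2}$ is the standard one by fractional linear transformations on $\mathbb{P}^1$. Unwinding the identification $D^{n+1}_{\ratbuild}(P_m) \cong \mathbb{A}^n_k \times (\mathbb{P}^1_k - \mathbb{P}^1_k(k))$ of Corollary \ref{lmdp}, the second factor parameterizes the first alcove of the minimal gallery leaving $\ratbuild$ at $P_m$, so $K^{(m)}_1$ permutes such first alcoves through its action on the residue $\mathbb{P}^1$ at $P_m$, which by construction of $\pi_m$ is the natural $GL_2(k)$-action. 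On the affine factor, the pro-$p$ congruence kernel of $\pi_m$ acts by affine transformations whose linear part is congruent to the identity mod $\mathfrak{p}_F$, so the induced action on the one-dimensional $H^{2n}_c(\mathbb{A}^n_{\bar k})$ is trivial. Once this factorization is established, K\"unneth yields $H^{\ell(w)}_c(\overline{S}) \cong \overline{\Stt}(\frac{\ell(w)-1}{2})$ and $H^{\ell(w)+1}_c(\overline{S}) \cong 1_{\cool}(\frac{\ell(w)-3}{2})$ as $K^{(m)}_1 \times \Gamma$-modules, and applying $\cIndd^G_{K^{(m)}_1}$ together with its compatibility with Tate twists, as recorded at the start of Section 5, produces the claimed $G \times \Gamma$-module isomorphisms. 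The main obstacle is precisely this last factorization and triviality argument for the congruence kernel, which requires honest unpacking of the Section 4 construction and the pro-$p$ rigidity of $\ker(\pi_m)$ on $l$-adic cohomology.
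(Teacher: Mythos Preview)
Your proof is correct and follows the same route as the paper: reduce via Proposition~\ref{Propb1Var} and compact induction to the cohomology of one component $\overline{S}=\mathbb{A}^n\times(\mathbb{P}^1-\mathbb{P}^1(k))$, then combine K\"unneth with the excision sequence for $\mathbb{P}^1=(\mathbb{P}^1-\mathbb{P}^1(k))\cup\mathbb{P}^1(k)$. The step you single out as the main obstacle is handled more cheaply in the paper: one does not need the full $K^{(m)}_1$-action on $\overline{S}$ to factor through $\pi_m$, nor any pro-$p$ rigidity. It suffices that (a) the one-dimensional top cohomology $H^{2n}_c(\mathbb{A}^n)$ carries the trivial action under any automorphism, so (\ref{compdual}) is automatically $K^{(m)}_1$-equivariant, and (b) $K^{(m)}_1$ acts transitively on $\mathbb{P}^1(k)$ (the alcoves of $\ratbuild$ containing $P_m$) with stabilizer $I^{(m)}$, whence $H^0_c(\mathbb{P}^1(k))\cong\Indd_{I^{(m)}}^{K^{(m)}_1}1_{\cool}$ and the cokernel of the diagonal is $\overline{\Stt}$ by the already-established sequence~(\ref{eqnIndVtilde}).
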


\begin{proof} Here the cohomology with compact support of a disjoint union of schemes of finite type over $k$ is defined as the direct sum of the cohomology 
with compact support of these schemes. With this definition, the cohomology with compact support commutes with colimits. 
For $w = 1$, we get from Proposition \ref{Propb1Var}:

\begin{equation}\nonumber H^r_c(\overline{X}_1(1), \cool) = H^r_c(\coprod_{G/I^{(0)}} \{ pt \}, \cool) = \cIndd\nolimits^G_{I^{(0)}} (H^r_c(\{ pt \}, \cool)).
\end{equation}

\noindent The cohomology of a point in positive degrees vanishes and for $r = 0$ we have: $H^0_c(\{ pt \}, \cool) = 1_{\cool}$. 
The action of $\Gamma$ on the zero cohomology group with coefficients in a constant sheaf is trivial. Hence (i) follows.

Assume now $w \neq 1$ such that $X_w(1) \neq \emptyset$ and $m \in \{0,1\}$ as in Proposition \ref{propb1}. Then from Proposition \ref{Propb1Var} follows:

\begin{eqnarray*} H^r_c(\overline{X}_w(1), \cool) &=& H^r_c(\coprod_{G/K^{(m)}_1} \mathbb{A}^{\frac{\ell(w)-1}{2}} \times 
(\mathbb{P}^1 - \mathbb{P}^1(k)), \cool) \\
&=& \cIndd\nolimits^G_{K^{(m)}_1} (H^r_c(\mathbb{A}^{\frac{\ell(w)-1}{2}} \times (\mathbb{P}^1 - \mathbb{P}^1(k)), \cool)).
\end{eqnarray*}

\noindent The group $K^{(m)}_1$ acts trivially on the first factor of the K\"{u}nneth-formula of the right hand side, thus the $K^{(m)}_1$-actions 
on both sides in the following equation (applied to $Y = \mathbb{P}^1 - \mathbb{P}^1(k)$) coinside. For any scheme $Y$ over $\bar{k}$, we have: 
\begin{equation}\label{compdual} H^r_c(\mathbb{A}^{n} \times Y, \cool) = H^{r-2n}_c(Y, \cool(n)).\end{equation}

\noindent From this follows
\begin{equation}\label{CindSL} H^r_c(\overline{X}_w(1), \cool) = \cIndd\nolimits_{K^{(m)}_1}^G (H^{r-\ell(w) + 1}_c(\mathbb{P}^1 - \mathbb{P}^1(k), \cool(\frac{\ell(w)-1}{2}))).\end{equation}
\noindent Now it suffices to determine the action of $K^{(m)}_1$ and of $\Gamma$ on the cohomology groups of this variety. Thus we have reduced 
the computation of the cohomology of $\overline{X}_w(1)$ to the cohomology of the Drinfeld upper halfplane $\Omega_{k}^2 = \mathbb{P}^1 - \mathbb{P}^1(k)$, which 
occurs in the theory of finite Deligne-Lusztig varieties: $\Omega_{k}^2$ is isomorphic to the Deligne-Lusztig variety corresponding to $SL_2(k)$ and
 to the unique nontrivial element of the Weyl group of $SL_2(k)$. 

In fact, $\mathbb{P}^1 - \mathbb{P}^1(k)$ is one-dimensional, thus all 
cohomology groups in degrees $r > 2$ vanish. Recall that 
\begin{equation}\label{cohomP1} H^r_c(\mathbb{P}^1, \cool) = \begin{cases} \cool & \text{ if } r = 0, \\ \cool(-1) & \text{ if } r = 2, \\ 0 & \text{else}.
\end{cases}\end{equation}

\noindent Consider now the Mayer-Vietoris long exact cohomology sequence for the cohomology with compact supports arising from the decomposition 
$\mathbb{P}^1 = (\mathbb{P}^1 - \mathbb{P}^1(k)) \cup \mathbb{P}^1(k)$:

\begin{equation}\nonumber \xymatrix{0 \ar[r] & H^0_c(\mathbb{P}^1 - \mathbb{P}^1(k), \cool) \ar[r] & \cool \ar[r] & H^0_c(\mathbb{P}^1(k), \cool) \\
\ar[r] & H^1_c(\mathbb{P}^1 - \mathbb{P}^1(k), \cool) \ar[r] & 0 \ar[r] & 0 \\
\ar[r] & H^2_c(\mathbb{P}^1 - \mathbb{P}^1(k), \cool) \ar[r] &\cool(-1) \ar[r] & 0.}
\end{equation}

\noindent Since $\mathbb{P}^1 - \mathbb{P}^1(k)$ and $\mathbb{P}^1(k)$ are both stable under $K^{(m)}_1$, the $\cool$-vector spaces in the above sequence 
are $K^{(m)}_1$-representations and the morphisms are $K^{(m)}_1$-equivariant. $\mathbb{P}^1(k)$ represents the alcoves of $\ratbuild$ around $P_m$, 
and $K^{(m)}_1$ acts transitively on them. Hence the group $H^0_c(\mathbb{P}^1(k), \cool)$ as $K^{(m)}_1$-representation is the induced representation 
from the trivial representation of the stabilizer of one alcove (we take $C^{(m)}_M$). From Lemma \ref{reptriv} follows: 
$H_c^0(\mathbb{P}^1, \cool) = 1_{\cool}$ as $K_1^{(m)}$-representation. The map on the right in the first line is the inclusion in (\ref{eqnIndVtilde}). 
Thus
\begin{eqnarray*} H^0_c(\mathbb{P}^1 - \mathbb{P}^1(k), \cool) &=& 0, \quad \text{ and} \\ 
 H^1_c(\mathbb{P}^1- \mathbb{P}^1(k), \cool) &=& \overline{\Stt}. \end{eqnarray*}

\noindent as $K^{(m)}_1$-representations. As a representation of $\Gamma$ the latter is trivial. Further, 
$$H^2_c(\mathbb{P}^1 - \mathbb{P}^1(k), \cool) = H_c^2(\mathbb{P}^1, \cool) = 1_{\cool}(-1)$$ 
\noindent is trivial as $K^{(m)}_1$-representation by Lemma \ref{reptriv}. For any scheme $Y$ over $\bar{k}$ and for any $n$ we have: 
\begin{equation}\label{twistegal} H^r_c(Y,\cool(n)) = H^r_c(Y,\cool) \otimes \cool(n).\end{equation}
Hence

\[ H^r_c(\mathbb{P}^1 - \mathbb{P}^1(k), \cool(\frac{\ell(w)-1}{2})) = \begin{cases} \overline{\Stt}(\frac{\ell(w)-1}{2}) & \text{ if } r = 1, \\
1_{\cool}(\frac{\ell(w)-3}{2}) & \text{ if } r = 2, \\ 0 & \text{ else}.
\end{cases}\]
From this and (\ref{CindSL}) follows: 
\begin{eqnarray*} H^{\ell(w)}_c(\overline{X}_w(1),\cool) &=& \cIndd\nolimits_{K^{(m)}_1}^G \overline{\Stt}(\frac{\ell(w)-1}{2}), \\
 H^{\ell(w)+1}_c(\overline{X}_w(1),\cool) &=& \cIndd\nolimits_{K^{(m)}_1}^G 1_{\cool}(\frac{\ell(w)-3}{2}), \end{eqnarray*}
and $H^r_c(\overline{X}_w(1),\cool) = 0$, for $r \neq \ell(w), \ell(w) + 1$.
\end{proof}

In particular the representations  $H^{\ast}_c(\overline{X}_w(1), \cool)$ are smooth. We will further investigate them in subsection \ref{repsb1}.


\subsection{Some representation theory of $GL_2(F)$}  \mbox{}

Here we recall briefly some aspects of the representation theory of the locally profinite group $G := GL_2(F)$. 
For a more detailed discussion we refer to \cite{BH}, paragraph 9. We assume here all representations to be smooth. By a character of a locally profinite group $N$ 
we always mean a continuous homomorphism $N \rightarrow \cool^{\times}$. To give such a character is equivalent to giving a (smooth) one-dimensional 
representation of $N$. For $m \in \{0,1\}$ the subgroups $K^{(m)}_1$ of $G$ are compact and open.

Let $B$ denote the Borel subgroup of upper triangular matrices in $G$. 
Recall further that $T(F)$ is the diagonal torus of $G$ contained in $B$. Thus 
$$B  = \{ \begin{pmatrix} a & b \\ 0 & c \end{pmatrix} \in G \colon a,c \in F^{\times}, b \in F\} \quad \text{ and } \quad 
T(F) = \{ \begin{pmatrix} a & 0 \\ 0 & c \end{pmatrix} \in G \colon a,c \in F^{\times}\}.$$
The map $\begin{pmatrix} a & b \\ 0 & c \end{pmatrix} \mapsto \begin{pmatrix} a & 0 \\ 0 & c \end{pmatrix}$ gives 
a projection from $B$ to $T(F)$. Since $T(F)$ is abelian, any irreducible representation $\chi$ of $T(F)$ is a character. We can inflate 
it to a character of $B$ by using this projection $B \rightarrow T(F)$. This inflated character of $B$ will also be denoted by $\chi$.

If $\phi$ is a character of $F^{\times}$, then set $$\phi_G := \phi \circ \dett.$$
Then $\phi_G$ is a character of $G$. If $\phi$ ranges over all characters of $F^{\times}$, then $\phi_G$ ranges over all characters of $G$ 
(\cite{BH} 9.2 Proposition).

We can also associate to $\phi$ the character $\phi_T$ of $T(F)$, defined by 
$$\phi_T(\begin{pmatrix} a & 0 \\ 0 & c \end{pmatrix}) := \phi(a)\phi(b).$$

\noindent In other words $\phi_T = \phi \otimes \phi$. The (twisted) Steinberg representation $\phi \cdot \Stt_G$ of $G$ is defined by the following exact 
sequence:

\begin{equation}\label{steinbergdefG} \xymatrix{0 \ar[r] & \phi_G \ar[r] & \Indd\nolimits_B^G \phi_T \ar[r] & \Stt\nolimits_G \ar[r] & 0}. \end{equation}

\noindent The class of all irreducible smooth representations of $G$ can be divided in two disjoint subclasses: the principal series 
(or noncuspidal) representations and the cuspidal representations.
Now, following (\cite{BH}, 9.11) we list all isomorphism classes of irreducible noncuspidal representations of $G$:

\begin{itemize} 
\item[(1)] the irreducible induced representations $\Indd^G_B \chi$ where $\chi$ ranges over the characters of $T(F)$;
\item[(2)] the one-dimensional representations $\phi_G$, where $\phi$ ranges over the characters of $F^{\times}$; 
\item[(3)] the special representations $\phi \cdot \Stt_G$, where $\phi$ ranges over the characters of $F^{\times}$.
\end{itemize} 

\begin{Def} A character $\chi$ of $T(F)$ resp. of $F^{\times}$ is called unramified if $\chi$ is trivial on the subgroup 
$T(\mathfrak{o}_F)$ resp. on $\mathfrak{o}_F^{\times}$.
\end{Def}

\begin{Prop}[Frobenius Reciprocity] Let $H$ be a locally profinite group and $K$ an open subgroup. Let $\pi$ be a smooth representation of $H$, and $\chi$ a 
smooth representation of $K$. Then there are the following isomorphisms which are functorial in both variables:
\begin{itemize}
\item[(i)] $\Homm_H(\cIndd_K^H \chi, \pi) = \Homm_K(\chi, \pi).$ 
\item[(ii)] $\Homm_H(\pi, \Indd_K^H \chi) = \Homm_K(\pi, \chi).$
\end{itemize}
\end{Prop}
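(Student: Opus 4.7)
The plan is to exhibit explicit, functorial inverse bijections in both (i) and (ii); both are standard Frobenius reciprocity statements for smooth representations of locally profinite groups, and the passage from the discrete case requires only that one respect smoothness and exploit compactness of supports modulo $K$ in (i).

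For (i), write $(\chi,V)$ for the given smooth representation of $K$. For each $v\in V$ I will use the function $f_v\in\cIndd_K^H\chi$ supported on $K$ with $f_v(k)=\chi(k)v$; smoothness of $\chi$ ensures $f_v$ really lies in $\cIndd_K^H\chi$. The map from left to right sends $\Phi\in\Homm_H(\cIndd_K^H\chi,\pi)$ to $v\mapsto\Phi(f_v)$; $K$-equivariance is immediate from the identity $k\cdot f_v=f_{\chi(k)v}$ (which one reads off from the action convention $(gf)(x)=f(xg)$) together with $H$-equivariance of $\Phi$. In the other direction, every $f\in\cIndd_K^H\chi$ decomposes uniquely as $f=\sum_i h_i^{-1}\cdot f_{f(h_i)}$, the sum being finite and indexed by the right cosets $\{Kh_i\}$ supporting $f$ (finite because $K$ is open and the image of $\mathrm{supp}(f)$ in $K\backslash H$ is compact). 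Given $\psi\in\Homm_K(V,\pi)$ I would define
\[
\Phi(f):=\sum_i \pi(h_i^{-1})\,\psi(f(h_i))
\]
and verify $H$-equivariance and that the two composites with the first map are identities.

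For (ii), the map from left to right is evaluation at the identity: $\Phi\mapsto (v\mapsto\Phi(v)(1))$. $K$-equivariance follows since $\Phi(\pi(k)v)(1)=(k\cdot\Phi(v))(1)=\Phi(v)(k)=\chi(k)\Phi(v)(1)$. Conversely, given $K$-equivariant $\psi\colon\pi\to V$, I set $\Phi(v)(h):=\psi(\pi(h)v)$; the transformation rule for $\Indd_K^H\chi$ is forced by $K$-equivariance of $\psi$, and smoothness of $\pi$ ensures the axiom (2): any compact open subgroup fixing $v$ also fixes $\Phi(v)$, so $\Phi(v)\in\Indd_K^H\chi$. $H$-equivariance of $\Phi$ is a direct check, and functoriality in both variables is automatic from the explicit formulas.

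The only nontrivial point will be the well-definedness of the inverse map in (i): the sum $\sum_i \pi(h_i^{-1})\psi(f(h_i))$ must be independent of the chosen coset representatives. Replacing $h_i$ by $kh_i$ with $k\in K$ yields
\[
\pi((kh_i)^{-1})\,\psi(f(kh_i)) = \pi(h_i^{-1})\pi(k^{-1})\,\psi(\chi(k)f(h_i)) = \pi(h_i^{-1})\,\psi(f(h_i)),
\]
the last equality being exactly the $K$-equivariance of $\psi$. Once this is settled, the rest of the proof is routine bookkeeping with the explicit formulas.
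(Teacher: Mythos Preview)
Your argument is correct and is essentially the standard construction one finds in the cited reference: the explicit units of the adjunctions $v\mapsto f_v$ and ``evaluation at $1$'', together with the decomposition of a compactly supported function as a finite sum of translates of the $f_v$'s, are exactly how the isomorphisms are built. One small omission worth filling in explicitly is the verification that your inverse map in (i) is $H$-equivariant: if $f$ is supported on $\bigcup_i K h_i$, then $g\cdot f$ is supported on $\bigcup_i K h_i g^{-1}$, and taking the representatives $h_i g^{-1}$ gives
\[
\Phi(g\cdot f)=\sum_i \pi\big((h_i g^{-1})^{-1}\big)\,\psi\big((g\cdot f)(h_i g^{-1})\big)=\sum_i \pi(g)\pi(h_i^{-1})\,\psi(f(h_i))=\pi(g)\,\Phi(f).
\]
With that in place, the remaining ``routine bookkeeping'' really is routine.

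The paper itself does not give a proof at all; it simply refers to \cite{BH} 2.4--5. So there is no real comparison to be made: you have supplied the argument the paper omits, and done so correctly.
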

\begin{proof} For the proof we refer to \cite{BH} 2.4-5.
\end{proof}


\subsection{The representations $H^{\ast}_c(\overline{X}_w(1), \cool)$}\label{repsb1}  \mbox{}

If $V, W$ are two $G \times \Gamma$-representations then the $\cool$-vector space $\Homm_G(V,W)$ is in a natural way a $\Gamma$-representation: 
$(\gamma f)(v) := \gamma f(\gamma^{-1}v)$. We see all representations $\Indd\nolimits^{G}_{B}\chi, \phi_G, \phi \cdot \Stt_G$ of the last section 
as trivial $\Gamma$-representations. Thus the $\cool$-vector spaces \\
$\Homm\nolimits_G(H^{r}_c(\overline{X}_w(1), \cool), \Indd\nolimits^{G}_{B}\chi)$, ... 
are $\Gamma$-representations.

\begin{thm}\label{thmb1} Let $1 \neq w \in W_a$ such that $X_w(1) \neq \emptyset$.
\begin{itemize}
\item[(i)] Let $\chi$ be a character of $T(F)$. Then
$$\Homm\nolimits_G(H^{\ell(w)+1}_c(\overline{X}_w(1), \cool), \Indd\nolimits^{G}_{B}\chi) = \begin{cases} \cool(\frac{3-\ell(w)}{2}) & \text{ if } \chi 
\text{ unramified, }\\ 0 & \text{ else. }\end{cases}$$

\item[(ii)] Let $\phi$ be a character of $F^{\times}$. Then 
$$\Homm\nolimits_G(H^{\ell(w)+1}_c(\overline{X}_w(1), \cool), \phi_G) = \begin{cases} \cool(\frac{3-\ell(w)}{2}) & \text{ if } \phi \text{ unramified, }\\ 0 
& \text{ else. }\end{cases}$$

\item[(iii)] Let $\phi$ be a character of $F^{\times}$. Then $\Homm\nolimits_G(H^{\ell(w)+1}_c(\overline{X}_w(1), \cool), \phi \cdot \Stt_G) = 0.$
\item[(iv)] Let $\pi$ be a cuspidal representation of $G$. Then \\ $\Homm\nolimits_G(H^{\ell(w)+1}_c(\overline{X}_w(1), \cool), \pi) = 0.$
\end{itemize}
\end{thm}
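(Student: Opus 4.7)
The plan is to feed the description of $H^{\ell(w)+1}_c(\overline{X}_w(1), \cool)$ provided by Proposition \ref{hehe}(ii) into Frobenius reciprocity and then compute $K^{(m)}_1$-invariants of each of the four classes of irreducible $G$-representations. Proposition \ref{hehe}(ii) gives
\[ H^{\ell(w)+1}_c(\overline{X}_w(1), \cool) \;\cong\; \cIndd\nolimits^G_{K^{(m)}_1} 1_{\cool}\bigl(\tfrac{\ell(w)-3}{2}\bigr), \]
so Frobenius reciprocity identifies $\Homm_G(H^{\ell(w)+1}_c(\overline{X}_w(1), \cool), \pi)$ with $\pi^{K^{(m)}_1}\bigl(\tfrac{3-\ell(w)}{2}\bigr)$, the Galois twist coming from the general identity $\Homm_G(V(n),\pi)=\Homm_G(V,\pi)(-n)$ valid whenever $\pi$ carries the trivial $\Gamma$-action. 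The theorem thus reduces to four fixed-point computations.

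For (i), I would observe that $g_m\in B$, hence $G=B\cdot K^{(m)}_1$ by Iwasawa, and apply Lemma \ref{cIndIso} to obtain an isomorphism of $K^{(m)}_1$-representations $\Indd^G_B\chi\cong\Indd^{K^{(m)}_1}_{B\cap K^{(m)}_1}\chi$. Taking $K^{(m)}_1$-invariants forces functions to be constant, so the invariant subspace is nonzero precisely when $\chi$ is trivial on $B\cap K^{(m)}_1$; an easy check shows that for both $m=0$ and $m=1$ the subgroup $B\cap K^{(m)}_1$ surjects onto $T(\mathfrak{o}_F)$ under $B\to T(F)$, which makes the triviality condition equivalent to $\chi$ being unramified. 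For (ii), $\phi_G|_{K^{(m)}_1}$ is trivial iff $\phi$ is trivial on $\dett K^{(m)}_1=\mathfrak{o}_F^{\times}$, i.e.\ iff $\phi$ is unramified; when unramified, $\phi_G$ itself is the one-dimensional invariant space.

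For (iii), I apply the $K^{(m)}_1$-invariants functor, exact on smooth representations since $K^{(m)}_1$ is profinite and $\cool$ has characteristic zero, to the defining sequence (\ref{steinbergdefG}) to obtain
\[ 0 \to \phi_G^{K^{(m)}_1} \to \bigl(\Indd\nolimits^G_B \phi_T\bigr)^{K^{(m)}_1} \to (\phi\cdot\Stt\nolimits_G)^{K^{(m)}_1} \to 0. \]
If $\phi$ is unramified, then $\phi_T=\phi\otimes\phi$ is too, so by (i) and (ii) the first two terms are both one-dimensional; the induced map between them is nonzero (since the original inclusion is injective) and therefore an isomorphism, forcing the third term to vanish. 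If $\phi$ is ramified, both of the first two terms already vanish. For (iv), the spherical theory of $GL_2(F)$ says that an irreducible smooth representation with a nonzero $GL_2(\mathfrak{o}_F)$-fixed vector is an unramified principal series or an unramified character, never cuspidal; the map $v\mapsto g_m^{-1}v$ identifies $\pi^{K^{(m)}_1}$ with $\pi^{GL_2(\mathfrak{o}_F)}$, which vanishes for such $\pi$. The only non-formal input, and hence the main technical point, is the dimension comparison in (iii): one must note that the induced map on invariants is injective in order to conclude it is an isomorphism on one-dimensional spaces and thereby kill the Steinberg quotient.
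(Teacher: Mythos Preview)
Your proof is correct and follows essentially the same route as the paper: reduce via Proposition~\ref{hehe}(ii) and Frobenius reciprocity to computing $\pi^{K^{(m)}_1}$, then handle (i) with the Iwasawa decomposition $G=B\cdot K^{(m)}_1$ and Lemma~\ref{cIndIso}, (ii) by inspecting $\det(K^{(m)}_1)=\mathfrak{o}_F^{\times}$, and (iii) by a dimension count on the $K^{(m)}_1$-invariants of the exact sequence (\ref{steinbergdefG}). The only deviation is in (iv): the paper observes $I^{(0)}\subseteq K^{(m)}_1$ and invokes \cite{BH}~14.3 (Iwahori-fixed vectors preclude cuspidality), whereas you conjugate by $g_m$ to reduce to $GL_2(\mathfrak{o}_F)$-fixed vectors and appeal to the spherical classification---both are standard and equally short.
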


\begin{proof} Fix a $w \in W_a$ such that $X_w(1) \neq \emptyset$. The Frobenius morphism $\sigma \in \Gamma$ acts on 
$H^{\ell(w)+1}_c (X_w(1) ,\cool)$ by multiplication with $q^{\frac{\ell(w)-3}{2}}$ (see Proposition \ref{hehe}), and trivial on all representations standing 
on the right side in $\Homm_G(\cdot,\cdot)$. Thus it acts by multiplication with $q^{\frac{3-\ell(w)}{2}}$ on the Hom-spaces.

Let now $m \in \{0,1\}$ be as in Proposition \ref{propb1}. Then $B$ and $K^{(m)}_1$ are open subgroups of $G$, and they satisfy 
$B \cdot K^{(m)}_1 = G$. Hence from Proposition \ref{hehe} we get with Frobenius reciprocity and Lemma \ref{cIndIso}: 

\begin{eqnarray*} \Homm\nolimits_G(H^{\ell(w)+1}_c(\overline{X}_w(1), \cool), \Indd\nolimits^{G}_{B}\chi) = \Homm\nolimits_G(\cIndd\nolimits^G_{K^{(m)}_1} 1_{\cool}, \Indd\nolimits^G_B\chi) = \\
\nonumber = \Homm\nolimits_{K^{(m)}_1}(1_{\cool}, \Indd\nolimits^{K^{(m)}_1}_{B \cap K^{(m)}_1} \chi) = 
\Homm\nolimits_{B \cap K^{(m)}_1} (1_{\cool}, \chi) = \begin{cases} \cool & \text{ if }  
1_{\cool} = \chi|_{B \cap K^{(m)}_1}, \\ 0 & \text{ else. }\end{cases} \end{eqnarray*}

\noindent But 
\begin{equation}\label{BKm} 
B \cap K^{(m)}_1 = \begin{pmatrix} \mathfrak{o}_F^{\times} & t^{-m}\mathfrak{o}_F \\ 0 & \mathfrak{o}_F^{\times} \end{pmatrix}
\end{equation} 

\noindent is mapped onto $T(\mathfrak{o}_F)$ under the projection from $B$ to $T(F)$, and hence $1_{\cool} = \chi|_{B \cap K^{(m)}_1}$ if and only if $\chi$ 
is unramified. Hence (i) follows. We prove (ii) similarly:

\begin{eqnarray*} \Homm\nolimits_G(H^{\ell(w)+1}_c(\overline{X}_w(1), \cool), \phi_G) = \Homm\nolimits_G(\cIndd\nolimits^G_{K^{(m)}_1} 1_{\cool}, \phi_G) = \\
 = \Homm\nolimits_{K^{(m)}_1} (1_{\cool}, \phi_G) = \begin{cases} \cool & \text{ if }  1_{\cool} = \phi \circ \dett |_{K^{(m)}_1}, \\ 0 & \text{ else. }\end{cases}\end{eqnarray*}

\noindent But $\dett (K^{(m)}_1) = \mathfrak{o}_F^{\times}$. Thus $\phi_G|_{K^{(m)}_1} = 1_{\cool}$ if and only if $\phi$ is unramified.

To prove (iii) and (iv), notice that if $\pi$ is some smooth representation of $G$ with \\
$\Homm_G(\cIndd_{K^{(m)}_1}^{G} 1_{\cool}, \pi) \neq 0$ then $\pi$ has a $K^{(m)}_1$-stable vector:

\begin{equation}\label{stabKvec}\pi^{K^{(m)}_1} = \Homm\nolimits_{K^{(m)}_1}(1_{\cool}, \pi) = \Homm\nolimits_G(\cIndd\nolimits_{K^{(m)}_1}^{G} 1_{\cool}, \pi) \neq 0.\end{equation} 

\noindent To prove (iv), notice that if $\pi neq 0$ contains the trivial character on $I^{(0)} \subseteq K^{(m)}_1$ and is not cuspidal 
by \cite{BH} 14.3 Proposition. To prove (iii), it is enough to show that $(\phi \cdot \Stt_G)^{K^{(m)}_1} = 0$. 
Every smooth $K^{(m)}_1$-representation is semisimple since $K^{(m)}_1$ is compact (\cite{BH} 2.2 Lemma). 
Thus we have from (\ref{steinbergdefG}):
\begin{equation}\label{stsplit} \phi_G|_{K^{(m)}_1} \oplus \phi \cdot \Stt\nolimits_G|_{K^{(m)}_1}  = (\Indd\nolimits_B^G \phi_{T})|_{K^{(m)}_1}.
\end{equation}
Hence: 
$$\Homm\nolimits_{K^{(m)}_1} (1_{\cool}, \phi_G) \oplus \Homm\nolimits_{K^{(m)}_1} (1_{\cool}, \phi \cdot \Stt\nolimits_G) 
= \Homm\nolimits_{K^{(m)}_1} (1_{\cool}, \Indd\nolimits_B^G \phi_T).$$ 

\noindent The proofs of (i) and (ii) give:

\begin{equation}\nonumber \Homm\nolimits_{K^{(m)}_1} (1_{\cool}, \Indd\nolimits_B^G \phi_T) = \Homm\nolimits_{K^{(m)}_1} (1_{\cool}, \phi_G) = 
\begin{cases} \cool & \text{if } \phi \text{ unramified,} \\ 0 & \text{else.} \end{cases} \end{equation}
In both cases ($\phi$ unramified and $\phi$ not unramified), by dimension counting, it follows 
$$(\phi \cdot \Stt\nolimits_G)^{K^{(m)}_1} = \Homm\nolimits_{K^{(m)}_1} (1_{\cool}, \phi \cdot \Stt\nolimits_G) = 0. \qedhere$$
\end{proof}

\begin{thm}\label{thm2b1} Let $1 \neq w \in W_a$ such that $X_w(1) \neq \emptyset$.
\begin{itemize}
\item[(i)] Let $\chi$ be a character of $T(F)$. Then
$$\Homm\nolimits_G(H^{\ell(w)}_c(\overline{X}_w(1), \cool), \Indd\nolimits^{G}_{B}\chi) = \begin{cases} \cool(\frac{1-\ell(w)}{2}) & \text{ if } \chi 
\text{ unramified, }\\ 0 & \text{ else. }\end{cases}$$

\item[(ii)] Let $\phi$ be a character of $F^{\times}$. Then $\Homm\nolimits_G(H^{\ell(w)}_c(\overline{X}_w(1), \cool), \phi_G) = 0$.

\item[(iii)] Let $\phi$ be a character of $F^{\times}$. Then 
$$\Homm\nolimits_G(H^{\ell(w)}_c(\overline{X}_w(1), \cool ), \phi \cdot \Stt\nolimits_G) = \begin{cases} \cool(\frac{1-\ell(w)}{2}) & \text{ if } \phi \text{ unramified, }\\ 0 
& \text{ else. }\end{cases}$$

\item[(iv)] Let $\pi$ be a cuspidal representation of $G$. Then $\Homm\nolimits_G(H^{\ell(w)}_c(\overline{X}_w(1), \cool), \pi) = 0.$
\end{itemize}
\end{thm}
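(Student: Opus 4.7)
The plan is to exploit Proposition \ref{hehe}, which identifies $H^{\ell(w)}_c(\overline{X}_w(1), \cool)$ with $\cIndd^G_{K^{(m)}_1} \overline{\Stt}(\frac{\ell(w)-1}{2})$, and to parallel the proof of Theorem \ref{thmb1} with $\overline{\Stt}$ in place of $1_{\cool}$. Since $\Gamma$ acts trivially on every target representation in (i)--(iv), the topological generator $\sigma$ acts on the Hom spaces by multiplication by $q^{(1-\ell(w))/2}$, producing the Tate twist $\cool(\frac{1-\ell(w)}{2})$ in each nonzero case; it therefore suffices to compute the underlying $\cool$-dimension. By Frobenius reciprocity this dimension equals $\dim \Homm_{K^{(m)}_1}(\overline{\Stt}, \pi|_{K^{(m)}_1})$.

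The key input is the split short exact sequence (\ref{eqnIndVtilde}) of smooth $K^{(m)}_1$-representations, $0 \to 1_{\cool} \to \Indd^{K^{(m)}_1}_{I^{(m)}} 1_{\cool} \to \overline{\Stt} \to 0$. Applying $\Homm_{K^{(m)}_1}(-,\pi)$ to it and Frobenius reciprocity on the two outer terms yields $\dim \Homm_{K^{(m)}_1}(\overline{\Stt}, \pi) = \dim \pi^{I^{(m)}} - \dim \pi^{K^{(m)}_1}$. The subtracted quantity $\dim \pi^{K^{(m)}_1} = \dim \Homm_G(\cIndd^G_{K^{(m)}_1} 1_{\cool}, \pi)$ is already supplied by Theorem \ref{thmb1}, so the remaining task is the determination of $\dim \pi^{I^{(m)}}$. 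Since $I^{(m)} = g_m I^{(0)} g_m^{-1}$, conjugation by $g_m$ reduces us to $m = 0$.

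For $\pi = \Indd^G_B \chi$, the Iwasawa decomposition $G = B \cdot K^{(0)}_1$ combined with Lemma \ref{cIndIso} identifies the restriction $\Indd^G_B \chi|_{K^{(0)}_1}$ with $\Indd^{K^{(0)}_1}_{B \cap K^{(0)}_1} \chi$; a Mackey decomposition over the double-coset space $I^{(0)} \backslash K^{(0)}_1 / (B \cap K^{(0)}_1)$, which via the reduction $\pi_0$ corresponds to $\overline{B} \backslash GL_2(k) / \overline{B}$ (the two-element set from the finite Bruhat decomposition), then shows $\dim (\Indd^G_B \chi)^{I^{(0)}} = 2$ for unramified $\chi$ and $0$ otherwise (in the ramified case a nontrivial character of a pro-$p$ subgroup appears on each double-coset contribution and kills the invariants). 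For $\pi = \phi_G$, $\dim \phi_G^{I^{(0)}}$ is $1$ or $0$ depending on whether $\phi$ is unramified. For $\pi = \phi \cdot \Stt_G$, restricting (\ref{steinbergdefG}) to the compact group $K^{(0)}_1$ yields a split decomposition (smooth representations of compact groups are semisimple by \cite{BH} 2.2 Lemma), so that $(\Indd^G_B \phi_T)^{I^{(0)}} = \phi_G^{I^{(0)}} \oplus (\phi \cdot \Stt_G)^{I^{(0)}}$, giving $\dim (\phi \cdot \Stt_G)^{I^{(0)}} = 2-1 = 1$ when $\phi$ is unramified and $0$ otherwise. Cuspidal $\pi$ satisfy $\pi^{I^{(0)}} = 0$ by the contrapositive of \cite{BH} 14.3 Proposition. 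Subtracting $\dim \pi^{K^{(m)}_1}$ now gives $2-1=1$ for (i) unramified, $1-1=0$ for (ii) unramified, $1-0=1$ for (iii) unramified, and $0-0=0$ in every other subcase, matching the claims. The main obstacle is the Mackey computation establishing the dimension of $(\Indd^G_B \chi)^{I^{(0)}}$, in particular its vanishing for non-unramified $\chi$; once this is in hand, everything else is formal bookkeeping with the split sequence (\ref{eqnIndVtilde}) and the already-known $K^{(m)}_1$-invariants from Theorem \ref{thmb1}.
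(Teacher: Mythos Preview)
Your argument is correct and takes a somewhat different route from the paper's. Both proofs start by invoking Proposition \ref{hehe} and Frobenius reciprocity to reduce to $\Homm_{K^{(m)}_1}(\overline{\Stt},\pi)$, but diverge thereafter. The paper first analyses $\overline{\Stt}$ itself: via the finite Mackey formula it proves $\Stt_{GL_2(k)}|_{\overline{B}}\cong\Indd^{\overline{B}}_{\overline{T}}1_{\cool}$, hence (inflating) $\overline{\Stt}|_{I^{(m)}}\cong\Indd^{I^{(m)}}_{U^{(m)}}1_{\cool}$ with $U^{(m)}=\pi_m^{-1}(\overline{T})$; for (i) a second application of Lemma \ref{cIndIso} (using $U^{(m)}\cdot(B\cap I^{(m)})=I^{(m)}$) then collapses the computation to $\Homm_{B\cap U^{(m)}}(1_{\cool},\chi)$, and (ii)--(iv) are handled by irreducibility/dimension counting together with the split sequence \eqref{steinbergdefG}. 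You instead use the split of \eqref{eqnIndVtilde} once to obtain the uniform subtraction formula $\dim\Homm_{K^{(m)}_1}(\overline{\Stt},\pi)=\dim\pi^{I^{(m)}}-\dim\pi^{K^{(m)}_1}$, and then compute Iwahori-fixed vectors of each target $\pi$ directly. Your approach is more conceptual---it makes contact with the standard Iwahori Hecke-module picture and treats all four cases on the same footing---while the paper's approach avoids computing $\pi^{I^{(0)}}$ in full but is a bit more ad hoc case by case. Both ultimately rest on the finite Bruhat decomposition $GL_2(k)=\overline{B}\cup\overline{B}\omega\overline{B}$.

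One small expository point: in your justification for the vanishing of $(\Indd^G_B\chi)^{I^{(0)}}$ when $\chi$ is ramified, the phrase ``a nontrivial character of a pro-$p$ subgroup'' is not accurate. The intersections $B\cap K^{(0)}_1$ and $I^{(0)}\cap\omega(B\cap K^{(0)}_1)\omega^{-1}$ both surject onto $T(\mathfrak{o}_F)$ (which is not pro-$p$), and the correct reason is simply that $\chi$ (resp.\ $\chi^{\omega}$) is nontrivial on these intersections exactly when $\chi$ is ramified, so each double-coset contribution to the $I^{(0)}$-invariants vanishes. This does not affect the validity of the argument.
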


\begin{proof} Fix a $w \in W_a$ such that $X_w(1) \neq \emptyset$. Analogous to the last theorem, $\sigma \in \Gamma$ 
acts on $H^{\ell(w)}_c (X_w(1) ,\cool)$ by multiplication with $q^{\frac{\ell(w)-1}{2}}$, and trivially on all representations occuring on the right hand 
side in $\Homm_G(\cdot,\cdot)$. Thus it acts by multiplication with $q^{\frac{1-\ell(w)}{2}}$ on the Hom-spaces.

Let now $m \in \{0,1\}$ be as in Proposition \ref{propb1}. Set: 

$$\omega^{(m)} := \begin{pmatrix} 0 & t^{-m} \\ t^m & 0 \end{pmatrix} \in G, \qquad U^{(m)} := \omega^{(m)} I^{(m)} (\omega^{(m)})^{-1} \cap I^{(m)}.$$

\noindent Let further $\overline{T} = \begin{pmatrix} k^{\times} & 0 \\ 0 & k^{\times} \end{pmatrix}$ be the diagonal torus in $GL_2(k)$. Then $U^{(m)}$ is exactly the 
preimage of $\overline{T}$ in $K^{(m)}_1$ under $\pi_m$ and we have: 

\begin{equation}\label{Ubest} U^{(0)} = \begin{pmatrix} \mathfrak{o}_F^{\times} & t\mathfrak{o}_F \\ t\mathfrak{o}_F & \mathfrak{o}_F^{\times} \end{pmatrix}, 
\qquad U^{(1)} = \begin{pmatrix} \mathfrak{o}_F^{\times} & \mathfrak{o}_F \\ t^2\mathfrak{o}_F & \mathfrak{o}_F^{\times} \end{pmatrix}.\end{equation}

\begin{Lm} We have the following isomorphism of $\overline{B}$-representations: 
$$\Stt\nolimits_{GL_2(k)}|_{\overline{B}} \cong \Indd\nolimits_{\overline{T}}^{\overline{B}} 1_{\cool}.$$  
\end{Lm} 

\begin{proof}[Proof of Lemma] Let $\omega = \begin{pmatrix} 0 & 1 \\ 1 & 0\end{pmatrix}$. The decomposition of $GL_2(k)$ in double cosets 
modulo $\overline{B}$ is given by $$GL_2(k) = \overline{B} \cup \overline{B}\omega\overline{B},$$ 
compare \cite{BH} 5.2. Further, $\omega \overline{B} \omega^{-1}$ is the subgroup of lower triangular matrices in $GL_2(k)$ and $\overline{B} \cap \omega \overline{B} 
\omega^{-1} = \overline{T}$. Hence the Mackey formula, \cite{We} 4.2 implies:
$$1_{\cool} \oplus \Stt\nolimits_{GL_2(k)}|_{\overline{B}} = (\Indd\nolimits^{GL_2(k)}_{\overline{B}} 1_{\cool})|_{\overline{B}} \cong 1_{\cool}  \oplus
\Indd\nolimits_{\overline{B} \cap \omega \overline{B} \omega^{-1}}^{\overline{B}} 1_{\cool} = 1_{\cool} \oplus \Indd\nolimits_{\overline{T}}^{\overline{B}} 1_{\cool}.$$
\noindent From this follows $\Stt_{GL_2(k)}|_{\overline{B}} \cong \Indd_{\overline{T}}^{\overline{B}} 1_{\cool}$.
\end{proof}

By commutativity of induction and inflation (Lemma \ref{infINd}), we get from the last Lemma: 
\begin{equation}\label{StU} \overline{\Stt}|_{I^{(m)}} \cong \Indd\nolimits_{U^{(m)}}^{I^{(m)}} 1_{\cool}. \end{equation}

\noindent Now we prove (i). The subgroups $B,K^{(m)}_1$ of $G$ are open, and they satisfy $B \cdot K^{(m)}_1 = G$. Hence we get from Proposition \ref{hehe} 
with Frobenius reciprocity and Lemma \ref{cIndIso}: 

\begin{eqnarray*} \Homm\nolimits_G(H^{\ell(w)}_c (\overline{X}_w(1), \cool), \Indd\nolimits^{G}_{B}\chi) = 
\Homm\nolimits_G(\cIndd\nolimits^G_{K^{(m)}_1} \overline{\Stt}, \Indd\nolimits^G_B\chi) = \\
 = \Homm\nolimits_{K^{(m)}_1}(\overline{\Stt}, \Indd\nolimits^{K^{(m)}_1}_{B \cap K^{(m)}_1} \chi) = \Homm\nolimits_{B \cap K^{(m)}_1} 
(\overline{\Stt}, \chi). \end{eqnarray*}

\noindent But $B \cap K^{(m)}_1 = \begin{pmatrix} \mathfrak{o}_F^{\times} & t^{-m}\mathfrak{o}_F \\ 0 & \mathfrak{o}_F^{\times} \end{pmatrix} = 
B \cap I^{(m)}$, and $U^{(m)} \cdot (B \cap I^{(m)}) = I^{(m)}$, hence from (\ref{StU}) follows with Frobenius reciprocity and Lemma \ref{cIndIso}:

\begin{eqnarray*} \Homm\nolimits_{B \cap K^{(m)}_1} (\overline{\Stt}, \chi) &=& \Homm\nolimits_{B \cap I^{(m)}} (\Indd\nolimits_{U^{(m)}}^{I^{(m)}} 
1_{\cool}, \chi) = \Homm\nolimits_{B \cap I^{(m)}} (\Indd\nolimits_{B \cap U^{(m)}}^{B \cap I^{(m)}} 1_{\cool}, \chi) \\
 &=& \Homm\nolimits_{B \cap U^{(m)}} ( 1_{\cool}, \chi) =
\begin{cases} \cool & \text{ if }  \chi \text{ unramified,} \\ 0 & \text{ else, }\end{cases} \end{eqnarray*}
since $B \cap U^{(m)}$ is mapped onto $T(\mathfrak{o}_F)$ under the projection from $B$ to $T(F)$ (compare (\ref{Ubest})).

Now we prove (ii): let $\phi$ be a character of $F^{\times}$. Consider

\begin{equation}\nonumber 
\Homm\nolimits_G(\cIndd\nolimits_{K^{(m)}_1}^{G} \overline{\Stt}, \phi_G) = \Homm\nolimits_{K^{(m)}_1}(\overline{\Stt}, \phi_G). 
\end{equation}

\noindent Now $\overline{\Stt}$ and $\phi_G|_{K^{(m)}_1}$ are both irreducible and $\dimm_{\cool} \overline{\Stt} = q$, $\dimm_{\cool} \phi_G = 1$. 
Thus there are no morphisms between them. Thus (ii) follows.

In particular, $\Homm\nolimits_{K^{(m)}_1}(\overline{\Stt}, \phi_G) = 0$ and thus from (\ref{stsplit})  follows: 
$$\Homm\nolimits_{K^{(m)}_1}(\overline{\Stt}, \phi \cdot \Stt\nolimits_G) = \Homm\nolimits_{K^{(m)}_1}(\overline{\Stt}, \phi_G) \oplus \Homm\nolimits_{K^{(m)}_1}(\overline{\Stt}, \phi \cdot \Stt\nolimits_G) = 
\Homm\nolimits_{K^{(m)}_1}(\overline{\Stt}, \Indd\nolimits_B^G \phi_{T}).$$
From Frobenius reciprocity and the proof of part (i) follows now:
\begin{eqnarray*}
          \Homm\nolimits_G( \cIndd\nolimits_{K^{(m)}_1}^G \overline{\Stt}, \phi \cdot \Stt\nolimits_G) &=& 
          \Homm\nolimits_{K^{(m)}_1}(\overline{\Stt}, \phi \cdot \Stt\nolimits_G) = 
          \Homm\nolimits_{K^{(m)}_1}(\overline{\Stt}, \Indd\nolimits_B^G \phi_T)  \\
        &=& \begin{cases} \cool & \text{ if } \phi \text{ unramified, } \\ 0 & \text{else,} \end{cases}
\end{eqnarray*}
since $\phi_T$ is unramified if and only if $\phi$ is.

Now it remains to prove (iv). 

\begin{Lm} We have: $\Indd_{I^{(m)}}^{K^{(m)}_1}  1_{\cool} = \Indd_{I^{(0)}}^{K^{(m)}_1} 1_{\cool}$.
\end{Lm}
\begin{proof}
We recall from the proof of Proposition \ref{hehe} that $\Indd_{I^{(m)}}^{K_1^{(m)}} 1_{\cool}$ is the $K^{(m)}_1$-representation, which one obtains by 
letting $K^{(m)}_1$ act (transitively) on the alcoves in $\ratbuild$ having $P_m$ as a vertex, and then taking its action on the zero cohomology 
group of $\mathbb{P}^1(k)$: it is the induced representation of the trivial representation of the stabilizer of one of these alcoves. But $C^0_M$ has both 
$P_0$ and $P_1$ as a vertex and thus $\Indd_{I^{(m)}}^{K^{(m)}_1}  1_{\cool} = \Indd_{I^{(0)}}^{K^{(m)}_1} 1_{\cool}$.
\end{proof}

Thus we have: $1_{\cool} \oplus \overline{\Stt} = \Indd_{I^{(m)}}^{K_1^{(m)}} 1_{\cool} = \Indd_{I^{(0)}}^{K^{(m)}_1} 1_{\cool}$. 
Now consider any smooth representation $\pi$ of $G$ with $\Homm\nolimits_G(\cIndd_{K^{(m)}_1}^{G} \overline{\Stt}, \pi) \neq 0$. Then:

\begin{equation}\nonumber 0 \neq \Homm\nolimits_G(\cIndd\nolimits_{K^{(m)}_1}^{G} \overline{\Stt}, \pi) = \Homm\nolimits_{K^{(m)}_1}(\overline{\Stt}, \pi)  
\subseteq \Homm\nolimits_{K^{(m)}_1}(\Indd\nolimits_{I^{(0)}}^{K_1^{(m)}} 1_{\cool}, \pi) = \Homm\nolimits_{I^{(0)}}( 1_{\cool}, \pi). \end{equation}

\noindent Thus $\pi$ would contain the trivial character of $I^{(0)} = I \cap GL_2(F)$. Then $\pi$ is not cuspidal by \cite{BH} 14.3 Proposition.
\end{proof}

The next Corollary shows that $H^0_c(X_1(1), \cool)$ contains no new information.

\begin{Cor} As $G \times \Gamma$-modules we have: 
$$H^0_c(X_1(1), \cool) \cong \cIndd\nolimits_{K^{(0)}_1}^{G} 1_{\cool} \oplus \cIndd\nolimits_{K^{(0)}_1}^{G}\overline{\Stt}.$$
(i.e. the $\Gamma$-action is trivial).
\begin{itemize}
\item[(i)]Let $\chi$ be a character of $T(F)$. Then
$$\Homm\nolimits_G(H^0_c(\overline{X}_1(1), \cool), \Indd\nolimits^{G}_{B}\chi) = \begin{cases} \cool^2 & \text{ if } \chi 
\text{ unramified, }\\ 0 & \text{ else. }\end{cases}$$
\item[(ii)] Let $\phi$ be a character of $F^{\times}$. Then 
$$\Homm\nolimits_G(H^0_c(\overline{X}_1(1), \cool), \phi_G) = \begin{cases} \cool & \text{ if } \phi \text{ unramified, }\\ 0 
& \text{ else, }\end{cases}$$
\item[(iii)] Let $\phi$ be a character of $F^{\times}$. Then 
$$\Homm\nolimits_G(H^0_c(\overline{X}_1(1), \cool), \phi \cdot \Stt\nolimits_G) = \begin{cases} \cool & \text{ if } \phi \text{ unramified, }\\ 0 
& \text{ else. }\end{cases}$$

\item[(iv)] Let $\pi$ is a cuspidal representation of $G$, then $\Homm\nolimits_G(H^0_c(\overline{X}_1(1), \cool), \pi) = 0.$
\end{itemize}
\end{Cor}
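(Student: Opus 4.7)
The plan is to first establish the displayed isomorphism of $G \times \Gamma$-modules, and then read off parts (i)--(iv) by decomposing each $\Homm_G$ along the two summands and invoking the computations already carried out in Theorems \ref{thmb1} and \ref{thm2b1}.

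For the main isomorphism, Proposition \ref{hehe}(i) gives $H^0_c(\overline{X}_1(1), \cool) \cong \cIndd_{I^{(0)}}^{G} 1_{\cool}$ with trivial $\Gamma$-action (since $X_1(1)$ is a disjoint union of $k$-rational points, $\sigma$ acts trivially on the $H^0_c$ of each component). The subgroup $I^{(0)}$ is open of finite index $q+1$ in the compact group $K^{(0)}_1 = GL_2(\mathfrak{o}_F)$, so compact and ordinary induction from $I^{(0)}$ to $K^{(0)}_1$ agree, and by transitivity of induction
\[
\cIndd\nolimits_{I^{(0)}}^{G} 1_{\cool} \;\cong\; \cIndd\nolimits_{K^{(0)}_1}^{G} \Indd\nolimits_{I^{(0)}}^{K^{(0)}_1} 1_{\cool}.
\]
The split sequence (\ref{eqnIndVtilde}) identifies $\Indd_{I^{(0)}}^{K^{(0)}_1} 1_{\cool} \cong 1_{\cool} \oplus \overline{\Stt}$, and since induction commutes with finite direct sums, the claimed decomposition follows.

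For the Hom computations, each $\Homm_G\bigl(H^0_c(\overline{X}_1(1),\cool), \pi\bigr)$ splits as a sum of two terms. By Frobenius reciprocity, the first term $\Homm_G(\cIndd_{K^{(0)}_1}^G 1_{\cool}, \pi) = \pi^{K^{(0)}_1}$ was computed in the proof of Theorem \ref{thmb1}, using $B \cdot K^{(0)}_1 = G$, Lemma \ref{cIndIso}, and the identification (\ref{BKm}): it yields $\cool$ for unramified $\chi$ in (i), $\cool$ for unramified $\phi$ in (ii), $0$ in (iii) (by the split decomposition $1_{\cool} \oplus \phi \cdot \Stt_G|_{K^{(0)}_1} \cong \Indd_B^G \phi_T|_{K^{(0)}_1}$ and dimension counting), and $0$ in (iv) (by \cite{BH} 14.3, since any cuspidal representation has no $I^{(0)}$-fixed vector). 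The second term $\Homm_G(\cIndd_{K^{(0)}_1}^G \overline{\Stt}, \pi) = \Homm_{K^{(0)}_1}(\overline{\Stt}, \pi)$ was computed in the proof of Theorem \ref{thm2b1}, the main tool being identity (\ref{StU}) and the splitting (\ref{stsplit}): it yields $\cool$ for unramified $\chi$ in (i), $0$ in (ii) (since $\overline{\Stt}$ is irreducible of dimension $q$ while $\phi_G$ is one-dimensional), $\cool$ for unramified $\phi$ in (iii), and $0$ in (iv) (since $\overline{\Stt}$ is a quotient of $\Indd_{I^{(0)}}^{K^{(0)}_1} 1_{\cool}$, so any nonzero map would force $\pi^{I^{(0)}} \neq 0$, contradicting cuspidality).

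Adding the two contributions gives the stated values in (i)--(iv). I do not expect a genuine obstacle here: the statement is essentially a bookkeeping consequence of the fact that $\Indd_{I^{(0)}}^{K^{(0)}_1} 1_{\cool} = 1_{\cool} \oplus \overline{\Stt}$, which couples the $w = 1$ case to the two pieces ($r = \ell(w)$ with coefficient $\overline{\Stt}$, and $r = \ell(w)+1$ with coefficient $1_{\cool}$) appearing in the general-$w$ analysis. The only minor care needed is tracking the $\Gamma$-twist, which here is simply trivial throughout because $X_1(1)$ is zero-dimensional over $k$.
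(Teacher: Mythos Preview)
Your proposal is correct and follows essentially the same approach as the paper: establish the decomposition via Proposition \ref{hehe}(i), transitivity of compact induction, and the splitting $\Indd_{I^{(0)}}^{K^{(0)}_1} 1_{\cool} \cong 1_{\cool} \oplus \overline{\Stt}$, then read off (i)--(iv) from the computations underlying Theorems \ref{thmb1} and \ref{thm2b1}. The paper's own proof is terser (it simply says ``(i)--(iv) follow from Theorems \ref{thmb1} and \ref{thm2b1}''), whereas you spell out which summand contributes what, but the content is the same.
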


\begin{proof} 
The triviality of $\Gamma$-action follows directly from Proposition \ref{hehe}(i). We have $1_{\cool} \oplus \overline{\Stt} = \Indd_{I^{(0)}}^{K^{(0)}_1} 1_{\cool} = 
\cIndd_{I^{(0)}}^{K^{(0)}_1} 1_{\cool}$, 
since $K^{(0)}_1/I^{(0)}$ is finite. Now from Proposition \ref{hehe}(i), from additivity (compare \cite{BH} 2.4-5) and transitivity (see \cite{We} 4.1.15  for the case of finite groups) 
of the compact induction follow
\begin{eqnarray*} H^0_c(X_1(1), \cool) &=& \cIndd\nolimits_{I^{(0)}}^{G} 1_{\cool} \\ &=& \cIndd\nolimits_{K^{(0)}_1}^{G} \cIndd\nolimits_{I^{(0)}}^{K^{(0)}_1} 1_{\cool} \\&=&
\cIndd\nolimits_{K^{(0)}_1}^{G} (1_{\cool} \oplus \overline{\Stt}) = \cIndd\nolimits_{K^{(0)}_1}^{G}1_{\cool} \oplus \cIndd\nolimits_{K^{(0)}_1}^{G} \overline{\Stt}. 
\end{eqnarray*}
as $G$-representations.  Now (i)-(iv) follow from the Theorems \ref{thmb1} and \ref{thm2b1}.
\end{proof}


\subsection{The second case: cohomology of $\overline{X}_w(b)$ with diagonal $b \neq 1$}  \mbox{}

Let $b = \begin{pmatrix}  1 & 0 \\ 0 & t^{\alpha} \end{pmatrix}$ with $\alpha > 0$. Recall that $J_b = T(F)$ and 
$K^{(m)}_b = T(\mathfrak{o}_F)$ for $m \in \{0,1\}$. In particular all these groups are abelian.

\begin{Prop}\label{cohobdiag} 
Let $w \in W_a$ such that $X_w(b) \neq \emptyset$ (compare Proposition \ref{propbd}), and if $\ell(w) > \alpha$, let $m$ be as in Proposition \ref{propbd}. 
There are the following isomorphisms of $J_b \times \Gamma$-modules:
\begin{itemize}
\item[(i)] if $\ell(w) = \alpha$, 
$$H^r_c(\overline{X}_w(b), \cool) = \begin{cases} \cIndd\nolimits^{J_b}_{K^{(0)}_b} 1_{\cool}& 
\text{if } r = 0, \\ 0 & \text{else. } \end{cases}$$

\item[(ii)] If $\ell(w) > \alpha$, 
\begin{equation}\nonumber H^r_c(\overline{X}_w(b), \cool) = \begin{cases} \cIndd\nolimits^{J_b}_{K^{(m)}_b} 1_{\cool}(\frac{\ell(w)-\alpha-1}{2}) & \text{ if } r = \ell(w) - \alpha, \\ 
\cIndd\nolimits^{J_b}_{K^{(m)}_b} 1_{\cool}(\frac{\ell(w)-\alpha-3}{2})& \text{ if } r = \ell(w)-\alpha+1, \\
0 & \text{ else.} \end{cases} \end{equation}
\end{itemize}
\end{Prop}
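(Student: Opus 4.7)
The plan follows the strategy of Proposition \ref{hehe}. By Proposition \ref{propbdvar}, if $\ell(w) = \alpha$ then $X_w(b)$ is a disjoint union of points indexed by $J_b/K_b^{(0)}$, while if $\ell(w) > \alpha$ we have the $J_b$-equivariant isomorphism
$$X_w(b) \cong \coprod_{J_b/K_b^{(m)}} \mathbb{A}_k^{(\ell(w)-\alpha-1)/2} \times (\mathbb{P}^1_k - \{0,\infty\}).$$
Since cohomology with compact supports commutes with disjoint unions of finite-type schemes, the problem reduces to computing $H^*_c$ of a single connected component together with the actions of $K_b^{(m)}$ and $\Gamma$, and then applying compact induction over $J_b/K_b^{(m)}$.

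The case $\ell(w) = \alpha$ is immediate: a point has $H_c^0 = 1_{\cool}$ with trivial $\Gamma$-action and vanishes in higher degrees. For $\ell(w) > \alpha$, I first apply the identity (\ref{compdual}) to strip off the affine factor, which shifts degree by $\ell(w)-\alpha-1$ and introduces a Tate twist by $\frac{\ell(w)-\alpha-1}{2}$. This reduces the computation to $H^*_c(\mathbb{P}^1 - \{0,\infty\}, \cool)$ together with its $K_b^{(m)}$- and $\Gamma$-action, which I then determine from the long exact sequence attached to the closed-open decomposition $\mathbb{P}^1 = (\mathbb{P}^1 - \{0,\infty\}) \sqcup \{0,\infty\}$, exactly as in the proof of Proposition \ref{hehe}. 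Using (\ref{cohomP1}) together with $H_c^0(\{0,\infty\},\cool) = \cool^{\oplus 2}$ and the fact that the relevant connecting map is the diagonal, this sequence yields $H^0_c = 0$, $H^1_c = \cool$ and $H^2_c = \cool(-1)$. Combining with the twist from the affine factor and with (\ref{twistegal}) then produces $\cool(\frac{\ell(w)-\alpha-1}{2})$ in degree $\ell(w)-\alpha$ and $\cool(\frac{\ell(w)-\alpha-3}{2})$ in degree $\ell(w)-\alpha+1$, which matches the claimed Galois characters after compact induction.

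The one point that requires genuine verification is the triviality of the $K_b^{(m)}$-action on the cohomology groups of $\mathbb{P}^1 - \{0,\infty\}$. The key observation is that $K_b^{(m)} = T(\mathfrak{o}_F)$ maps to the identity in $\tilde{W} = N_{GL_2}(T)(L)/T(\mathfrak{o})$ and therefore fixes every alcove of $A_M$; in particular it fixes each of the two alcoves of $A_M$ adjacent to $P_m$, which under the identification from Corollary \ref{lmdp}(ii) are precisely the points $0$ and $\infty$. Hence $K_b^{(m)}$ acts trivially on $H^0_c(\{0,\infty\},\cool)$, and by Lemma \ref{reptriv} it acts trivially on $H^r_c(\mathbb{P}^1,\cool)$ for $r = 0, 2$. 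Since the long exact sequence is $K_b^{(m)}$-equivariant, triviality propagates to $H^1_c$ and $H^2_c$ of $\mathbb{P}^1 - \{0,\infty\}$. I expect this equivariance check to be the only nontrivial step in the argument.
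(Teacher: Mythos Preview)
Your proposal is correct and follows essentially the same approach as the paper: reduce via Proposition \ref{propbdvar} to a single component, strip the affine factor with (\ref{compdual}), and compute $H^{\ast}_c(\mathbb{P}^1-\{0,\infty\},\cool)$ from the long exact sequence of the pair $(\mathbb{P}^1,\{0,\infty\})$, using that $K_b^{(m)}=T(\mathfrak{o}_F)$ fixes every alcove of $A_M$ (hence the two removed points) and Lemma \ref{reptriv} for the outer degrees. The paper's proof is identical in structure and in the identification of the key step.
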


\begin{proof} We prove (i). If $\ell(w) = \alpha$, then we get from Proposition \ref{propbdvar}:

$$H^r_c(\overline{X}_w(b), \cool) = H^r_c(\coprod_{J_b/K^{(0)}_b} \{pt\}, \cool) = \cIndd\nolimits_{K^{(0)}_b}^{J_b} (H^r_c(\{pt\}, \cool)).$$

\noindent The cohomology of a point in positive degrees vanishes and for $r = 0$ we have: $H^0_c(\{ pt \}, \cool) = 1_{\cool}$, 
as $K^{(m)}_b$-representation. The action of $\Gamma$ on the zero cohomology group with coefficients in $\cool$ is trivial. Hence (i) follows.

Consider now a $w \in W_a$ with $\ell(w) > \alpha$ such that $X_w(b) \neq \emptyset$, and let $m \in \{0,1\}$ be as in Proposition \ref{propbd}. 
Then from Proposition \ref{propbdvar} follows:
\begin{eqnarray}\label{cIndbd} H^r_c(\overline{X}_w(b), \cool) &=& H^r_c(\coprod_{J_b/K^{(m)}_b} \mathbb{A}^{\frac{\ell(w)-\alpha-1}{2}} \times 
(\mathbb{P}^1 - \{0,\infty\}), \cool) \\
\nonumber &=& \cIndd\nolimits^{J_b}_{K^{(m)}_b} (H^r_c(\mathbb{A}^{\frac{\ell(w)-\alpha-1}{2}} \times (\mathbb{P}^1 - \{0, \infty \}), \cool)).
\end{eqnarray}

\noindent Now (\ref{compdual}) implies:
\begin{equation}\label{neucINdbd} H^r_c(\mathbb{A}^{\frac{\ell(w)-\alpha-1}{2}} \times (\mathbb{P}^1 - \{0, \infty \}), \cool) = 
H^{r - \ell(w) + \alpha + 1}_c (\mathbb{P}^1 - \{0, \infty \}, \cool(\frac{\ell(w) - \alpha - 1}{2})), \end{equation}
\noindent Hence it suffices to determine the action of $K^{(m)}_b$ and of $\Gamma$ on the cohomology of $\mathbb{P}^1 - \{0, \infty\}$. 
First of all $\mathbb{P}^1 - \{0, \infty\}$ is one-dimensional, hence all cohomology groups in degrees $r>2$ vanish.

Using (\ref{cohomP1}), we get the Mayer-Vietoris long exact cohomology sequence for the cohomology with compact supports arising from the decomposition 
$\mathbb{P}^1 = (\mathbb{P}^1 - \{0, \infty\}) \cup \{0 ,\infty \}$, where $\{0, \infty\}$ represents the two alcoves of $A_M$ having $P_m$ as a vertex:

\begin{equation}\nonumber \xymatrix{0 \ar[r] & H^0_c(\mathbb{P}^1 - \{0, \infty\}, \cool) \ar[r] & \cool \ar[r] & H^0_c(\{0, \infty\}, \cool) \\
\ar[r] & H^1_c(\mathbb{P}^1 - \{0, \infty\}, \cool) \ar[r] & 0 \ar[r] & 0 \\
\ar[r] & H^2_c(\mathbb{P}^1 - \{0, \infty\}, \cool) \ar[r] &\cool(-1) \ar[r] & 0.}
\end{equation}

\noindent Since $\mathbb{P}^1 - \{0, \infty\}$ and $\{0, \infty\}$ are both stable under $K^{(m)}_b$, the $\cool$-vector spaces in the above sequence 
are $K^{(m)}_b$-representations and the morphisms are $K^{(m)}_b$-equivariant. But $K^{(m)}_b = T(\mathfrak{o}_F)$ stabilizes every alcove in $A_M$, 
and thus acts trivially on $\{0, \infty\}$. Hence the group $H^0_c(\{0, \infty\}, \cool)$ is isomorphic to $1_{\cool} \oplus 1_{\cool}$ 
as $K^{(m)}_b$-representation. The map on the right in the first line is the diagonal embedding $1_{\cool} \hookrightarrow 1_{\cool} \oplus 1_{\cool}$. 
Thus 
\begin{eqnarray*}\nonumber H^0_c(\mathbb{P}^1 - \{0, \infty\}, \cool) &=& 0, \quad \text{ and } \\
H^1_c(\mathbb{P}^1- \{0, \infty\}, \cool) &=& 1_{\cool},\end{eqnarray*}
as $K^{(m)}_b$-representations. The latter is trivial as a $\Gamma$-representation. Further, 
$$H^2_c(\mathbb{P}^1 - \{0, \infty\}, \cool) = H_c^2(\mathbb{P}^1, \cool) = 1_{\cool}(-1)$$
is trivial as $K^{(m)}_b$-representation by Lemma \ref{reptriv}. From (\ref{twistegal}) we have:
$$H^r_c(\mathbb{P}^1 - \{0, \infty\}, \cool(\frac{\ell(w)-\alpha-1}{2})) = \begin{cases} 1_{\cool}(\frac{\ell(w)-\alpha-1}{2}) & \text{ if } r = 1, \\
1_{\cool}(\frac{\ell(w)-\alpha-3}{2}) & \text{ if } r = 2, \\ 0 & \text{ else}.
\end{cases}$$
From this and (\ref{cIndbd}),(\ref{neucINdbd}) follows: 
\begin{eqnarray*} H^{\ell(w)-\alpha}_c(\overline{X}_w(b),\cool) &=& \cIndd\nolimits_{K^{(m)}_b}^{J_b} 1_{\cool}(\frac{\ell(w)-\alpha-1}{2}), \quad \text{ and } \\ 
 H^{\ell(w)-\alpha+1}_c(\overline{X}_w(b),\cool) &=& \cIndd\nolimits_{K^{(m)}_b}^{J_b} 1_{\cool}(\frac{\ell(w)-\alpha-3}{2}), \end{eqnarray*}
and $H^r_c(\overline{X}_w(b),\cool) = 0$ if $r \neq \ell(w)-\alpha, \ell(w)-\alpha + 1$.
\end{proof}
In particular, these representations are smooth, since they are compactly induced from the trivial representation of an open subgroup. 
We have the exact sequence  of abelian groups: 
\begin{equation}\nonumber \xymatrix{0 \ar[r] & T(\mathfrak{o}_F) \ar[r] & T(F) \ar[r] & \mathbb{Z}^2 \ar[r] & 0.} \end{equation}
Let $R\mathbb{Z}^2$ denote the regular representation of $\mathbb{Z}^2$. This means $\{e_{a,b} \colon a,b \in \mathbb{Z} \}$ is a $\cool$-basis of $R\mathbb{Z}^2$, and 
$\mathbb{Z}^2$ operates by translation on these basis vectors: $(c,d).e_{a,b} = e_{a+c,b+d}$.

We assume all occuring representations and characters of $T(F)$ to be smooth.
Further, since $T(F)$ is abelian, every irreducible representation of $T(F)$ is a character.

\begin{thm}\label{carbn} Let $w \in W_a$ such that $X_w(b) \neq \emptyset$. If $r$ is such that $H^r_c (\overline{X}_w(b), \cool) \neq 0$ (i.e. $r = 0$ if $\ell(w) = \alpha$, and 
$r = \ell(w) - \alpha, \ell(w) - \alpha + 1$ if $\ell(w) > \alpha$), then we have:
$$H^r_c (\overline{X}_w(b), \cool) \cong \inff\nolimits_{\mathbb{Z}^2}^{T(F)} R\mathbb{Z}^2,$$
 as  $T(F)$-representations. Let now $\chi$ be a character of $T(F)$.
\begin{itemize}
\item[(i)] If $\ell(w) = \alpha$, then 
$$\Homm\nolimits_{T(F)} (H^0_c (\overline{X}_w(b), \cool), \chi) = \begin{cases} \cool & \text{ if } \chi \text{ unramified, } \\ 0 & \text{ else.} \end{cases}$$
\item[(ii)] If $\ell(w) > \alpha$, then
\begin{eqnarray*}\nonumber \Homm\nolimits_{T(F)} (H^{\ell(w) - \alpha}_c (\overline{X}_w(b), \cool), \chi) &=& \begin{cases} \cool(\frac{\alpha + 1 - \ell(w)}{2}) & \text{ if } \chi \text{ unramified, } \\ 0 & \text{ else.} \end{cases} \\
\nonumber\Homm\nolimits_{T(F)} (H^{\ell(w) - \alpha+1}_c (\overline{X}_w(b), \cool), \chi) &=& \begin{cases} \cool(\frac{\alpha + 3 - \ell(w)}{2}) & \text{ if } \chi \text{ unramified, } \\ 0 & \text{ else.} \end{cases}\end{eqnarray*}
\end{itemize}
\end{thm}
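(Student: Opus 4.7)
The plan is to deduce the theorem directly from Proposition \ref{cohobdiag}, using the abelianness of $J_b = T(F)$ together with Frobenius reciprocity. The starting point is the fact that for every $r$ with $H^r_c(\overline{X}_w(b),\cool) \neq 0$, Proposition \ref{cohobdiag} gives a $J_b \times \Gamma$-equivariant isomorphism of the shape $\cIndd^{T(F)}_{T(\mathfrak{o}_F)} 1_{\cool}(n)$ for an explicit $n$ depending on $r$ and $\ell(w)$.

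First I would identify $\cIndd^{T(F)}_{T(\mathfrak{o}_F)} 1_{\cool}$ with $\inff_{\mathbb{Z}^2}^{T(F)} R\mathbb{Z}^2$. Since $T(F)$ is abelian, $T(\mathfrak{o}_F)$ is normal and the compactly induced representation consists of the finitely supported functions on the coset space $T(F)/T(\mathfrak{o}_F) = \mathbb{Z}^2$, with $T(F)$ acting by translation through the quotient map. This is tautologically the inflation of the regular representation $R\mathbb{Z}^2$, and the identification is manifestly $T(F)$-equivariant; the Galois twist plays no role here because the statement for $T(F)$-modules ignores it. This proves the first assertion for each relevant $r$.

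Next I would compute the Hom spaces. Fix a character $\chi$ of $T(F)$. By Frobenius reciprocity (\cite{BH} 2.4--5), applied to the open subgroup $T(\mathfrak{o}_F) \subset T(F)$,
\[
\Homm\nolimits_{T(F)}\bigl(\cIndd\nolimits^{T(F)}_{T(\mathfrak{o}_F)} 1_{\cool}, \chi\bigr) \;=\; \Homm\nolimits_{T(\mathfrak{o}_F)}\bigl(1_{\cool}, \chi|_{T(\mathfrak{o}_F)}\bigr),
\]
and the right-hand side is $\cool$ if $\chi|_{T(\mathfrak{o}_F)}$ is trivial (i.e.\ $\chi$ is unramified) and $0$ otherwise. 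Combining this with the explicit Tate twists from Proposition \ref{cohobdiag}, and recalling that the spaces of $T(F)$-homomorphisms acquire the opposite Galois twist (so $\sigma$ acts on $\Homm_G(H^r_c,\chi)$ by multiplication with $q^{-n}$ when it acts on $H^r_c$ by $q^n$), yields the claimed twists $\cool(\tfrac{\alpha+1-\ell(w)}{2})$ and $\cool(\tfrac{\alpha+3-\ell(w)}{2})$ in cases (i) and (ii).

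There is essentially no obstacle: every ingredient is already in place. The only point requiring mild care is the bookkeeping of the Galois twist on Hom spaces (contragredient convention) and making sure that the two non-zero cohomological degrees in Proposition \ref{cohobdiag}(ii) receive the correct shift. Once the identification of $\cIndd^{T(F)}_{T(\mathfrak{o}_F)} 1_{\cool}$ with $\inff_{\mathbb{Z}^2}^{T(F)} R\mathbb{Z}^2$ is noted, all three statements of the theorem follow in one line each from Frobenius reciprocity.
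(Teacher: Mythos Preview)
Your proposal is correct and follows essentially the same route as the paper: reduce to Proposition \ref{cohobdiag}, identify $\cIndd^{T(F)}_{T(\mathfrak{o}_F)} 1_{\cool}$ with the inflation of the regular representation of $\mathbb{Z}^2$ via the abelianness of $T(F)$, and then read off the Hom spaces by Frobenius reciprocity with the appropriate Tate twists. The paper makes the identification with $R\mathbb{Z}^2$ slightly more explicit by writing down the basis functions $e_{a,b}$ and invoking Lemma \ref{normalind}, but your ``tautological'' observation is the same content.
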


\begin{proof} All statements about the $\Gamma$-action follow from Proposition \ref{cohobdiag} similarly  as in Theorem \ref{thmb1}.
Further, all non-zero cohomology groups are isomorphic to $\cIndd\nolimits^{T(F)}_{T(\mathfrak{o}_F)} 1_{\cool}$ as $T(F)$-representations. 
For any character $\chi$ of $T(F)$, Frobenius reciprocity gives:
\begin{equation}\nonumber \Homm\nolimits_{T(F)} (\cIndd\nolimits^{T(F)}_{T(\mathfrak{o}_F)} 1_{\cool}, \chi) = \Homm\nolimits_{T(\mathfrak{o}_F)} (1_{\cool}, \chi) = 
\begin{cases} \cool & \text{ if } \chi \text{ unramified, } \\ 0 & \text{ else.} \end{cases}
\end{equation}
It remains to show that $\cIndd\nolimits^{T(F)}_{T(\mathfrak{o}_F)} 1_{\cool} \cong \inff\nolimits_{\mathbb{Z}^2}^{T(F)} R\mathbb{Z}^2$. Since $T(F)$ is abelian, 
Lemma \ref{normalind} below shows that $T(\mathfrak{o}_F)$ operates trivially on $\cIndd\nolimits^{T(F)}_{T(\mathfrak{o}_F)} 1_{\cool}$. Hence 
$\cIndd\nolimits^{T(F)}_{T(\mathfrak{o}_F)} 1_{\cool}$ is equal to an inflation of a representation from $T(F)/T(\mathfrak{o}_F) = \mathbb{Z}^2$.
Now the matrices $t^{(a,b)} = \begin{pmatrix} t^a & 0 \\ 0 & t^b\end{pmatrix}$ with $a,b \in \mathbb{Z}$ represent the left cosets in $T(F)/T(\mathfrak{o}_F)$ 
and by \cite{BH} 2.5 Lemma a basis of $\cIndd\nolimits^{T(F)}_{T(\mathfrak{o}_F)} 1_{\cool}$ is given by the functions
$$e_{a,b}: T(F) \rightarrow \cool, \quad e_{a,b}(x) = \begin{cases} 1 & \text{ if } x \in t^{(-a,-b)}T(\mathfrak{o}_F), \\ 0 &  \text{else. } \end{cases}$$

\noindent For any $x \in T(\mathfrak{o}_F)$ and $c,d \in \mathbb{Z}$, the element $t^{(c,d)}x$ operates on $e_{a,b}$ by translation:
$(t^{(c,d)}x.e_{a,b})(y) = e_{a,b}(t^{(c,d)}xy) = e_{a+c,b+d}(y)$ for any $y \in T(F)$. From this the result follows.
\end{proof}

\begin{Lm}\label{normalind} Let $K \subseteq H$ be a normal subgroup of a locally profinite group. Then $\Indd_K^H 1_{\cool}$ is trivial 
as $K$-representation.
\end{Lm}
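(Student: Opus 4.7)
The plan is to unwind the definitions and use normality of $K$ in $H$ in the obvious way. Recall that $\Indd_K^H 1_{\cool}$ consists of (locally constant) functions $f \colon H \to \cool$ satisfying $f(k'h) = 1_{\cool}(k')f(h) = f(h)$ for all $k' \in K$ and $h \in H$, and that the $H$-action is given by right translation: $(h.f)(x) = f(xh)$.

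So to show that an element $k \in K$ acts trivially on such an $f$, I would simply compute $(k.f)(x) = f(xk)$ for arbitrary $x \in H$. Since $K$ is normal in $H$, the element $xkx^{-1}$ lies in $K$, and we can rewrite $xk = (xkx^{-1}) x$. Applying the defining property $f(k'h) = f(h)$ with $k' = xkx^{-1} \in K$ and $h = x$, we obtain $f(xk) = f((xkx^{-1})x) = f(x)$. Therefore $k.f = f$, which proves that $K$ acts trivially on $\Indd_K^H 1_{\cool}$.

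There is no real obstacle here; the entire content is the identity $xk = (xkx^{-1})x$, which is exactly what normality of $K$ in $H$ provides. The same argument works verbatim for $\cIndd_K^H 1_{\cool}$, which is the case actually used in the proof of Theorem \ref{carbn}.
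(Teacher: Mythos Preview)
Your proof is correct and is essentially identical to the paper's: both compute $(k.f)(x)=f(xk)$, rewrite $xk=k'x$ with $k'=xkx^{-1}\in K$ by normality, and apply the left $K$-invariance of $f$ to conclude $f(xk)=f(x)$. The only difference is that you make $k'$ explicit, whereas the paper just writes ``for some $k'\in K$''.
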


\begin{proof} Let $k \in K$ and $f: H \rightarrow \cool$ in $\Indd_K^H 1_{\cool}$. Then we have for all $x \in H$: $(kf)(x) = f(xk) = f(k^{\prime}x) = 
k^{\prime}f(x) = f(x)$ for some $k^{\prime} \in K$. 
\end{proof}


\subsection{The third case: cohomology of $\overline{X}_w(b_1)$}  \mbox{}

\begin{Prop}\label{cohobnf}
Let $w \in W_a$ be such that $X_w(b_1) \neq \emptyset$  (compare Proposition \ref{propbn}). 
Then we have the following isomorphisms of $J_{b_1} \times \Gamma$-modules:
$$H^r_c( \overline{X}_w(b_1), \cool) = \begin{cases} \cIndd_{H_{b_1}}^{J_{b_1}} 1_{\cool}(\frac{\ell(w)}{2}) & \text{if } r = \ell(w),\\ 0 & \text{else}.\end{cases}$$
\end{Prop}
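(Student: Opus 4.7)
The plan is to argue directly from Proposition \ref{PropbnVar} together with the standard compactly supported cohomology of affine space. Extending scalars to $\bar{k}$ in Proposition \ref{PropbnVar} gives a $J_{b_1}$-equivariant decomposition
$$\overline{X}_w(b_1) \cong \coprod_{J_{b_1}/H_{b_1}} \mathbb{A}^{\ell(w)/2},$$
with $J_{b_1}$ permuting the components by left multiplication on the index set and $H_{b_1}$ stabilising the component indexed by the trivial coset. Since compactly supported $\ell$-adic cohomology commutes with disjoint unions, I would obtain, as a $J_{b_1} \times \Gamma$-module,
$$H^r_c(\overline{X}_w(b_1),\cool) \cong \cIndd\nolimits_{H_{b_1}}^{J_{b_1}} H^r_c(\mathbb{A}^{\ell(w)/2},\cool).$$

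The exponent $\ell(w)/2$ is an integer by Proposition \ref{propbn}. Applying the formula (\ref{compdual}) with $Y = \Specc\bar{k}$ gives
$$H^r_c(\mathbb{A}^n, \cool) = \begin{cases} 1_{\cool}(n) & \text{if } r = 2n,\\ 0 & \text{else,} \end{cases}$$
as $\Gamma$-modules. Specialising to $n = \ell(w)/2$ yields the desired vanishing outside degree $\ell(w)$, together with the twist $1_{\cool}(\ell(w)/2)$ in degree $\ell(w)$.

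The remaining step is to verify that $H_{b_1}$ acts trivially on the one-dimensional space $H^{\ell(w)}_c(\mathbb{A}^{\ell(w)/2},\cool)$. By Lemma \ref{JKLm} the group $H_{b_1}$ coincides with $K_{b_1}^{(m)}$, which stabilises the vertex $P_m$ and therefore acts on the single Schubert component $D^{\ell(w)/2}_{\overline{C^0_M}}(P_m) \cong \mathbb{A}^{\ell(w)/2}$. I would deduce triviality by mimicking Lemma \ref{reptriv}: extract $H^{\ell(w)}_c(\mathbb{A}^{\ell(w)/2},\cool)$ from the long exact sequence of the open/closed decomposition $\mathbb{P}^{\ell(w)/2} = \mathbb{A}^{\ell(w)/2} \sqcup \mathbb{P}^{\ell(w)/2-1}$ (using the explicit coordinates of Corollary \ref{lmdp}(iii)) and reduce to the triviality of the $H_{b_1}$-action on $H^{\ast}_c(\mathbb{P}^{\ell(w)/2},\cool)$ and on $H^{\ast}_c(\mathbb{P}^{\ell(w)/2-1},\cool)$, which follows from triviality of the action on the constant sheaf together with Poincar\'e duality, exactly as in the proof of Lemma \ref{reptriv}. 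Assembling the three steps and using that twisting commutes with compact induction as noted at the start of this section then delivers the stated isomorphism. The main obstacle is this final triviality statement, since the $H_{b_1}$-action does not manifestly extend to the chosen compactification; but it is formally parallel to Lemma \ref{reptriv}, and everything else is routine.
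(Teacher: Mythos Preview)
Your argument is correct and coincides with the paper's proof through the first two displayed steps: decompose via Proposition \ref{PropbnVar}, pass to compact induction, and apply (\ref{compdual}) with $Y=\Specc\bar{k}$. The paper stops there, taking for granted that $H_{b_1}$ acts trivially on the one-dimensional space $H^{\ell(w)}_c(\mathbb{A}^{\ell(w)/2},\cool)$.

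Your additional step addressing this triviality is a reasonable concern, but the route you propose (compactify to $\mathbb{P}^{\ell(w)/2}$ and run a long exact sequence) is both unnecessary and, as you yourself note, problematic: there is no reason the $H_{b_1}$-action on $D^{\ell(w)/2}_{\overline{C^0_M}}(P_m)$ extends to that particular compactification. The clean argument is exactly the content of the proof of Lemma \ref{reptriv}, applied directly to the smooth variety $\mathbb{A}^{\ell(w)/2}$ without any compactification: Poincar\'e duality identifies $H^{\ell(w)}_c(\mathbb{A}^{\ell(w)/2},\cool)$ with the dual of $H^0(\mathbb{A}^{\ell(w)/2},\cool)$ up to a Tate twist, and the group action on $H^0$ of a connected variety with constant coefficients is trivial. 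So drop the compactification detour and invoke Poincar\'e duality on $\mathbb{A}^{\ell(w)/2}$ itself; then your proof is complete and matches the paper's.
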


\begin{proof}
From Proposition \ref{PropbnVar} we have:

$$H^r_c( \overline{X}_w(b_1), \cool) = H^r_c( \coprod_{J_{b_1}/H_{b_1}} \mathbb{A}^{\frac{\ell(w)}{2}}, \cool) = 
\cIndd\nolimits_{H_{b_1}}^{J_{b_1}} H^r_c(\mathbb{A}^{\frac{\ell(w)}{2}}, \cool).$$

\noindent Now (\ref{compdual}) implies: $$H^r_c(\mathbb{A}^{\frac{\ell(w)}{2}}, \cool) = H^{r-\ell(w)}_c(\{pt\}, \cool(\frac{\ell(w)}{2})) = 
\begin{cases} 1_{\cool}(\frac{\ell(w)}{2}) & \text{ if } r = \ell(w), \\ 0 & \text{ else. } \end{cases}$$

\end{proof}

In particular all these representations are smooth, since they are compactly induced from an open subgroup (or zero).

\subsection{The representations $H^{\ell(w)}_c( \overline{X}_w(b_1), \cool)$}

At first, we recall  briefly some facts about the multiplicative group $J_{b_1}$ of the quaternion algebra over $F$. 
For all facts presented here we refer to \cite{BH} 53, 54. 
Recall that $E$ denotes the unramified extension of $F$ of degree two contained in $L$. Let 
$$D = \left\{  \begin{pmatrix} a & \sigma(c) \\ tc & \sigma(a) \end{pmatrix} \colon a, c \in E \right\}$$
be the corresponding quaternion algebra. Thus we have $J_{b_1} = D^{\times} = D \backslash \{0\}$.
The reduced norm on $D$ is given by the determinant:
$$\Nrd = \dett: D \rightarrow F, \quad \begin{pmatrix} a & \sigma(c) \\ tc & \sigma(a) \end{pmatrix} \mapsto a\sigma(a) - tc\sigma(c).$$
Its restriction to $D^{\times}$ gives a surjective homomorphism $\dett: D^{\times} \rightarrow F^{\times}$.
The map $$v_D : x \mapsto v_D (x) := v_L(\dett(x))$$
defines a discrete valuation on $D$ and we have the corresponding valuation ring and the group of units in it:
$$\mathfrak{O} = \{x \in D \colon v_D(x) \geq 0 \} \quad \text{ and } \quad U_D = \mathfrak{O}^{\times} = \{x \in D \colon v_D(x) = 0 \}.$$
The group $U_D$ is normal, compact and open subgroup of $D^{\times}$. By definition, we have: 
$$H_{b_1} = U_D.$$

If we speak of a representation of $D^{\times}$ or $F^{\times}$, we mean a smooth representation. 
Let $\chi$ be a character of $F^{\times}$. Then $\chi_D := \chi \circ \dett$ is a character of $D^{\times}$. If $\chi$ ranges over all characters of 
$F^{\times}$, then $\chi_D$ ranges over all characters of $D^{\times}$ (\cite{BH} 53.5). The characters $\chi_D$ are exactly the 
one-dimensional representations of $D^{\times}$. Further, if $\pi$ is an irreducible representation of $D^{\times}$, then $\pi$ is finite-dimensional 
(\cite{BH} 54.1).

We have the projection $D^{\times} \twoheadrightarrow D^{\times}/U_D \cong \mathbb{Z}$. Let $R \mathbb{Z}$ denote the regular representation of $\mathbb{Z}$.

\begin{Lm}\label{level0} Let $w \in W_a$ such that $X_w(b_1) \neq \emptyset$. Then
$$H^{\ell(w)}_c (\overline{X}_w(b_1), \cool) \cong \inff\nolimits_{\mathbb{Z}}^{D^{\times}} R \mathbb{Z}$$
 as  $D^{\times}$-representations. 
Further if $\pi$ is an irreducible representation of $D^{\times}$ with \\
$$\Homm\nolimits_{D^{\times}} (H^{\ell(w)}_c (\overline{X}_w(b_1), \cool), \pi) \neq 0,$$ 
then $U_D \subseteq \Kerr(\pi)$. In particular $\pi$ is one-dimensional and unramified.
\end{Lm}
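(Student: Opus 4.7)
The starting point is Proposition \ref{cohobnf}, which, combined with $H_{b_1} = U_D$ (Lemma \ref{JKLm}) and $J_{b_1} = D^{\times}$, gives
$$H^{\ell(w)}_c(\overline{X}_w(b_1), \cool) \cong \cIndd\nolimits_{U_D}^{D^{\times}} 1_{\cool}\left(\tfrac{\ell(w)}{2}\right).$$
The Tate twist affects only the $\Gamma$-action and plays no role either in the isomorphism of $D^{\times}$-representations or in the nonvanishing of $\Homm_{D^{\times}}$, so I will ignore it in what follows.

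For the first claim, the key point is that $U_D$ is normal in $D^{\times}$ with quotient $D^{\times}/U_D \cong \mathbb{Z}$ (cut out by $v_D$). The proof of Lemma \ref{normalind} applies verbatim to compact induction, so $U_D$ acts trivially on $\cIndd_{U_D}^{D^{\times}} 1_{\cool}$ and this representation is inflated from the quotient $\mathbb{Z}$. Choosing $b_1$ as a uniformiser ($v_D(b_1)=1$), a $\cool$-basis is furnished by the characteristic functions $e_n$ of the cosets $b_1^{-n} U_D$, $n \in \mathbb{Z}$; using the convention $(gf)(x) = f(xg)$ and normality of $U_D$, a direct check shows that $g \in D^{\times}$ with $v_D(g) = m$ sends $e_n$ to $e_{n+m}$. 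This is precisely $R\mathbb{Z}$, proving $\cIndd_{U_D}^{D^{\times}} 1_{\cool} \cong \inff_{\mathbb{Z}}^{D^{\times}} R\mathbb{Z}$.

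For the second claim, let $\pi$ be an irreducible representation of $D^{\times}$ with nonzero Hom from $H^{\ell(w)}_c(\overline{X}_w(b_1),\cool)$. Frobenius reciprocity gives
$$\Homm\nolimits_{D^{\times}}\bigl(\cIndd\nolimits_{U_D}^{D^{\times}} 1_{\cool}, \pi\bigr) = \Homm\nolimits_{U_D}(1_{\cool}, \pi) = \pi^{U_D},$$
so $\pi^{U_D} \neq 0$. Since $\pi$ is finite-dimensional by \cite{BH} 54.1 and $U_D$ is normal in $D^{\times}$, the subspace $\pi^{U_D}$ is stable under $D^{\times}$; irreducibility of $\pi$ then forces $\pi^{U_D} = \pi$, i.e.\ $U_D \subseteq \Kerr(\pi)$. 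Hence $\pi$ factors through the abelian quotient $D^{\times}/U_D \cong \mathbb{Z}$ and is therefore one-dimensional, so $\pi = \chi_D = \chi \circ \dett$ for some character $\chi$ of $F^{\times}$. Finally, $\Nrd(U_D) = \mathfrak{o}_F^{\times}$ (a standard property of the reduced norm on a quaternion division algebra over a local field), so $\chi_D|_{U_D}$ is trivial iff $\chi$ is unramified.

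The entire argument is essentially bookkeeping; the only substantive inputs are the normality of $U_D$ in $D^{\times}$ (which both kills the $U_D$-action on the induced representation and forces $\pi^{U_D}$ to be $D^{\times}$-stable) and the surjectivity $\Nrd \colon U_D \twoheadrightarrow \mathfrak{o}_F^{\times}$ needed for the final unramifiedness statement.
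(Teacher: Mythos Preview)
Your proof is correct and follows essentially the same approach as the paper: normality of $U_D$ plus Lemma \ref{normalind} to identify the cohomology with the inflated regular representation of $\mathbb{Z}$, then a short argument showing $U_D \subseteq \Kerr(\pi)$. The only minor difference is that for the second claim the paper argues directly that any nonzero map to an irreducible $\pi$ is surjective (so $\pi$ inherits the trivial $U_D$-action from the source), whereas you go via Frobenius reciprocity and the $D^{\times}$-stability of $\pi^{U_D}$; both routes are equally valid and equally short.
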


\begin{proof}
Since $U_D$ is normal in $D^{\times}$, Lemma \ref{normalind} shows that $\cIndd\nolimits_{U_D}^{D^{\times}} 1_{\cool}$ is trivial as $U_D$-representation.
A similar computation as in the proof of Theorem \ref{carbn} shows that $H^{\ell(w)}_c (\overline{X}_w(b_1), \cool) \cong \inff\nolimits_{\mathbb{Z}}^{D^{\times}} 
R \mathbb{Z}$.

If $\alpha: \cIndd_{U_D}^{D^{\times}} 1_{\cool} \rightarrow \pi$ is some non-zero homomorphism, then $\alpha$ is surjective, since $\pi$ is irreducible.  
Hence $\pi$ must be trivial on $U_D$. Hence $\pi$ is the inflation of some irreducible representation of $\mathbb{Z}$. Since $\mathbb{Z}$ is abelian, this 
representation must be one-dimensional. Thus $\pi$ is one-dimensional.
\end{proof}
Summarizing the results, we get the following

\begin{thm}\label{posl} Let $w \in W_a$ such that $X_w(b_1) \neq \emptyset$. Then
$$H^{\ell(w)}_c (\overline{X}_w(b_1), \cool) \cong \inff\nolimits_{\mathbb{Z}}^{D^{\times}} R \mathbb{Z}$$
 as  $D^{\times}$-representations.

\begin{itemize}
\item[(i)] Let $\chi$ be a character of $F^{\times}$. Then 

\begin{equation}\nonumber \Homm\nolimits_{D^{\times}} (H^{\ell(w)}_c( \overline{X}_w(b_1), \cool), \chi_D) = 
\begin{cases} \cool(-\frac{\ell(w)}{2}) & \text{ if } \chi \text{ unramified, } \\ 0 & \text{ else.} \end{cases} \end{equation}
\item[(ii)] Let $\pi$ be an  irreducible representation of $D^{\times}$ of dimension $\geq 2$. Then \\
$$\Homm\nolimits_{D^{\times}} (H^{\frac{\ell(w)}{2}}_c( \overline{X}_w(b_1), \cool), \pi) = 0.$$
\end{itemize}
\end{thm}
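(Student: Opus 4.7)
The first assertion, identifying $H^{\ell(w)}_c(\overline{X}_w(b_1),\cool)$ with $\inff\nolimits_{\mathbb{Z}}^{D^\times} R\mathbb{Z}$ as a $D^\times$-representation, is precisely the content of Lemma \ref{level0} and requires nothing new; the Galois twist by $\cool(\ell(w)/2)$ recorded in Proposition \ref{cohobnf} will intervene when we pass to Hom-spaces. So the plan is to apply Frobenius reciprocity to the compactly induced description of the cohomology and then invoke Lemma \ref{level0} to kill everything of dimension $\geq 2$.

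For part (i), I would begin with Proposition \ref{cohobnf}, which gives $H^{\ell(w)}_c(\overline{X}_w(b_1),\cool) \cong \cIndd_{U_D}^{D^\times} 1_{\cool}(\tfrac{\ell(w)}{2})$ as a $D^\times \times \Gamma$-module. Since $U_D = H_{b_1}$ is open in $D^\times$, Frobenius reciprocity (ignoring the twist for a moment) gives
\[
\Homm\nolimits_{D^\times}\bigl(\cIndd\nolimits_{U_D}^{D^\times} 1_{\cool}, \chi_D\bigr) \cong \Homm\nolimits_{U_D}(1_{\cool}, \chi_D|_{U_D}),
\]
which is $\cool$ if $\chi_D|_{U_D}$ is trivial, and $0$ otherwise. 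Because $\dett(U_D) = \mathfrak{o}_F^\times$, triviality of $\chi_D|_{U_D} = \chi \circ \dett|_{U_D}$ is equivalent to $\chi$ being unramified, giving the stated case distinction. The Galois twist is then standard: the Frobenius $\sigma \in \Gamma$ acts on $H^{\ell(w)}_c$ by $q^{\ell(w)/2}$ and trivially on $\chi_D$, hence acts on the Hom-space by $q^{-\ell(w)/2}$, which is exactly the twist $\cool(-\ell(w)/2)$.

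For part (ii), I would apply Lemma \ref{level0} directly: any nonzero $\varphi \colon H^{\ell(w)}_c(\overline{X}_w(b_1),\cool) \to \pi$ with $\pi$ irreducible forces $\pi$ to be trivial on $U_D$ and hence to factor through $D^\times/U_D \cong \mathbb{Z}$. Since $\mathbb{Z}$ is abelian, every irreducible smooth $\mathbb{Z}$-representation is one-dimensional, so $\pi$ itself is one-dimensional, contradicting $\dim \pi \geq 2$. Thus the Hom-space vanishes.

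I do not anticipate a serious obstacle here, since both parts follow mechanically from results already established. The only subtlety is bookkeeping of the Galois twist in part (i), and making sure that the argument of Lemma \ref{level0} is quoted correctly in part (ii)---namely, that the conclusion "$U_D \subseteq \Kerr(\pi)$" applies whenever there is a nonzero $D^\times$-morphism from the cohomology into \emph{any} irreducible $\pi$, not merely a character.
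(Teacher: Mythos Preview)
Your proposal is correct and follows essentially the same route as the paper: the first assertion is Lemma \ref{level0}, part (i) is Frobenius reciprocity together with $\dett(U_D)=\mathfrak{o}_F^\times$ and the Galois-twist bookkeeping from Proposition \ref{cohobnf}, and part (ii) is an immediate consequence of Lemma \ref{level0}. The only difference is cosmetic---you spell out the $\Gamma$-action explicitly where the paper refers back to the analogous computation in Theorem \ref{thmb1}.
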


\begin{proof} The first statement was already proven in Lemma \ref{level0}. 
All statements about the $\Gamma$-action follow from Proposition \ref{cohobnf} as in Theorem \ref{thmb1}. To prove (i), notice that Frobenius reciprocity 
implies:

\begin{eqnarray*}  \Homm\nolimits_{D^{\times}} (H^{\ell(w)}_c( \overline{X}_w(b_1), \cool), \chi_D) &=& 
\Homm\nolimits_{D^{\times}} (\cIndd\nolimits^{D^{\times}}_{U_D} 1_{\cool}, \chi_D) = \Homm\nolimits_{U_D} ( 1_{\cool}, \chi_D) = \\ 
  &=& \begin{cases} \cool & \text{ if } \chi \circ \dett|_{U_D} = 1_{\cool}, \\ 0 & \text{ else.} \end{cases} \end{eqnarray*}

\noindent But since $\dett$ is surjective and $U_D = \dett^{-1}(\mathfrak{o}_F^{\times})$, we have $\dett(U_D) = \mathfrak{o}_F^{\times}$. This implies:
$\chi \circ \det|_{U_D} = 1_{\cool}$ if and only if $\chi$ is unramified.

(ii) follows directly from Lemma \ref{level0}.
\end{proof}


\renewcommand{\refname}{References}


\begin{thebibliography}{10}

\bibitem[BH]{BH} Bushnell, C.J. and Henniart G.: Local Langlands conjecture for $GL_2$, Springer, 2006.

\bibitem[BL]{BL} Beauville, A. and Laszlo Y.: Conformal blocks and generalized theta functions, Commun. Math. Phys. 164 (1994), 385-419.

\bibitem[Bo]{Bo} Borel, A.: Linear algebraic groups, Second enlarged edition, 1991.

\bibitem[Br]{Br} Brown, K.: Buildings, Springer, 1988.

\bibitem[PR]{PR} Pappas, G. and Rapoport M.: Twisted loop groups and their affine flag varieties, Advances in Mathematics 219 (2008) 118-198. 

\bibitem[Ra]{Ra} Rapoport M.: A positivity property of the Satake isomorphism, Mauscripta math. 101 (2000), 153-166. 

\bibitem[RR]{RR} Rapoport, M. and Richartz, M.: On the classification and specialization of $F$-isocrystals with additional structure, Comp. Math., 
103 (1996) 153-181.

\bibitem[RZ]{RZ} Rapoport M. and Zink Th.: Period spaces for $p$-divisible groups, Ann. of Math. Studies 141, Princeton U. Press, 1996.

\bibitem[Re]{Re} Reuman, D.: Determining whether certain affine Deligne-Lusztig sets are empty, Dissertation, arXiv:math/0211434v1.

\bibitem[We]{We} Weintraub, S.H.: Representation theory of finite groups, Graduate Studies in Math., vol.59, AMS 2003.

\bibitem[Zb]{Zb} Zbarsky, B.: On some stratifications of affine Deligne-Lusztig varieties for $SL_3$, Dissertation, arXiv:0906.0186v1.

\end{thebibliography}
\end{document}